\theoremstyle{plain}
\newtheorem{theorem}{Theorem}[section]
\newtheorem{proposition}[theorem]{Proposition}
\newtheorem{lemma}[theorem]{Lemma}
\newtheorem{corollary}[theorem]{Corollary}
\theoremstyle{definition}
\theoremstyle{remark}
\newtheorem{remark}[theorem]{Remark}
\newtheorem{example}[theorem]{Example}
\newcommand{\Q}{\mathbb Q}
\newcommand{\Z}{\mathbb Z}
\newcommand{\R}{\mathbb R}
\newcommand{\C}{\mathbb C}
\DeclareMathOperator{\Spec}{Spec}
\DeclareMathOperator{\Sp}{Sp}
\DeclareMathOperator{\End}{End}
\DeclareMathOperator{\Hom}{Hom}
\DeclareMathOperator{\Gal}{Gal}
\DeclareMathOperator{\GL}{GL}
\DeclareMathOperator{\PGL}{PGL}
\DeclareMathOperator{\SL}{SL}
\DeclareMathOperator{\M}{M}
\DeclareMathOperator{\CH}{CH}
\DeclareMathOperator{\Sym}{Sym}
\DeclareMathOperator{\SFD}{SFD}
\DeclareMathOperator{\Spf}{Spf}
\DeclareMathOperator{\Lie}{Lie}
\DeclareMathOperator{\AJ}{AJ}
\newcommand{\tr}{\mathrm{tr}}
\newcommand{\ord}{\mathrm{ord}}
\newcommand{\norm}{\mathrm{nr}}
\newcommand{\unr}{\mathrm{unr}}
\newcommand{\rig}{\mathrm{rig}}
\newcommand{\univ}{\mathrm{univ}}
\newcommand{\Hp}{\mathcal{H}_p}
\newcommand{\hatHp}{\hat{\mathcal{H}}_p}
\newcommand{\Nilp}{\mathrm{Nilp}}
\newcommand{\proNilp}{\mathrm{pro\text{-}Nilp}}
\newcommand{\can}{\mathrm{can}}
\newcommand{\Liecal}{\mathcal{L}ie}
\newcommand{\HH}{\mathcal{H}}
\newcommand{\cl}{\mathrm{cl}}
\newfont{\cyr}{wncyr10 scaled 1100}
\newfont{\cyrr}{wncyr9 scaled 1000}
\definecolor{Indigo}{rgb}{0.2,0.1,0.7}
\definecolor{Violet}{rgb}{0.5,0.1,0.7}
\definecolor{White}{rgb}{1,1,1}
\definecolor{Green}{rgb}{0.1,0.9,0.2}
\newcommand{\longmono}{\mbox{\;$\lhook\joinrel\longrightarrow$\;}}
\newcommand{\longepi}{\mbox{\;$\relbar\joinrel\twoheadrightarrow$\;}}
\newcommand{\smallmat}[4]{\bigl(\begin{smallmatrix}#1&#2\\#3&#4\end{smallmatrix}\bigr)}
\begin{document}

\title[Shimura-Mass operator]{A $p$-adic Shimura-Maass operator on Mumford curves}
\today
\author{Matteo Longo}

\thanks{}

\begin{abstract} We study a $p$-adic Shimura-Maass operator in the context of Mumford curves 
defined by C. Franc in \cite{Franc}. We prove that this operator arises from a splitting of the Hodge filtration, 
thus answering a question in \cite{Franc}. We also study the relation of this operator with 
generalized Heegner cycles, in the spirit of \cite{BDP}, \cite{Brooks}, \cite{Kritz} and \cite{AI}. 
\end{abstract}

\address{Dipartimento di Matematica, Universit\`a di Padova, Via Trieste 63, 35121 Padova, Italy}
\email{mlongo@math.unipd.it}

\subjclass[2010]{}

\keywords{}

\maketitle

\tableofcontents

\section{Introduction}

The main purpose of this paper is to study in the context of Mumford curves a $p$-adic variant of the Shimura-Maass operator, and relate it to generalized Heegner cycles. 

The real analytic Shimura-Maass operator is defined by the formula 
\begin{equation}\label{intro SM}
\delta_k\left(f(z)\right)=\frac{1}{2\pi i}\left(\frac{\partial}{\partial z}+\frac{k}{z-\bar{z}}\right)f(z)\end{equation}
where $z$ is a variable in the complex upper half plane $\mathcal{H}$, $f(z)$ is a real analytic modular form of weight $k$, and $z\mapsto \bar{z}$ denotes the complex conjugation; here $\delta_k\left(f(z)\right)$ is a real analytic modular form of weight $k+2$. 
The relevance of this operator arises in studyin algebraicity properties of Eisenstein series and $L$-functions: see Shimura \cite{Shimura-ArithProp}, Hida \cite[Chapter 10]{HidaElementary}. One of the main results in \cite{Shimura-ArithProp} is the following. Let 
\[\delta_k^r=\delta_{k+2(r-1)}\circ\delta_{k+2(r-2)}\circ\dots\circ\delta_k\] for any $r\geq 1$, and let $K$ be an imaginary quadratic field. Then there exists $\Omega_K\in\C^\times$ such that for every CM point $z\in K\cap\mathcal{H}$, every congruence subgroup $\Gamma\subseteq\SL_2(\Z)$, every integer $k\geq 0$, $r\geq 1$, and every modular form of weigh $k$ and level $\Gamma$ with algebraic Fourier coefficients we have 
\begin{equation}\label{intro rationality}
\frac{\delta_k^r(f)(z)}{\Omega_K^{(k+2r}}\in\bar{\Q}.\end{equation}

Katz described in \cite{KatzIHES} the Shimura-Maass operator in more abstract terms by means of the Gauss-Manin connection (see also \cite{KO}).  
More precisely. let $N\geq1$ be an integer, 
$X_1(N)$ the modular form of level $\Gamma_1(N)$ over $\Q$, and let $\pi:\mathcal{E}\rightarrow X_1(N)$ be the universal elliptic curve. Consider the relative de Rham cohomology sheaf \[\mathcal{L}_1=\R^1\pi_*\left(0\rightarrow\mathcal{O}_\mathcal{E}\rightarrow\Omega^1_{\mathcal{E}/X_1(N)}\right)\] on $X_1(N)$, and define $\mathcal{L}_r=\mathrm{Sym}^r(\mathcal{L}_1)$. 
Let $\underline\omega=\pi_*\left(\Omega^1_{\mathcal{E}/X_1(N)}\right)$. 
The sheaf $\underline{\omega}$ is invertible and we have 
the Hodge filtration 
\begin{equation}\label{intro Hodge}
0\rightarrow\underline\omega\rightarrow\mathcal{L}_1\rightarrow\underline{\omega}^{-1}\rightarrow 0.\end{equation} Once we are given a splitting $\Psi$ of the Hodge filtration \eqref{intro Hodge}, one may define by means of the Gauss-Manin connection and the Kodaira-Spencer map an operator 
$\Theta^{(r)}_\Psi\colon\underline{\omega}^r\rightarrow\underline\omega^{r+2}$, for any even integer $r\geq 2$.  
In particular, considering the associated real analytic sheaves, which we denote by a superscript $^\mathrm{ra}$, the Hodge exact sequence admits a splitting 
\begin{equation}\label{intro real analytic splitting}
\Psi_\infty:\mathcal{L}_1^\mathrm{ra}\simeq\underline{\omega}_\mathrm{ra}\oplus\underline{\bar{\omega}}_\mathrm{ra},\end{equation} 
where $\underline{\bar{\omega}}_\mathrm{ra}$ is obtained from $\underline{\omega}$ by applying the complex conjugation.
The Shimura-Maass operator can then be described as the map $\Theta^{(r)}_{\Psi_\infty}$ 
on real analytic differentials by appealing to the general procedure alluded to above applied to the real analytic splitting \eqref{intro real analytic splitting}.  For details on this construction, the reader is referred to 
\cite[\S1.8]{KatzCM} and \cite[\S 1.2]{BDP}; for the case of Siegel modular forms, see  \cite[\S4]{Harris} while for the case of Shimura curves see \cite[\S 3]{Brooks}, \cite[\S 2]{Mori}.


As hinted from the above discussion, Katz description of the Shimura-Maass operator rests on the fact that the real analytic Hodge sequences \eqref{intro Hodge} splits. In \cite[\S 1.11]{KatzCM}, Katz introduces a $p$-adic analogue of this splitting. Suppose that $p\nmid N$ is a prime number, and let $X_1^\ord(N)$ denote the ordinary locus of the modular curve, viewed as a rigid analytic scheme over $\Q_p$. Let $\mathcal{F}^\rig$ be the rigid analytic sheaf associated with a sheaf $\mathcal{F}$ on $X_1(N)$. 
Then $\mathcal{L}_1^\rig$ splits over $X_1^\ord(N)$ as the direct sum  
\[\Psi_p:\mathcal{L}_1^\rig\simeq\underline\omega^\rig\oplus\mathcal{L}_1^\mathrm{Frob}\] 
(where $\mathcal{L}_1^\mathrm{Frob}$ has the property that the Frobenius endomorphism acts on this sheaf invertibly). This allows to define a differential operator $\Theta_{\Psi_p}^{(r)}$, which can be seen as a $p$-adic analogue of the Shimura-Maass operator; this operator can be also described in terms of Atkin-Serre derivative. At CM points the splittings $\Psi_\infty$ and $\Psi_p$ coincide, and therefore one deduces by comparison rationality results for the values of $\Theta_{\Psi_p}^{(r)}$ at CM points from {intro rationality}. For details, see \cite[Proposition 1,12]{BDP}. The $p$-adic Shimura-Maass operator is then used in \cite{KatzCM} and \cite{BDP} to construct $p$-adic $L$-functions and study their properties.   

We now fix an integer $N$, a prime $p\nmid N$, and a quadratic imaginary field in which $p$ is inert. In this context, Kritz introduced in \cite{Kritz} for modular forms of level $N$ a new $p$-adic Shimura-Maass operator by using perfectoid techniques, and define $p$-adic $L$-functions by means of this operator, thus removing the crucial assumption that $p$ is split in $K$, but keeping the assumption that $p$ is a prime of good reduction for the modular curve. Moreover, Andreatta-Iovita \cite{AI} introduced still an other $p$-adic Shimura-Maass operator in \cite{AI}, and obtained results analogue to \cite{BDP}, thus extending their 
work to the non-split case. 

On the other hand, Franc in his thesis \cite{Franc} proposed still an other $p$-adic Shimura-Maass operator for primes $p$ which are inert in $K$, in the following context. Let $N\geq1$ be an integer, $K/\Q$ a quadratic imaginary field, $p\nmid N$ a prime number which is inert in $K$, and let $Np=N^+\cdot N^-p$ be a factorization of $Np$ into coprime integers such that $N^+$ is divisible only by primes which are split in $K$, and $N^-p$ is a square-free product of an even number of primes factors which are inert in $K$. Let $\mathcal{B}$ be the indefinite quaternion of discriminant $N^-$, $\mathcal{R}$ an Eichler order of $X$ of level $N^+$, and $\mathcal{C}$ the Shimura curve attached to $(\mathcal{B},\mathcal{R})$. The rigid analytic curve $X^\rig$ over $\Q_p$ is then a Mumford curve, namely $X^\ord(\C_p)$ is isomorphic to the rigid analytic quotient of the $p$-adic upper half plane $\mathcal{H}_p(\Q_p)=\C_p-\Q_p$ by an arithmetic subgroup $\Gamma\subseteq\SL_2(\Q_p)$.
Franc defines in this context a $p$-adic Shimura-Maass operator $\delta_{p,k}$ 
by mimicking the definition \eqref{intro SM} and formally replacing the variable $z\in\mathcal{H}$ with the $p$-adic variable $z\in\mathcal{H}_p(\hat{\Q}_p^\unr)=\hat{\Q}_p^\unr-\Q_p$, and replacing the complex conjugation with the Frobenius map (here $\hat{\Q}_p^\unr$ is the completion of the maximal unramified extension of $\Q_p$). Following the arguments of \cite{Shimura-ArithProp}, Franc proves statement analogue to \eqref{intro rationality} (see \cite[Theorem 5.1.5]{Franc}). 

In \cite[\S 6.1.3]{Franc}, Franc asks for a construction of his $p$-adic Shimura-Maass operator by means of a (non-rigid analytic) splitting $\Psi_p$ of the Hodge filtration, similar to what happens over $X_1(N)$ (in the real analytic case \cite{Shimura-ArithProp}) and $X_1^\ord(N)$ (in the $p$-adic rigid analytic case \cite{KatzCM}). The first result of this paper is to provide such a splitting $\Psi_p$, and define the associated $p$-adic Shimura-Maass operator. 
In particular, we show that our splitting $\Psi_p$ coincides at CM points with the Hodge splitting $\Psi_\infty$, and therefore, as in \cite{KatzCM}, we reprove the main results of \cite{Franc} by the comparison of the two Shimura-Mass operators. We also derive a relation between our $p$-adic Shimura-Maass operator and generalized Heegner cycles in the context of Mumford curves, which can be vied as an analogue of \cite[Proposition 3.24]{BDP}. In the remaining part of the introduction we describe more precisely the results of this paper.  

Instead of the curve $X$ attached to the Eichler order $\mathcal{R}$, we follow \cite{Brooks} and consider a covering $\mathcal{C}\rightarrow X$ where $\mathcal{C}$ is a geometrically connected curve defined over $\Q$ corresponding to a $\Gamma_1(N^+)$-level structure subgroup of $\hat{\mathcal{R}}^\times$, where $\hat{\mathcal{R}}=\mathcal{R}\otimes_\Z\hat{\Z}$ is the profinite completion of $\mathcal{R}$. The advantage of using $\mathcal{C}$ is that $\mathcal{C}$ is the solution of a moduli problem, and we have a universal false elliptic curve $\pi:\mathcal{A}\rightarrow\mathcal{C}$ (see \S\ref{Brooks moduli problem}). Following \cite{Hashimoto}, \cite{Mori}, \cite{Brooks}, we define a quaternionic projector $e$, acting on the relative de Rham cohomology of $\pi:\mathcal{A}\rightarrow\mathcal{C}$, and define the sheaf
\[\mathcal{L}_1=e\cdot\mathcal{H}^1_\mathrm{dR}(\mathcal{A}/\mathcal{C})\]
and the line bundle 
\[\underline{\omega}=e\cdot \pi_*\left(\Omega_{\mathcal{A}/\mathcal{C}}^1\right).\]
We have a corresponding Hodge filtration 
\[0\longrightarrow \underline{\omega}\longrightarrow\mathcal{L}_1\longrightarrow\underline{\omega}^{-1}\longrightarrow 0.\]
The rigid analytic curve $\mathcal{C}^\rig$ associated with $\mathcal{C}$ admits a
$p$-adic uniformization 
\[\mathcal{C}^\rig(\C_p)\simeq\Gamma\backslash\mathcal{H}_p(\C_p)\]
for a suitable subgroup $\Gamma\subseteq\SL_2(\Q_p)$. Modular forms on $\mathcal{C}^\rig$ are then $\Gamma$-invariant sections of $\mathcal{H}_p$,  and therefore, to define a $p$-adic Shimura-Maass operator on $\mathcal{C}$ one is naturally led to consider the analogue problem for $\mathcal{H}_p$. 

Let $\mathcal{C}^0$ denote the $\C_p$-vector space of continuous (for the standard $p$-adic topology on both spaces) $\C_p$-valued functions on ${\mathcal{H}_p(\hat{\Q}_{p}^\unr)}$, and let 
$\mathcal{A}$ denote the $\hat{\Q}_p^\unr$-vector space of rigid analytic global sections of $\mathcal{H}_p(\hat{\Q}_p^\unr)$. We have a map of $\hat{\Q}_p^\unr$-vector spaces 
$r:\mathcal{A}\rightarrow\mathcal{C}^0$ and, following \cite{Franc}, 
we denote $\mathcal{A}^*$ the image of the morphism of $\mathcal{A}$-algebras 
$\mathcal{A}[X,Y]\rightarrow\mathcal{C}^0$ defined by sending $X$ to the 
function $z\mapsto 1/(z-\sigma(z))$ and $Y$ to the function 
$z\mapsto\sigma(z)$, where $\sigma:\hat{\Q}_p^\unr\rightarrow\hat{\Q}_p^\unr$ 
is the Frobenius automorphism (note that the function $z\mapsto z-\sigma(z)$ is invertible on $\mathcal{H}_p(\hat{\Q}_p^\unr)$). Denote $\hat{\mathcal{H}}_p$ the formal $\Z_p$-scheme whose generic fiber is $\mathcal{H}_p$, let $\hat{\mathcal{H}}_p^\unr$ be its base change to $\hat{\Q}_p^\unr$ and  
let $\mathcal{G}\rightarrow\hat{\mathcal{H}}_p^\unr$ be the universal $\SFD$-module. 
Denote $\underline\omega_{\mathcal{G}}=e_\mathcal{G}^*(\Omega^1_{\mathcal{G}/\hat{\mathcal{H}}_p^\unr})$,
where $e_\mathcal{G}:\hatHp^\unr\rightarrow\mathcal{G}$ is the zero-section, 
and let 
$\Liecal _{\mathcal{G}^\vee}$
be the Lie algebra of the Cartier dual $\mathcal{G}^\vee$ of $\mathcal{G}$.
Then $\underline\omega_{\mathcal{G}}$ and $\mathcal Lie_{\mathcal{G}^\vee}$ are locally free $\mathcal{O}_{\hatHp^\unr}$-modules, 
dual to each other and we have the Hodge-Tate exact sequence 
of $\mathcal{O}_{\hat{\mathcal{H}}_p^\unr}$-modules 
\[0\longrightarrow \underline\omega_{\mathcal{G}} \longrightarrow
\mathcal{H}^{1 }_\mathrm{dR}(\mathcal{G}/\hatHp^\unr)\longrightarrow 
\Liecal _{\mathcal{G}^\vee}\longrightarrow0.\]
Set $\mathcal{L}_\mathcal{G}^0=e\cdot\mathcal{H}^1_\mathrm{dR}(\mathcal{G}/\hat{\mathcal{H}_p^\unr})$ and $\omega_\mathcal{G}^0=e\cdot\underline\omega_\mathcal{G}$. 
Define $\Lambda_\mathcal{G}=H^0(\mathcal{H}_p^\unr,\mathcal{L}_\mathcal{G}^0)$, 
$\Lambda_{\mathcal{G}}^\ast=\Lambda_{\mathcal{G}}\otimes_\mathcal{A}\mathcal{A}^\ast$, $w_\mathcal{G}=H^0(\mathcal{H}_p^\unr,\underline\omega_\mathcal{G}^0)$, ${w}_{\mathcal{G}}^\ast=w_{\mathcal{G}}\otimes_\mathcal{A}\mathcal{A}^\ast$. 
We have then an injective map of $\mathcal{A}^*$-algebras 
\begin{equation}\label{intro split}
{w}_{\mathcal{G}}^\ast\longmono \Lambda_{\mathcal{G}}^\ast.\end{equation}

\begin{theorem}\label{intro theorem1}
The injection \eqref{intro split} of $\mathcal{A}^*$-algebras admits a canonical splitting 
$\Psi_p^*: \Lambda_{\mathcal{G}}^\ast\rightarrow {w}_{\mathcal{G}}^\ast$. 
\end{theorem}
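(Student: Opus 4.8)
The plan is to split the Hodge filtration by exhibiting a canonical complementary line, playing the role that the complex-conjugate line $\underline{\bar\omega}_{\mathrm{ra}}$ plays in the real-analytic splitting \eqref{intro real analytic splitting}, with the Frobenius automorphism $\sigma$ of $\hat{\Q}_p^\unr$ in place of complex conjugation. Since a module retraction $\Lambda_{\mathcal{G}}^*\to w_{\mathcal{G}}^*$ induces, through symmetric powers, a retraction of the associated inclusion of (symmetric) algebras, I would first reduce to the linear problem of splitting \eqref{intro split}. Then, using the description of the universal $\SFD$-module $\mathcal{G}$ over $\hat{\mathcal{H}}_p^\unr$ together with the projector $e$, I would record an explicit trivialization in which $\Lambda_{\mathcal{G}}$ is free of rank $2$ over $\mathcal{A}$ with a frame $\{\eta_1,\eta_2\}$ such that the Hodge line $w_{\mathcal{G}}$ is generated by a section of the form $s(z)=z\,\eta_1+\eta_2$ (up to normalization). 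In these terms \eqref{intro split} becomes the inclusion $\mathcal{A}\cdot s(z)\hookrightarrow\mathcal{A}\cdot\eta_1\oplus\mathcal{A}\cdot\eta_2$, and splitting it amounts to producing a second line transverse to $\mathcal{A}\cdot s(z)$.

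The key construction is the $\sigma$-conjugate of the Hodge line. Applying $\sigma$ to the coordinate $z$ produces the section $s^\sigma(z)=\sigma(z)\,\eta_1+\eta_2$; because the function $Y\mapsto\sigma(z)$ lies in $\mathcal{A}^*$ but not in $\mathcal{A}$, this element lives in $\Lambda_{\mathcal{G}}^*=\Lambda_{\mathcal{G}}\otimes_\mathcal{A}\mathcal{A}^*$ and not in $\Lambda_{\mathcal{G}}$, which is exactly why the scalar extension to $\mathcal{A}^*$ is forced. I would then check that $\{s(z),s^\sigma(z)\}$ is an $\mathcal{A}^*$-basis of $\Lambda_{\mathcal{G}}^*$: the transition determinant against $\{\eta_1,\eta_2\}$ equals $z-\sigma(z)$, which is invertible in $\mathcal{A}^*$ because $X\mapsto 1/(z-\sigma(z))$ was adjoined. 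Conceptually this invertibility is precisely the condition $z\in\mathcal{H}_p(\hat{\Q}_p^\unr)=\hat{\Q}_p^\unr-\Q_p$, i.e.\ $z\neq\sigma(z)$, since $\Q_p$ is the fixed field of $\sigma$; hence the two lines are everywhere transverse and no ordinary hypothesis is needed. This is the essential point: the relevant points are supersingular (as $p$ is inert), so the unit-root splitting of \cite{KatzCM} is unavailable, and the $\sigma$-conjugate line $\mathcal{A}^*\cdot s^\sigma(z)$ genuinely replaces it.

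With this basis in hand I would define $\Psi_p^*\colon\Lambda_{\mathcal{G}}^*\to w_{\mathcal{G}}^*$ to be the projection onto $\mathcal{A}^*\cdot s(z)$ along $\mathcal{A}^*\cdot s^\sigma(z)$: writing $v=a\,s(z)+b\,s^\sigma(z)$, one sets $\Psi_p^*(v)=a\,s(z)$, where Cramer's rule expresses $a$ as an $\mathcal{A}^*$-linear combination of the coordinates of $v$ divided by $z-\sigma(z)$. By construction $\Psi_p^*$ is $\mathcal{A}^*$-linear and restricts to the identity on $w_{\mathcal{G}}^*$, so it is a retraction of \eqref{intro split}; taking symmetric powers then yields the asserted splitting of $\mathcal{A}^*$-algebras. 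Finally I would verify canonicity: the complementary line $\mathcal{A}^*\cdot s^\sigma(z)$ is intrinsically the image of $w_{\mathcal{G}}^*$ under the $\sigma$-semilinear structure carried by the Dieudonn\'e (crystalline) realization of $\mathcal{G}$, so that $\Psi_p^*$ depends only on the Hodge filtration together with $\sigma$ and not on the auxiliary frame $\{\eta_1,\eta_2\}$.

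The main obstacle I expect is not the linear algebra of the projection but the two geometric inputs feeding it: producing the explicit frame $\{\eta_1,\eta_2\}$ in which the Hodge line reads $z\,\eta_1+\eta_2$ from the moduli description of the $\SFD$-module $\mathcal{G}$, and, above all, identifying the $\sigma$-conjugate line canonically and globally as a sub-object of $\Lambda_{\mathcal{G}}^*$ rather than merely through a chosen frame. Establishing global transversality --- equivalently, the invertibility of $z-\sigma(z)$ on all of $\mathcal{H}_p(\hat{\Q}_p^\unr)$ and its compatibility with the $\Gamma$-action used to descend from $\mathcal{H}_p$ to $\mathcal{C}^\rig$ --- is the step where the specific structure of the $p$-adic upper half plane and of the $\SFD$-module, as opposed to the formal analogy with the archimedean case, must genuinely be used.
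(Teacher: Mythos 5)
Your proposal is correct and follows essentially the same route as the paper: the paper works in the frame $\{dx,dy\}$ coming from the universal rigid data (your $\{\eta_1,\eta_2\}$), where the Hodge line is spanned by $d\tau=z\,dx+dy$, introduces the Frobenius conjugate $d\tau^\ast=\sigma(z)\,dx+dy$ in $\Lambda_{\mathcal{G}}^\ast$, inverts the transition matrix using the invertibility of $z-\sigma(z)$ in $\mathcal{A}^\ast$, and defines $\Psi_p^\ast$ as the projection onto the $d\tau$-component along $d\tau^\ast$ --- exactly your projection along the $\sigma$-conjugate line. The only cosmetic difference is that the paper simply writes out the resulting explicit formula $\omega=f\,dx+g\,dy\mapsto\bigl(\frac{f-g\sigma(z)}{z-\sigma(z)}\bigr)d\tau$ rather than phrasing the construction via Cramer's rule and an intrinsic characterization of the complementary line.
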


This is the main result of this paper, Theorem \ref{splitting theorem}. 
We may then attach to $\Psi_p^*$ a $p$-adic Shimura-Maass operator $\Theta_p^{(r)}$. 
We have the following two corollaries. 

\begin{corollary}\label{coro intro1} The $p$-adic Shimura-Maass operator $\delta_{p,k}$ defined by Franc in \cite{Franc} coincides with the $p$-adic Shimura-Maass operator $\Theta_p^{(r)}$ defined 
by means of the splitting in Theorem $\ref{intro theorem1}$.  
\end{corollary}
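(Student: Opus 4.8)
The plan is to reduce the identity of the two operators to an explicit computation in the coordinate $z$ on the $p$-adic upper half plane $\Hp(\hat\Q_p^\unr)$, where both operators can be written down and compared. Recall that Franc's operator is given directly by the formula $\delta_{p,k}(f)(z)=\bigl(\frac{\partial}{\partial z}+\frac{k}{z-\sigma(z)}\bigr)f(z)$, obtained from \eqref{intro SM} by formally replacing complex conjugation with the Frobenius $\sigma$, whereas $\Theta_p^{(r)}$ is built out of the splitting $\Psi_p^*$ of Theorem \ref{intro theorem1} by the Katz recipe: one applies the Gauss--Manin connection $\nabla$ to a section of $\underline\omega_\mathcal{G}^{\otimes r}$, projects the resulting $\mathcal{L}_\mathcal{G}^0$-component back onto $\underline\omega_\mathcal{G}$ via $\Psi_p^*$, and then uses the Kodaira--Spencer isomorphism to identify the cotangent factor with $\underline\omega_\mathcal{G}^{\otimes 2}$, landing in $\underline\omega_\mathcal{G}^{\otimes(r+2)}$. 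It therefore suffices to trivialize all the data over $\Hp(\hat\Q_p^\unr)$ and to match the two formulas weight by weight, i.e. to prove $\Theta_p^{(r)}=\delta_{p,r}$ for every $r$.

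First I would fix a generator $\omega$ of $\underline\omega_\mathcal{G}^0$ and record the explicit description of $\Psi_p^*$ coming from the proof of Theorem \ref{intro theorem1}: the complement to $\underline\omega_\mathcal{G}^0$ singled out by the splitting is the Frobenius-conjugate line, whose position inside $\mathcal{L}_\mathcal{G}^0$ is governed by $\sigma(z)$ in exactly the way the Hodge line is governed by $z$. This is the $p$-adic counterpart of the fact that in the real-analytic splitting \eqref{intro real analytic splitting} the complement $\underline{\bar{\omega}}_\mathrm{ra}$ is the complex-conjugate line. Concretely, the function $z\mapsto 1/(z-\sigma(z))\in\mathcal{A}^*$ is precisely the matrix coefficient measuring the projection, along this complement, of a lift to $\mathcal{L}_\mathcal{G}^0$ of a generator of the quotient $\Liecal_{\mathcal{G}^\vee}$; this explains structurally why $\mathcal{A}^*$ is the natural ring of coefficients over which the splitting is defined.

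Next I would compute $\Theta_p^{(r)}(f\,\omega^{\otimes r})$ directly. Writing $\nabla(\omega)=\eta\otimes\alpha$ with $\eta$ a generator of the $\Psi_p^*$-complement and $\alpha$ a $1$-form, the Leibniz rule gives $\nabla(f\,\omega^{\otimes r})=\mathrm{d}f\otimes\omega^{\otimes r}+r\,f\,\omega^{\otimes(r-1)}\otimes\nabla(\omega)$. The first term contributes the derivative $\partial f/\partial z$ after Kodaira--Spencer, while projecting the second term back onto $\underline\omega_\mathcal{G}^0$ via $\Psi_p^*$ produces exactly the factor $r/(z-\sigma(z))$ identified in the previous step; this is where the Frobenius enters the formula. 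Collecting the two contributions and tracking the normalization of the Kodaira--Spencer isomorphism (which absorbs the analogue of the $1/(2\pi i)$ period) yields $\Theta_p^{(r)}(f\,\omega^{\otimes r})=\bigl(\frac{\partial}{\partial z}+\frac{r}{z-\sigma(z)}\bigr)f\cdot\omega^{\otimes(r+2)}$, which is precisely $\delta_{p,r}(f)$.

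The main obstacle is the second step: pinning down, in explicit coordinates, that the complementary line produced by $\Psi_p^*$ is the Frobenius-conjugate line and that its matrix coefficient against $\underline\omega_\mathcal{G}^0$ is $1/(z-\sigma(z))$. This rests on the explicit trivialization of $\mathcal{H}^1_\mathrm{dR}(\mathcal{G}/\hatHp^\unr)$ over the upper half plane and on the compatibility of $\Psi_p^*$ with the Frobenius action built into the construction underlying Theorem \ref{intro theorem1}. Once this coordinate description and the Kodaira--Spencer normalization are in hand, the coincidence of the two operators follows from the direct calculation above.
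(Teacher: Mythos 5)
Your overall route is the same as the paper's: trivialize everything over $\mathcal{H}_p(\hat{\Q}_p^\unr)$, write the splitting, the Gauss--Manin connection and the Kodaira--Spencer map in the coordinate $z$, and read off Franc's formula weight by weight; this is exactly what Section \ref{Drinfelderia} does, culminating in \eqref{Shimura-Maass explicit formula}. However, there is a genuine error at the crucial step. You write $\nabla(\omega)=\eta\otimes\alpha$ with $\eta$ ``a generator of the $\Psi_p^*$-complement,'' and this cannot be right: the complement is by definition the kernel of the splitting, so if $\nabla(\omega)$ took values in it, then applying $\Psi_p^*$ to the Leibniz term $r f\,\omega^{\otimes(r-1)}\otimes\nabla(\omega)$ would give $0$, and you would conclude $\Theta_p^{(r)}(f\,\omega^{\otimes r})=\frac{\partial f}{\partial z}\,\omega^{\otimes(r+2)}$ --- the term $\frac{r}{z-\sigma(z)}$, which is the entire point of the comparison with Franc's operator, would vanish. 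In the paper's coordinates, with $\omega=d\tau=z\,dx+dy$ and the complement spanned by $d\tau^\ast=\sigma(z)\,dx+dy$, the correct statement is \eqref{GM}: $\nabla(d\tau)=dx\otimes dz=\frac{d\tau-d\tau^\ast}{z-\sigma(z)}\otimes dz$. Thus $\nabla(\omega)$ is valued not in the complement but in a lift of the quotient generator; it has a nonzero component $\frac{1}{z-\sigma(z)}\,d\tau$ along the Hodge line, and it is precisely this component that survives the projection $\Psi_p^*$ (which kills only $d\tau^\ast$) and produces the factor $\frac{r}{z-\sigma(z)}$. Your own first step --- that $1/(z-\sigma(z))$ is the matrix coefficient of the projection of a lift of a generator of the quotient $\Liecal_{\mathcal{G}^\vee}$ --- is the right statement; you must apply it to $\eta=dx$, not to a generator of the kernel. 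As written, your second and third steps contradict each other.

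With that correction your calculation coincides with the paper's: \eqref{formula 1} is your Leibniz computation, and \eqref{Shimura-Maass explicit formula} is your conclusion. One further point you defer --- ``tracking the normalization of the Kodaira--Spencer isomorphism'' --- is not free in the paper: proving that $\mathrm{KS}_\mathcal{G}^\rig$ is an isomorphism with $\mathrm{KS}_\mathcal{G}^\rig(d\tau\otimes d\tau)=\frac{1}{t_p}\,dz$ requires the polarization pairing $\psi_0$ and the Hashimoto-basis computation \eqref{Pi}--\eqref{formula 2}; this is also where the period $t_p$ (the $p$-adic analogue of $2\pi i$) enters, so the two operators agree only after the $1/t_p$ normalization, as recorded in \eqref{Shimura-Maass explicit formula iterates}.
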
 

The main tool which is used to prove Theorem \ref{intro theorem1} and 
Corollary \ref{coro intro1} is Drinfel'd interpretation of $\hat{\mathcal{H}}_p^\unr$ 
as moduli space of special formal modules with quaternionic multiplication; 
following \cite{Tei-univ}, we call these objects $\SFD$-modules.  
We study the relative de Rham cohomology of the universal $\SFD$-module 
$\mathcal{G}\rightarrow\hat{\mathcal{H}}_p^\unr$ by means of techniques from \cite{Tei-univ}, \cite{Faltings} and \cite{IS}. The upshot of our analysis is an explicit description of the Gauss-Manin connection and the Kodaira-Spencer isomorphism 
for $\mathcal{G}\rightarrow\hat{\mathcal{H}}_p^\unr$, once we apply to the relevant sheaves the projector $e$.  This detailed study is contained in Section \ref{Drinfelderia}, which we believe is of independent interest and is the technical hart of the paper. 

For the next corollary, define as in the real analytic case  
\[\delta_{p,k}^r=\delta_{p,k+2(r-1)}\circ\delta_{p,k+2(r-2)}\circ\dots\circ\delta_k.\] Moreover, 
we fix an embedding $\bar{\Q}\hookrightarrow\bar{\Q}_p$, and we say that $\xi\in\bar{\Q}_p$ belongs to $\bar{\Q}$ if $\xi$ belongs to the image of this embedding. 

\begin{corollary}\label{coro intro2} Let $f$ be a modular form on $\mathcal{C}$. 
Then there exists $t_p\in\C_p^\times$, independent of $f$, 
such that for every CM point $z\in K\cap\mathcal{H}_p(\hat{\Q}_p^\unr)$ we have  
$\frac{\delta_{p,k}^r(f)(z)}{t_p^j}\in\bar{\Q}$.  
\end{corollary}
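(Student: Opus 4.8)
The plan is to deduce the statement from Theorem~\ref{intro theorem1} and Corollary~\ref{coro intro1} by a comparison argument at CM points, in the spirit of \cite[Proposition~1.12]{BDP} and \cite{KatzCM}. By Corollary~\ref{coro intro1} one may replace Franc's operator $\delta_{p,k}^r$ by the operator $\Theta_p^{(r)}$ attached to the splitting $\Psi_p^*$, so it suffices to prove rationality of the values $\Theta_p^{(r)}(f)(z)$ at CM points $z\in K\cap\mathcal{H}_p(\hat{\Q}_p^\unr)$. Here $j=k+2r$ is the weight of $\delta_{p,k}^r(f)$, and I assume, as in \cite{Shimura-ArithProp} and \cite{Franc}, that $f$ is defined over $\bar{\Q}$.

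First I would analyze the fibre of the universal false elliptic curve $\pi\colon\mathcal{A}\to\mathcal{C}$ over a CM point $z$. Such a point corresponds to a false elliptic curve $A_z$ admitting complex multiplication by an order in $K$; the induced action of $K$ on the two-dimensional space $e\cdot\mathcal{H}^1_\mathrm{dR}(A_z)$ diagonalizes it into a direct sum $L^+\oplus L^-$ of eigenlines, with $L^+=\underline{\omega}_z$. This eigenspace decomposition is algebraic and defined over $\bar{\Q}$, and therefore furnishes an algebraic splitting $\Psi_\mathrm{alg}$ of the Hodge filtration at $z$. Using the explicit description of the Gauss--Manin connection and of the Kodaira--Spencer isomorphism obtained in Section~\ref{Drinfelderia}, the operator $\Theta^{(r)}_{\Psi_\mathrm{alg}}$ built from $\Psi_\mathrm{alg}$ sends $\bar{\Q}$-rational sections of $\underline{\omega}^k$ to $\bar{\Q}$-rational sections of $\underline{\omega}^{j}$; evaluating against a $\bar{\Q}$-rational differential $\omega_\mathrm{alg}\in\underline{\omega}_z$ then yields a value $\Theta^{(r)}_{\Psi_\mathrm{alg}}(f)(z)\in\bar{\Q}$ (this is the common specialization of the classical operator $\delta^r_\infty=\Theta^{(r)}_{\Psi_\infty}$ whose rationality over $\C$ is Shimura's theorem).

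The crucial step is to show that the canonical splitting $\Psi_p^*$ of Theorem~\ref{intro theorem1} specializes at every CM point $z$ to the algebraic CM splitting $\Psi_\mathrm{alg}$. This is the $p$-adic incarnation of \cite[Proposition~1.12]{BDP}, and I would establish it through Drinfel'd's moduli interpretation: a CM point of $K$ corresponds to an $\SFD$-module $\mathcal{G}_z$ on which the $K$-action is compatible with the Frobenius structure used to define $\Psi_p^*$ (here one uses that $p$ is inert, so that $K_p$ is the unramified quadratic extension matching the $\sigma$-structure on $\hat{\Q}_p^\unr$). Consequently the complementary line cut out by $\Psi_p^*$ is $K$-stable, hence equals the unique $K$-stable complement $L^-$ of $L^+=\underline{\omega}_z$. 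Granting this, one obtains $\Theta_p^{(r)}(f)(z)=t_p^{\,j}\cdot\Theta^{(r)}_{\Psi_\mathrm{alg}}(f)(z)$, where $t_p\in\C_p^\times$ is the $p$-adic period comparing the canonical trivialization of $\underline{\omega}_\mathcal{G}$ furnished by the Drinfel'd structure with the algebraic differential $\omega_\mathrm{alg}$; dividing by $t_p^j$ gives the asserted rationality.

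The main obstacle is precisely this specialization: one must verify that the Frobenius-theoretic construction of $\Psi_p^*$, carried out uniformly over all of $\hat{\mathcal{H}}_p^\unr$, genuinely degenerates to the CM eigenspace decomposition in each special fibre, which requires matching the action of $\sigma$ on $\mathcal{G}_z$ with the CM action of $K$. Equally delicate is showing that the period $t_p$ may be chosen independently of $z$: for this I would use that all CM points attached to $K$ lie in a single isogeny class, so that the canonical and algebraic trivializations at distinct CM points differ by $\bar{\Q}^\times$-multiples, absorbing the dependence on $z$ into the algebraic factor and leaving a single period $t_p$ depending only on $K$ and the fixed auxiliary data.
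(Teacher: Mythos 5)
Your proposal is essentially the paper's own argument. The paper proves this statement (restated in the body as Corollary \ref{franc result}) by combining Theorem \ref{comparison theorem} with the known algebraicity of the complex Shimura--Maass values, and the proof of Theorem \ref{comparison theorem} is exactly your ``crucial step'': at a CM point the algebraic CM eigenspace decomposition coincides with both the real-analytic Hodge splitting and the $p$-adic splitting $\Psi_p^\ast$, so the two operators, which apart from the splitting are built out of the algebraic Gauss--Manin connection and the Kodaira--Spencer isomorphism, take the same values there. The paper's implementation of this coincidence is the filtered-Frobenius-module analysis at quadratic points $\Psi\in\Hom(\Q_{p^2},\M_2(\Q_p))$ (the decomposition \eqref{splitting}, via \cite[Lemma 4.2]{IS}), which formalizes precisely your $K$-stability argument: the line killed by $\Psi_p^\ast$ is spanned by $d\tau^\ast=\sigma(z)\,dx+dy$, and at $z\in K$, with $p$ inert, $\sigma(z)$ is the $K$-conjugate of $z$, so this line is the conjugate CM eigenline. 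The one point where you genuinely diverge from the paper is the identification of $t_p$. In the paper $t_p$ is not the CM period comparing $d\tau$ with an algebraic differential: it is the global Kodaira--Spencer period, defined once and for all by the pairing identity \eqref{pairings}, and it enters through \eqref{formula 2} and \eqref{Shimura-Maass explicit formula iterates} with one factor for each iteration of the operator --- so the exponent is the number of iterations $r$, not the weight $k+2r$, and independence of $f$ and of the CM point holds by construction, making your isogeny-class argument unnecessary. Your $t_p$, a trivialization ratio at the CM point, is a different period entering with a different exponent; it is a defensible reading of the (ambiguously indexed) statement, but a fully bookkept argument has to account for both normalizations --- the Kodaira--Spencer constant of \eqref{formula 2} and the trivialization ratio --- and your version and the paper's resolve this with different, non-equivalent choices, each silently absorbing the other period.
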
 

As remarked above, this is the main result of Franc thesis \cite{Franc}, which he proves via an explicit approach following Shimura. Instead, we derive this result in Theorem \ref{comparison theorem} from a comparison between the values at CM points of 
our $p$-adic Shimura-Maass operator $\Theta_p^{(r)}$ and the 
real analytic Shimura-Maass operator $\Theta_\infty^{(r)}$. 

We explain now the connection with generalized Heegner cycles. These cycles were introduced in \cite{BDP} with the aim of studying certain anticyclotomic $p$-adic $L$-functions. Generalized Heegner cycles have been also studied in the context of Shimura curves with good reduction at $p$ by \cite{Brooks}, and in the context of Mumford curves in \cite{Masdeu}, \cite{LP2}. In this paper we introduce still an other variant of Generalized Heegner cycles. Fix a false elliptic curve $A_0$ with CM by $\mathcal{O}_K$. To any isogeny $\varphi:A_0\rightarrow A$, where $A$ is a false elliptic curve, 
we construct a cycle $\Upsilon_\varphi$ in the Chow group $\CH^m(\mathcal{A}\times A_0)$ of the Chow motive $\mathcal{A}\times A_0$, 
where $m=n/2$ with $n=k-2$. The work of Brooks \cite{Brooks} gives us a projector 
$\epsilon$ in the ring of correspondences of $X_m=\mathcal{A}^m\times A_0^m$, 
which defines the motive $\mathcal{D}=(X^m,\epsilon)$.  
The generalised Heegner cycle $\Delta_\varphi$ in the image of $\Upsilon_\varphi$ 
in $\CH^m(\mathcal{D})$ via this projector. 
Let $M_k(\Gamma)$ be the $\C_p$-vector space of rigid analytic quaternionic modular forms of weight $k$ and level $\Gamma$; elements of $M_k(\Gamma)$ are functions from $\mathcal{H}_p(\C_p)=\C_p-\Q_p$ to $\C_p$ which transform under the action of $\Gamma$ by the automorphic factor of weight $k$. 
We construct a $p$-adic Abel-Jacobi map 
\[\AJ_p:\CH^m(\mathcal{D})\longrightarrow \left(M_k(\Gamma)\otimes\mathrm{Sym}^meH^1_\mathrm{dR}(A_0)\right)^\vee\] where $^\vee$ denotes $\C_p$-linear dual. 
If follows from our work that $\mathcal{L}_\mathcal{G}^0=e\mathcal{H}^1_\mathrm{dR}(\mathcal{G}/\hat{\mathcal{H}}_p^\unr)$ is equipped with two canonical sections $\omega_\can$ and $\eta_\can$, such that $\omega_\can$ is a generator of the 
invertible sheaf $\omega_\mathcal{G}^0$. 
Let $\omega_f\in w_\mathcal{G}^0$ be the $\Gamma$-invariant differential form  associated with $f\in M_k(\Gamma)$, and let $F_f$ its Coleman primitive satisfying $\nabla(F_f)=\omega_f$, where $\nabla$ is the Gauss-Manin connection. 
Denote 
$\langle,\rangle$ is the Poincar\'e pairing on $\mathrm{Sym}^neH^1_\mathrm{dR}(A_z)$, 
where $A_z$ is the fiber of $\mathcal{A}$ at $z$.  
Define the function 
\[H(z)=\langle F_{f}(z),\omega_\mathrm{can}^n(z)\rangle.\]

\begin{theorem}\label{intro theorem2} Let $\varphi:A_0\rightarrow A$ be an isogeny and $z_A$ the fiber of 
$A$ of $\mathcal{A}\rightarrow\mathcal{C}$. Then for each integer $j=n/2,\dots,n$ we have 
\[\delta_{p,k}^{n-j}(H_n)(z_A)=\AJ_p(\Delta_\varphi)(\omega_f\otimes\omega^j_\can\eta^{n-j}_\can).\]
\end{theorem}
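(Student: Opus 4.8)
The plan is to transport the argument of \cite[Proposition 3.24]{BDP}, carried out for Shimura curves of good reduction in \cite[\S 4]{Brooks}, to the Mumford curve setting; the new input is the explicit Gauss-Manin connection and Kodaira-Spencer isomorphism for the universal $\SFD$-module established in Section \ref{Drinfelderia}, together with the splitting $\Psi_p^*$ of Theorem \ref{intro theorem1}.

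First I would rewrite the left-hand side entirely in terms of the Gauss-Manin connection $\nabla$. By Corollary \ref{coro intro1}, $\delta_{p,k}$ coincides with the operator $\Theta_p^{(r)}$ attached to $\Psi_p^*$, which acts as $\nabla$ followed by the projection onto $w_\mathcal{G}^*$ determined by the splitting. The explicit Kodaira-Spencer isomorphism of Section \ref{Drinfelderia} shows that $\nabla$ carries the canonical section $\omega_\can$ to a unit multiple of $\eta_\can$; consequently each application of $\delta_{p,k}$ simultaneously raises the weight by two and trades one factor $\omega_\can$ for $\eta_\can$ inside the symmetric power. Applying the Leibniz rule for $\nabla$ to $H(z)=\langle F_f(z),\omega_\can^n(z)\rangle$, and using both the defining relation $\nabla F_f=\omega_f$ and the horizontality of the Poincar\'e pairing, I would unwind $\delta_{p,k}^{n-j}(H)$ and, after specializing at the CM fiber $z_A$, identify the result up to an explicit constant with the contraction $\langle F_f(z_A),\omega_\can^j\eta_\can^{n-j}\rangle$.

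Second I would identify this contraction with the right-hand side. Since $\Delta_\varphi$ is constructed from the graph of the isogeny $\varphi:A_0\rightarrow A$, its class is homologically trivial and its image under $\AJ_p$ is computed, through Coleman integration on the Mumford curve, as the value at $z_A$ of a Coleman primitive of $\omega_f$ paired against $\omega_\can^j\eta_\can^{n-j}$. As the Coleman primitive is determined up to horizontal sections by the equation $\nabla F_f=\omega_f$, it coincides with the $F_f$ entering the definition of $H$, and the two expressions agree.

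The main obstacle will be the explicit Coleman-integration computation of $\AJ_p(\Delta_\varphi)$ and its precise matching with the Gauss-Manin side: in the Mumford (totally degenerate) reduction setting one must control the Frobenius and monodromy structure on $\mathcal{L}_\mathcal{G}^0$, verify that the canonical sections $\omega_\can,\eta_\can$ are normalized compatibly on both sides, and keep track of the binomial and Kodaira-Spencer constants produced by the repeated Leibniz rule, checking that every bidegree term other than $(j,n-j)$ drops out at $z_A$. These compatibilities are precisely what the detailed analysis of the universal $\SFD$-module in Section \ref{Drinfelderia} is designed to supply.
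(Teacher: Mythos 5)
Your plan follows the same two-step route as the paper: your first step is Theorem \ref{lemma8.1}, whose Leibniz-rule computation gives $\Theta_p(G_j)=jG_{j-1}$ and hence $\delta_{p,k}^{n-j}(H_n)=\frac{n!}{j!}H_j$, and your second step is Theorem \ref{AJ theorem}, which identifies $H_j(z_A)$ with $\AJ_p(\Delta_\varphi)(\omega_f\otimes\omega_{A_0}^{j}\eta_{A_0}^{n-j})$. Two corrections to your first step: the Gauss--Manin connection does not carry $\omega_\can$ to a unit multiple of $\eta_\can$ --- the paper computes $\nabla(\omega_\can)=\bigl(-\frac{\omega_\can}{z^\ast-z}+\eta_\can\bigr)\otimes dz$, and the unwanted $\omega_\can$-component and all $dz^\ast$-components are removed only after applying the splitting $\Psi_p$; and the ``explicit constant'' you leave floating is $n!/j!$, which the paper's own Corollary \ref{coro9.6} retains (so the constant-free identity you were asked to prove is, as stated, off by this factor from what the computation actually yields).

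The genuine gap sits exactly where you placed the ``main obstacle.'' Your second step asserts that $\AJ_p(\Delta_\varphi)$ is computed, ``through Coleman integration on the Mumford curve,'' as the value at $z_A$ of a Coleman primitive of $\omega_f$ paired against $\omega_\can^j\eta_\can^{n-j}$; this assertion is the content of the paper's Sections 7--9 and does not follow from Coleman integration alone. The paper's mechanism is: (i) transport the \'etale Abel--Jacobi map into the category of filtered Frobenius monodromy modules via Fontaine's $D_{\mathrm{st}}$ and the semistable comparison theorem (\cite{Tsuji1}, \cite{FaltingsAEE}); (ii) compute the resulting extension class through the Gysin sequence for $U_{z_A}=\mathcal{C}^\rig-\{z_A\}$, using the slope decomposition and the invertibility of the monodromy operator $N$ (defined by annular residues on the Bruhat--Tits tree) to normalize the representative $\beta$ so that it lies in the slope-$n$ part and in $\iota\bigl(H^1(\Gamma_p,V_{n,n})\bigr)=\ker(N)$; (iii) only then invoke the Iovita--Spiess residue formulas \cite[Theorem 10.2, Corollary 10.7]{IS}, via Lemmas \ref{lemma5.2} and \ref{lemma5.3}, to convert the cup product $\langle\omega_f\otimes v,\beta\rangle_{n,n}$ into the fiber pairing $\langle F_f(z_A)\otimes v,\mathrm{Res}_{z_A}(\chi)\rangle_{A\times A_0}$; and (iv) finish with functoriality of the Poincar\'e pairing under $\varphi^*$ (Lemma \ref{lemma9.3}, Proposition \ref{proposition9.4}) and the normalization $\varphi^*\omega_A=\omega_{A_0}$ to handle the degree factors. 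Flagging ``Frobenius and monodromy structure'' as something to control is the right instinct, but without these concrete ingredients your step two is a restatement of the theorem to be proved rather than a proof of it.
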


Theorem \ref{intro theorem2} relates the Shimura-Maass operator with generalised Heegner cycles, and corresponds to Corollary \ref{coro9.6}.

We finally make a remark on $p$-adic $L$-functions. It would be interesting to use our $p$-adic Shimura-Maass operator to construct $p$-adic $L$-functions interpolating special values of the complex $L$-function of $f$ twisted by Hecke characters as in \cite{BDP}, \cite{Brooks}, \cite{Kritz}, \cite{AI}. We would like to come back to this problem in a future work.

\section{Algebraic de Rham cohomology of Shimura curves}

Throughout this section, let $k\geq 2$ be an even integer and 
$N\geq 1$ an integer. 
Fix an imaginary quadratic 
field $K/\Q$ of discriminant $D_K$ prime to $N$ 
and factor $N=N^+\cdot N^-$ by requiring that 
all primes dividing $N^+$ (respectively $N^-$) split in $K$ (respectively, 
are inert in $K$). 
Assume that $N^-$ is a square-free of an odd number of primes, and let $p\nmid N$ be a prime number 
which is inert in $K$ (thus $N^-p$ is a square-free of an even number of primes). Let $f\in S_k(\Gamma_0(Np))$ be a weight $k$ newform of level $\Gamma_0(Np)$. 
Fix also embeddings $\bar{\Q}\hookrightarrow\C$ and $\bar{\Q}\hookrightarrow\bar{\Q}_\ell$ for each prime number $\ell$. 

\subsection{Quaternion algebras}\label{quaternion algebras}
Fix a indefinite quaternion algebra $\mathcal{B}/\Q$ of discriminant $N^-p$,
a maximal order $\mathcal{R}_\mathrm{max}\subseteq\mathcal{B}$ and 
an Eichler order $\mathcal{R}\subseteq \mathcal{R}_\mathrm{max}$ 
of level $N^+$.
Fix isormorphisms $\iota_\ell:\mathcal{B}_\ell=\mathcal{B}\otimes_\Q\Q_\ell\simeq\M_2(\Q_\ell)$ 
for each prime $\ell\nmid N^-p$ such that $\iota_\ell(\mathcal{R}_\mathrm{max}\otimes_\Z\Z_\ell)$ is the subgroup $\M_2(\Z_\ell)$ and moreover for each prime $\ell\mid N^+$ we require 
that $\iota_\ell(\mathcal{R}\otimes_\Z\Z_\ell)$ is the subgroup consisting of matrices 
which are upper triangular modulo $N^+$. Define $V_1(N^+)$ to be the subgroup of $\hat{\mathcal{R}}^\times_\mathrm{max}$ to 
consisting of elements $x$ such that $\iota_\ell(x)\equiv\smallmat{*}{*}{0}{1}\pmod{N^+}$ (as a general notation, for any 
$\Z$-algebra $A$, let $\hat{A}=A\otimes_\Z\hat{\Z}$, where $\hat{\Z}$ is the pro finite completion of $\Z$). 

We need to fix a convenient basis for the $\Q$-algebra $\mathcal{B}$, 
called \emph{Hashimoto model}. Denote $M=\Q(\sqrt{p_o})$ 
the splitting field of the quadratic polynomial $X^2-p_0$, 
where $p_0$ if an auxiliary prime number fixed as in \cite[\S 1.1]{Mori} and \cite[\S 2.1]{Brooks}, 
such that: 
\begin{enumerate}
\item for all primes $\ell$ we have 
$(p_0,pN^-)_\ell=-1$ if and only if $\ell\mid pN^-$, where 
$(a,b)_\ell$ denotes the Hilbert symbol,  
\item all primes $\ell\mid N^+$ are split in the real quadratic field $M=\Q(\sqrt{p_0})$, 
where $\sqrt{p_0}$ is a square root of $p_0$ in $\bar\Q$. 
\end{enumerate} 
The choice of $p_0$ fixes a $\Q$-basis of $\mathcal{B}$ 
as in \cite[\S2]{Hashimoto} 
given by $\{\mathbf{1},\mathbf{i},\mathbf{j},\mathbf{k}\}$ 
with $\mathbf{i}^2=-pN^-$, $\mathbf{j}^2=p_0$, $\mathbf{k}=\mathbf{ij}=-\mathbf{ji}$, and $\mathbf{1}$ the unit of $\mathcal{B}$; of course, if $x\in \Q$ we will often just write $x$ for $x\cdot\mathbf{1}$. 

\subsection{Moduli problem} \label{Brooks moduli problem}A \emph{false elliptic curve} $A$ over a scheme $S$ is a
an abelian scheme $A\rightarrow S$ of relative dimension $2$ 
equipped with an embedding $\iota_A:\mathcal{R}_\mathrm{max}\hookrightarrow\End_S(A)$.
An \emph{isogeny of false elliptic curves} is an isogeny which commutes 
with the action of $\mathcal{R}_\mathrm{max}$. A \emph{full level $N^+$-structure}
on $A$ is an isomorphism of group schemes $\alpha_A:A[N^+]\simeq(\mathcal{R}_\mathrm{max}\otimes_\Z(\Z/N^+\Z))_S$, where for any group $G$ we denote $G_S$ 
the constant group scheme $G$ over $S$. Note that $\hat{\mathcal{R}}^\times_\mathrm{max}$ 
acts from the right on $A[N^+]$ and $\mathcal{R}_\mathrm{max}\otimes_\Z(\Z/N^+\Z)$ through 
the canonical projection $\hat{\mathcal{R}}_\mathrm{max}^\times\rightarrow \prod_{\ell\mid{N^+}}(\mathcal{R}_\mathrm{max}\otimes_\Z\Z_\ell)^\times$. 
A \emph{level structure of $V_1(N^+)$-type} is an equivalence class of 
full level $N^+$-structures under the (right) action of $V_1(N^+)$.

The moduli problem 
which associates to any $\Z[1/Np]$-scheme $S$ 
the set of isomorphism classes of false elliptic curves equipped with
a $V_1(N^+)$-level structure 
is representable by 
a smooth proper scheme $\mathcal{C}$ defined over $\Spec(\Z[1/Np])$
(\cite[Theorem 2.2]{Brooks}). Let $\pi:\mathcal{A}\twoheadrightarrow\mathcal{C}$ be the 
universal false elliptic curve. For any $\Z[1/Np]$-algebra $R$, 
let $\pi_R:\mathcal{A}_R\twoheadrightarrow\mathcal{C}_R$ be the base change of $\pi$
to $R$. 

\subsection{Algebraic de Rham cohomology}\label{algebraic de Rham} We review some preliminaries 
on the algebraic de Rham cohomology of Shimura curves, including the Gauss-Manin connection 
and the Kodaira-Spencer map, referring for details 
to \cite{KO}, and especially to 
\cite[\S 2.1]{Mori} for the case under consideration of Shimura curves.

We first recall some general notation. For any morphism of schemes $\phi:X\rightarrow S$, denote 
$(\Omega^\bullet_{X/S},d^\bullet_{X/S})$, or simply $\Omega^\bullet_{X/S}$ understanding 
the differentials $d^\bullet_{X/S}$, 
the complex of sheaves of relative differential forms for the morphism $\phi$. For a sheaf $\mathcal{F}$ of $\mathcal{O}_X$-modules over a scheme $X$, we denote $\mathcal{F}^\vee$ its 
$\mathcal{O}_X$-linear dual and, for a positive integer $k$, we put 
$\mathcal{F}^{\otimes k}=\mathcal{F}\otimes_{\mathcal{O}_X}\dots\otimes_{\mathcal{O}_X}\mathcal{F}$ ($k$ factors). If $\mathcal{F}$ is 
invertible, we denote $\mathcal{F}^{-1}$ its inverse, and in this case for $k$ a negative integer, $\mathcal{F}^{\otimes{k}}$ denotes $(\mathcal{F}^{-1})^{\otimes{k}}$ as usual. 

Fix a field $F$ of characteristic zero. The relative de Rham cohomology bundle for the morphism $\mathcal{A}_F\overset{\pi_F}\rightarrow\mathcal{C}_F$ is defined by 
\[\mathcal{H}^q_\mathrm{dR}(\mathcal{A}_F/\mathcal{C}_F)=\mathbb{R}^q\pi_{F*}\left(\Omega^\bullet_{\mathcal{A}_F/\mathcal{C}_F}\right).\]  

We first recall the construction of the Gauss-Manin connection. 
We have a canonical short exact sequence of locally free sheaves
\begin{equation}\label{ex seq diff} 
0\longrightarrow \pi_F^*\left(\Omega^1_{\mathcal{C}_F/F}\right)\longrightarrow
\Omega^1_{\mathcal{A}_F/\mathcal{C}_F}\longrightarrow\Omega^1_{\mathcal{A}_F/F}\longrightarrow 0 
\end{equation}
(the exactness is because $\pi_F$ is smooth). 
This exact sequence induces maps 
\[\Omega^{\bullet-i}_{\mathcal{A}_F/F}\otimes_{\mathcal{O}_{\mathcal{A}_F}}\pi_{F}^*\left(\Omega^i_{\mathcal{C}_F/F}\right)\overset{\psi^i_{\mathcal{A}_F/F}}
\longrightarrow\Omega^\bullet_{\mathcal{A}_F/F}\]
for each integer $i$, 
defining a filtration $F^i\Omega^\bullet_{\mathcal{A}_F/F}=\mathrm{Im}(\psi^i_{\mathcal{A}_F/F})$ on $\Omega^\bullet_{\mathcal{A}_F/F}$ 
with associated graded objects
\[\mathrm{gr}^i\left(\Omega^\bullet_{\mathcal{A}_F/F}\right)=\Omega^{\bullet-i}_{\mathcal{A}_F/F}\otimes_{\mathcal{O}_{\mathcal{A}_F}}\pi_{F}^*\left(\Omega^i_{\mathcal{C}_F/F}\right).\] 
Let $E_i^{p,q}$ 
denote the spectral sequence associated 
with this filtration. The $E_1^{p,q}$ terms are then given by 
$E_1^{p,q}=\R^{p+q}\pi_{F*}\left(\mathrm{gr}^p\left(\Omega^\bullet_{\mathcal{A}_F/F}\right)\right)$. Since $\Omega^p_{\mathcal{C}_F/F}$ is locally free, 
and the differentials in the complex $\pi_{F}^*\left(\Omega_{\mathcal{C}_F/F}^p\right)\otimes_{\mathcal{O}_{\mathcal{A}_F}}\Omega^\bullet_{\mathcal{A}_F/\mathcal{C}_F}$ 
are $\pi^{-1}(\mathcal{O}_{\mathcal{C}_F})$-linear, one can show that 
\[E_1^{p,q}\simeq\Omega^p_{\mathcal{C}_F/F}\otimes_{\mathcal{O}_{\mathcal{C}_F}}\mathcal{H}^q_\mathrm{dR}(\mathcal{A}_F/\mathcal{C}_F)\] 
(\cite[(7)]{KO}).  
The Gauss-Manin connection 
\[\nabla:\mathcal{H}^i_\mathrm{dR}(\mathcal{A}_F/\mathcal{C}_F)\longrightarrow
\Omega^1_{\mathcal{C}_F/F}\otimes_{\mathcal{O}_{\mathcal{C}_F}}\mathcal{H}^i_\mathrm{dR}(\mathcal{A}_F/\mathcal{C}_F)\]
is then defined as the differential $d_1^{0,i}:E_{1}^{0,i}\rightarrow E_1^{1,i}$ 
in this spectral sequence. 

We now recall various descriptions of the Kodaira-Spencer map. 
It is defined to be the boundary map 
\[\mathrm{KS}_{\mathcal{A}_F/\mathcal{C}_F}: \pi_{F*}\left(\Omega^1_{\mathcal{A}_F/\mathcal{C}_F}\right)\longrightarrow 
\R^1\pi_{F*}\left(\pi_F^*\left(\Omega^1_{\mathcal{C}_F/F}\right)\right)\] in the long exact sequence 
of derived functors obtained from \eqref{ex seq diff}. It can also be 
reconstructed from the Gauss-Manin connection as follows. 
Let $\pi_F^\vee:\mathcal{A}_F^\vee\rightarrow\mathcal{C}_F$ 
denote the dual abelian variety. 
By a result of Buzzard (\cite[Section1]{BuzzardIntegral}, see also \cite[page 4183]{Brooks}), it is known that the abelian surface $\mathcal{A}_F$
is equipped with a canonical principal polarization $\iota_{\mathcal{A}_F}:\mathcal{A}_F\simeq\mathcal{A}_F^\vee$ over $\mathcal{C}_F$,  
which we use to identify $\mathcal{A}_F^\vee$ and $\mathcal{A}_F$ 
in the following without explicit mentioning it; we recall that this polarization 
is characterized by the fact that the associated Rosati involution in $\End_{\mathcal{C}_F}(\mathcal{A}_F)$ 
restricted to the image of $\mathcal{R}_\mathrm{max}$ via the 
map $\mathcal{R}_\mathrm{max}\hookrightarrow\End_{\mathcal{C}_F}(\mathcal{A}_F)$ 
coincides with the involution $x\mapsto x^\dagger$ of $\mathcal{R}_\mathrm{max}$, defined by $x^\dagger=i^{-1}\bar{b}i$ (as usual, 
if $x=a+bi+cj+dk$, then $\bar{x}=a-bi-cj-dk$).
Using the principal polarization and the isomorphism between $\R^1\pi_{F*}(\Omega^1_{\mathcal{A}_F/F})$ and the tangent bundle 
of $\mathcal{A}^\vee_F$, 
the Hodge exact sequence can be written as 
\begin{equation}\label{Hodge}
0\longrightarrow \pi_{F*}\left(\Omega^1_{\mathcal{A}_F/\mathcal{C}_F}\right)\longrightarrow\mathcal{H}^1_\mathrm{dR}(\mathcal{A}_F/\mathcal{C}_F)
\longrightarrow
\left(\pi_{F*}\left(\Omega^1_{\mathcal{A}_F/\mathcal{C}_F}\right)\right)^\vee
\longrightarrow0\end{equation}
(\emph{cf.} \cite[(2.2)]{Mori}, \cite[\S 2.6]{Brooks}). 
The Kodaira-Spencer map 
can be defined using the Gauss-Manin connection as the composition 
\[
\begin{split}\mathrm{KS}_{\mathcal{A}_F/\mathcal{C}_F}:
\pi_{F*}\left(\Omega^1_{\mathcal{A}_F/\mathcal{C}_F}\right)\overset{\eqref{Hodge}}\longmono&\mathcal{H}^1_\mathrm{dR}(\mathcal{A}_F/\mathcal{C}_F) \overset\nabla\longrightarrow
\mathcal{H}^i_\mathrm{dR}(\mathcal{A}_F/\mathcal{C}_F)\otimes_{\mathcal{O}_{\mathcal{C}_F}}
\Omega^1_{\mathcal{C}_F/F}\overset{\eqref{Hodge}}\longepi \\
&\overset{\eqref{Hodge}}\longepi \left(\pi_{F*}\left(\Omega^1_{\mathcal{A}_F/\mathcal{C}_F}\right)\right)^\vee
\otimes_{\mathcal{O}_{\mathcal{C}_F}}
\Omega^1_{\mathcal{C}_F/F}\end{split}
\] in which the first and the last map come from the Hodge exact sequence 
\eqref{Hodge}. Therefore the Kodaira-Spencer map can also 
be seen as a map of $\mathcal{O}_{\mathcal{C}_F}$-modules, denoted again with the same symbol,  
\[\mathrm{KS}_{\mathcal{A}_F/\mathcal{C}_F}:
\pi_{F*}\left(\Omega^1_{\mathcal{A}_F/\mathcal{C}_F}\right)^{\otimes2}\longrightarrow \Omega^1_{\mathcal{C}_F/F}.\]

\subsection{Idempotents and line bundles} 
Let \[e=\frac{1}{2}\left(\mathbf{1}\otimes1+\frac{1}{p_0}\mathbf{j}\otimes\sqrt{p_0}\right)\in
\mathcal{R}_M=\mathcal{R}_\mathrm{max}\otimes_{\Z}\mathcal{O}_M[1/(2p_0)]\] 
be the idempotent 
in \cite[(1.10)]{Mori}, \cite[\S2.1]{Brooks}, where $\mathcal{O}_M$ is the ring of 
integers of $M$. We have 
an isomorphism $\iota_M:B\otimes_\Q{M}\simeq\M_2(M)$.

Suppose we have an embedding
 $M\hookrightarrow F$, allowing us to identify $M$ with a subfield of $F$; in the cases we are interested in, either $F\subseteq\bar{\Q}$ (and then we require that $F$ contains $M$),  
 or $F=\C$ (and then we view $M\hookrightarrow\C$ via the fixed embedding $\bar{\Q}\hookrightarrow\C$) or $F\subseteq\bar{\Q}_p$ (and then we require that $F$ contains the image of $M$ via the fixed embedding $\bar{\Q}\hookrightarrow\bar{\Q}_p$). 

Since $M$ is contained in $F$, we have an action of $\mathcal{R}_M$ 
on the sheaves $\pi_{F*}\left(\Omega_{\mathcal{A}_F/\mathcal{C}_F}^1\right)$ 
and $\mathcal{H}^1_\mathrm{dR}(\mathcal{A}_F/\mathcal{C}_F)$, 
and we may 
therefore define the invertible sheaf of 
$\mathcal{O}_{\mathcal{C}_F}$-modules
\begin{equation}\label{def omega}
\underline{\omega}_F=e\cdot \pi_{F*}\left(\Omega_{\mathcal{A}_F/\mathcal{C}_F}^1\right)\end{equation} and the sheaf of 
$\mathcal{O}_{\mathcal{C}_F}$-modules
\begin{equation}\label{def L} 
\mathcal{L}_F=e\cdot \mathcal{H}^1_\mathrm{dR}(\mathcal{A}_F/\mathcal{C}_F).\end{equation} Using that $e$ is fixed by the Rosati involution, 
the Hodge exact sequence \eqref{Hodge} becomes 
\[0\longrightarrow \underline{\omega}_F\longrightarrow\mathcal{L}_{F}\longrightarrow
\underline{\omega}_F^{-1}\longrightarrow0\]
(see \cite[\S 2.6]{Brooks} for details). 
For any integer $n\geq 1$, define 
\begin{equation}\label{def L_n}\mathcal{L}_{F,n}=\Sym^n(\mathcal{L}_{F}).\end{equation}
The Gauss-Manin connection is compatible with the quaternionic 
action (\cite[Proposition 2.2]{Mori}). Therefore, restricting to $\mathcal{L}_{F,1}$ and using the Leibniz rule 
(see for example \cite[\S3.2]{Brooks}), 
the Gauss-Manin connection defines a connection
\[
\nabla_n:\mathcal{L}_{F,n}
\longrightarrow\mathcal{L}_{F,n}\otimes
\Omega^1_{\mathcal{C}_F/F}.\]
By \cite[Theorem 2.5]{Mori}, restricting the Kodaira-Spencer map to 
$\underline{\omega}_F^{\otimes{2}}$ 
gives an isomorphism 
\[\mathrm{KS}_F:\underline{\omega}_F^{\otimes{2}}\simeq\Omega^1_{\mathcal{C}_F/F}.\]
We may then 
define a map $\tilde\nabla_n:\mathcal{L}_{F,n}\rightarrow\mathcal{L}_{F,n+2}$ 
by the composition 
\begin{equation}
\label{nabla n}
\xymatrix{
\tilde{\nabla}_n:\mathcal{L}_{F,n}\ar[r]^-{\nabla_n}
&\mathcal{L}_{F,n}\otimes_{\mathcal{O}_{\mathcal{C}_L}}
\Omega^1_{\mathcal{C}_F/F}\ar[r]^-{\mathrm{id}\otimes{\mathrm{KS}_F^{-1}}}&\mathcal{L}_{F,n}\otimes_{\mathcal{O}_{\mathcal{C}_L}}\underline{\omega}_F^{\otimes{2}}\ar[r]&\mathcal{L}_{F,n}\otimes_{\mathcal{O}_{\mathcal{C}_L}}\mathcal{L}_{F,2}
\ar[r]&\mathcal{L}_{F,n+2}}
\end{equation}
where the last map is the product map in the symmetric algebras. 

\subsection{Algebraic modular forms} \label{modular forms}
For any $F$-algebra $R$, we define the $R$-algebra 
\[S_k^\mathrm{alg}\left(V_1(N^+),R)\right)=H^0(\mathcal{C}_R,\underline{\omega}_R^{\otimes{k}})\] 
of \emph{algebraic modular forms} of weight $k$ and level $V_1(N^+)$ over $R$. 
One can show (\cite[\S3.1]{Brooks}) that the $R$-algebra 
$S_k^\mathrm{alg}(V_1(N^+),R)$ can be alternatively described in modular terms. 
Let $R'$ be an $R$ algebra. A \emph{test triple} over $R'$ is a triplet $(A',t',\omega')$ consisting of a false elliptic curve $A'/R'$, 
a $V_1(N^+)$-level structure $t'$ and a global section $\omega'$ of $\underline{\omega}_{A'/R'}$. An isomorphism of test triples $(A',t',\omega')$ and $(A'',t'',\omega'')$ 
is an isomorphism of false elliptic curves $\phi:A'\rightarrow A''$ such that 
$\phi(t')=t''$ and $\phi^*(\omega'')=\omega'$. A \emph{test pair} over $R'$ 
is a pair $(A',t')$ obtained from a test triplet by forgetting the datum of the global section. 
Then one can identify global sections of 
$\underline{\omega}_R^{\otimes{k}}$ with:
\begin{enumerate} 
\item A rule $F$ which assigns, to each $R$-algebra $R'$ and each 
isomorphism class of test triplets $(A',t',\omega')$ 
over $R'$, an element $F(A',t',\omega')\in R'$,   
subject to the \emph{base change axiom} (for all maps of $R$-algebras $\phi:R'\rightarrow R''$, 
we have  
$F(A',t',\phi^*(\omega''))=F(A'',\phi(t'),\omega'')$, where $A'$ is the base change of $A''$ 
via $\phi$)  
and \emph{the weight $k$ condition} ($F(A',t',\lambda\omega')=\lambda^{-k}F(A',t',\omega')$ for any 
$\lambda\in(R')^\times$) (\cite[Definition 3.2]{Brooks}).
\item A rule $F$ which assigns to each $R$-algebra $R'$ and 
each isomorphism class of test pairs $(A',t')$ over $R'$,  
a translation invariant section $F(A',t')\in \underline{\omega}^{\otimes{k}}_{A'/R'}$ subject to the \emph{base change axiom} 
(for all maps of $R$-algebras $\phi:R'\rightarrow R''$, 
we have 
$F(A',t')=\phi^*(F(A'',\phi(t'))$, 
where $A'$ is the base change of $A''$ via $\phi$)  
(\cite[Definition 3.3]{Brooks})).\end{enumerate}  
Let us make the relations between these definition more explicit 
(\cite[page 4193]{Brooks}).
Given a global section $f\in H^0(\mathcal{C}_R,\underline{\omega}^{\otimes{k}}_R)$, 
we get a function as in (2) above associating to each test pair $(A',t')$ over $R'$ the point $x_{(A',t')}\in\mathcal{C}_R(R')$, 
and taking the value of 
$f$ at $x_{(A',t')}$; 
if $F$ is as in (2), we get a function $G$ on test 
triples $(A',t',\omega')$ over $R'$ as in (1) by the formula $F(A',t')=G(A',t',\omega')\omega^{\otimes{k}}$ where $\omega\in\underline{\omega}_{A'/R'}$ 
is the choice of any translation invariant global section. 

\section{Special values of $L$-series} 
In this section we review the work of Brooks \cite{Brooks} expressing 
special values of certain $L$-functions of modular forms 
in terms of CM-values of the Shimura-Maass operator applied to the modular form in question. 

\subsection{The real analytic Shimura-Maass operator} 
We denote $(X,\mathcal{O}_X)\leadsto (X^\mathrm{an},\mathcal{O}_X^\mathrm{an})$ 
the analytification functor which takes a scheme of finite type over $\C$ 
to its associated complex analytic space (\cite[\S2]{SerreGAGA}). 
For each sheaf $\mathcal{F}$ of $\mathcal{O}_X$-modules on $X$, we also denote $\mathcal{F}^\mathrm{an}$ 
the analytification of $\mathcal{F}$, and for each morphism 
$\varphi:\mathcal{F}\rightarrow\mathcal{G}$ of $\mathcal{O}_X$-modules, we let $\varphi^\mathrm{an}:\mathcal{F}^\mathrm{an}\rightarrow\mathcal{G}^\mathrm{an}$ the corresponding morphism of analytic sheaves 
(\cite[\S3]{SerreGAGA}).
If $(X,\mathcal{O}_X)$ is an analytic space, we denote $\mathcal{O}_X^\mathrm{r\text{-}an}$ the ring of real analytic functions on $X$; 
this is a sheaf of $\mathcal{O}_X$-modules, and for any sheaf $\mathcal{F}$ of $\mathcal{O}_X$-modules, we let $\mathcal{F}^\mathrm{r\text{-}an}=\mathcal{F}\otimes_{\mathcal{O}_X}\mathcal{O}_X^\mathrm{r\text{-}an}$; when $\mathcal{F}=\mathcal{F}^\mathrm{an}$, 
we simplify the notation by writing $\mathcal{F}^\mathrm{r\text{-}an}$ instead of $(\mathcal{F}^\mathrm{an})^\mathrm{r\text{-}an}$. 

Since $\mathcal{C}_\C$ is proper and smooth over $\C$, the analytification functor $\mathcal{F}\leadsto\mathcal{F}^\mathrm{an}$ 
induces 
an equivalence of categories between the category of coherent sheaves 
$\mathcal{C}_\C$ and the category of analytic coherent sheaves of 
$\mathcal{O}_{\mathcal{C}_\C}^\mathrm{an}$-modules (\cite[Th. 2, 3]{SerreGAGA}). 
Also, the analytic sheaf obtained from the 
sheaf of algebraic de Rham cohomology $\mathcal{H}^i_\mathrm{dR}(\mathcal{A}_\C/\mathcal{C}_\C)$ 
coincides with the derived functor $\R^1\pi_{\C*}^\mathrm{an}\left(\Omega^{1}_{\mathcal{A}_\C^\mathrm{an}/\mathcal{C}_\C^\mathrm{an}}\right)$ in the category of analytic sheaves over $\mathcal{C}_\C^\mathrm{an}$ (\cite[Theorem 1]{SerreGAGA}). 

Hodge theory gives a splitting 
\[\mathcal{H}^1_\mathrm{dR}(\mathcal{A}_\C/\mathcal{C}_\C)^\mathrm{r\text{-}an}\longrightarrow\left(\pi_{\C*}\left(\Omega^1_{\mathcal{A}_\C/\mathcal{C}_\C}\right)\right)^\mathrm{r\text{-}an}\]
of the corresponding Hodge exact sequence of real analytic sheaves 
obtained from \eqref{Hodge}. 
Since this splitting is the identity on the image 
of $\left(\pi_{\C*}\left(\Omega^1_{\mathcal{A}_\C/\mathcal{C}_\C}\right)\right)^\mathrm{r\text{-}an}$ in $\mathcal{H}^1_\mathrm{dR}(\mathcal{A}_\C/\mathcal{C}_\C)^\mathrm{r\text{-}an}$, it gives rise to a map
$\Psi_\infty:\mathcal{L}_{\C,1}^\mathrm{r\text{-}an}\rightarrow\underline\omega_{\C}^\mathrm{r\text{-}an}$ (\emph{cf.} \cite[Proposition 2.8]{Mori}). We may then consider the induced maps
$\Psi_{\infty,n}:\mathcal{L}_{\C,n}^\mathrm{r\text{-}an}\rightarrow
(\underline\omega_{\C}^{\otimes{n}})^\mathrm{r\text{-}an}$ for any integer $n\geq 1$.
Further, the map $\tilde\nabla_n$ gives rise to a map 
$\tilde\nabla_n^\mathrm{r\text{-}an}:\mathcal{L}_{\C,n}^\mathrm{r\text{-}an}
\rightarrow\mathcal{L}_{\C,n+2}^\mathrm{r\text{-}an}$ of real analytic sheaves. 
The composition 
\[
\xymatrix{\Theta_{\infty,n}: (\underline\omega_{\C}^{\otimes{n}})^\mathrm{r\text{-}an}\ar[r]&
\mathcal{L}_{\C,n}^\mathrm{r\text{-}an}\ar[r]^-{{\tilde\nabla}_n^\mathrm{r\text{-}an}}&\mathcal{L}_{\C,n+2}^\mathrm{r\text{-}an}\ar[r]^-{\Psi_{\infty,n}}&(\underline\omega_{\C}^{\otimes{n+2}})^\mathrm{r\text{-}an}
}
\]
is the real-analytic Shimura-Maas operator. 

The effect of $\Theta_{\infty,n}$ on modular forms is described in \cite[Proposition 3.4]{Brooks} and \cite[Proposition 2.9]{Mori}. Denote $\Gamma=\Gamma_1(N^+)$ the subgroup of 
$\mathcal{B}^\times\cap V_1(N^+)$ consisting of elements of norm equal to $1$. 
Fix an isomorphism $\mathcal{B}\otimes_\Q\R\simeq\M_2(\R)$ and denote
 $\Gamma_\infty$ the image of $\Gamma$ in $\GL_2(\R)$. 
Let $S_k(\Gamma_\infty)$ denote the $\C$-vector space of \emph{holomorphic 
modular forms} of weight $k$ and level $\Gamma_\infty$ 
consisting of those holomorphic 
functions on $\mathcal{H}_\infty$, the complex upper half plane, such that 
$f(\gamma(z))=j(\gamma,z)^kf(z)$ for all $\gamma\in\Gamma_\infty$; here
$\Gamma_\infty$ acts on $\mathcal{H}_\infty$ by fractional linear transformations 
via the map $\mathcal{B}\hookrightarrow\mathcal{B}\otimes_\Q\R\simeq\M_2(\R)$. 
We have (\emph{cf}. \cite[\S2.7]{Brooks})
\[S_k(\Gamma_\infty)\simeq H^0\left(\mathcal{C}_\C^\mathrm{an},(\underline{\omega}_\C^{\otimes{k}})^\mathrm{an}\right).\] 
Define the space $S_k^\mathrm{r\text{-}an}(\Gamma_\infty)$ of real analytic modular forms of level $\Gamma_\infty$ and 
weight $k$ to be the $\C$-vector space of real analytic functions $f:\mathcal{H}_\infty\rightarrow\C$ 
such that $f(\gamma(z))=j(\gamma,z)^kf(z)$ for all $\gamma\in\Gamma_\infty$. 
One then has 
\begin{equation}\label{real analytic modular forms}
S_k^{\mathrm{r\text{-}an}}(\Gamma_\infty)\simeq H^0\left(\mathcal{C}_\C^{\mathrm{an}},(\underline{\omega}_\C^{\otimes{k}})^\mathrm{r\text{-}an}\right).\end{equation} 
The operator $\Theta_{\infty,k}$ gives then rise to a map
$\delta_{\infty,k}:S_k(\Gamma_\infty)\rightarrow S_{k+2}^{\mathrm{r\text{-}an}}(\Gamma_\infty)$ 
and we have \[\delta_{\infty,k}\left(f(z)\right)=\frac{1}{2\pi i}\left(\frac{d}{dz}+\frac{k}{z+\bar{z}}\right)f(z).\]

\subsection{CM points and triples}\label{CM points} Fix an embedding $\bar\Q\hookrightarrow\C$. 
For any embedding $\varphi:K\hookrightarrow\mathcal{B}$ there 
exists a unique $\tau\in\mathcal{H}$ such that $\iota_\infty(\varphi(K^\times))(\tau)=\tau$. 
The additive map $K\hookrightarrow \C$ defined by 
$\alpha\mapsto j(\iota_\infty(\varphi(\alpha)),\tau)$ gives an embedding $K\hookrightarrow\C$; we 
say that $\varphi$ is \emph{normalized} if $\alpha\mapsto j(\iota_\infty(\varphi(\alpha)),\tau)$ is the identity (with respect to our fixed embedding $\bar{\Q}\hookrightarrow\C$). 

We say 
that $\tau\in\mathcal{H}$ is a \emph{CM point} if there exists 
an embedding
$\varphi:K\hookrightarrow\mathcal{B}$ 
which has $\tau$ as fixed point as above, and that a CM point 
$\tau$ 
is \emph{normalized} if $\varphi$ is normalized.    
Finally, we say that a CM point 
$\tau\in\mathcal{H}$ is a \emph{Heegner point}
if $\varphi(\mathcal{O}_K)\subseteq\mathcal{R}$ 
(\cite[\S2.4 and page 4188]{Brooks}). 

Fix a CM point $\tau$ corresponding to an embedding 
$\varphi:K\hookrightarrow\mathcal{B}$. 
Let $\mathfrak{a}$ be 
an integral ideal of $\mathcal{O}_K$, and define the 
$\mathcal{R}_\mathrm{max}$-ideal 
$\mathfrak{a}_\mathcal{B}=\mathcal{R}_\mathrm{max}\cdot\varphi(\mathfrak{a})$. This ideal is principal, generated by an element $\alpha=\alpha_\mathfrak{a}\in\mathcal{B}$. Right multiplication by $\alpha$ gives 
an isogeny $A_\tau\rightarrow A_{\alpha^{-1}\tau}$, whose 
kernel is $A_\tau[\mathfrak{a}]$. Let
$\Gamma_\mathrm{max}$ be the subgroup of $\mathcal{R}_\mathrm{max}^\times$ consisting of elements of norm equal to $1$. The image of $\alpha\tau$ by the canonical projection map $\rho_\mathrm{max}:\mathcal{H}\rightarrow\Gamma_\mathrm{max}\backslash\mathcal{H}$ 
does not depend on the choice of the representative $\alpha$, 
and therefore one may write $A_{\mathfrak{a}\star\tau}$ for the 
corresponding abelian surface. Shimura's reciprocity law states that $\rho_\mathrm{max}(\tau)$ 
is defined over the Hilbert class field $H$ of $K$, and that $\rho_\mathrm{max}(\tau)^{(\mathfrak{a}^{-1},H/K)}=\rho_\mathrm{max}(\mathfrak{a}\star\tau)$, where $(\mathfrak{a}^{-1},H/K)$ denotes the Artin symbol.

Fix a primitive $N^+$-root of unity $\zeta$. 
Fix a normalized Heegner point $\tau$, and fix a point
$P_\tau\in A_\tau[N^+]$ of exact order $N^+$ such that $e\cdot P=P$. Let $(A_\tau,P_\tau)$ denote the point on $\mathcal{C}(F)$
corresponding to the level structure 
$\mu_{N^+}\times\mu_{N^+}\simeq\Z/N^+\Z\times\Z/N^+\Z\rightarrow A_\tau[N^+]$ which takes $(1,0)\in\Z/N^+\Z\times\Z/N^+\Z$ to $P_\tau$. A \emph{CM triple} is an isomorphism 
class of triples $(A_\tau,P_\tau,\omega_\tau)$ with 
$(A_\tau,P_\tau)$ as above and $\omega_\tau\in e\cdot\Omega_{A_\tau/F}$ non vanishing. 

There is an action of $\mathrm{Cl}(\mathcal{O}_K)$ on 
the set of CM triples, given by 
\[\mathfrak{a}\star(A_\tau,P_\tau,\pi^*(\omega))=
(A_\tau/A_\tau[\mathfrak{a}],\pi(P_\tau),\omega)\] where 
$\pi:A_\tau\twoheadrightarrow A_\tau/A_\tau[\mathfrak{a}]$ is the canonical projection. 

\subsection{Special value formulas} 

Fix a CM triple $(A,P,\omega)=(A_\tau,P_\tau,\omega_\tau)$ 
with $\omega$ defined over $H$, the Hilbert class field of $K$; recall that $A$ is also defined over $H$, while in general $P$ is 
only defined over 
a field $L$ as in \S\ref{Brooks moduli problem}. 

The complex structure $J_\tau$ on $\M_2(\R)$ defines a differential 
form $\omega_\C=J_\tau^*(2\pi idz_1)$, and let $\Omega_\infty\in\C$ be define 
by $\omega=\Omega_\infty\cdot\omega_\C$; clearly, different choices of $\omega$ correspond to changing $\Omega$ by a multiple in $H$. 

We now let $f$ be a modular form of weight $k$, level 
$\Gamma_1(N^+)\cap\Gamma_0(N^-)$, 
and character $\varepsilon_f$,  
and let $f^\mathrm{JL}$ be the modular form on the 
Shimura curve $\mathcal{C}_\C$ associated with $f$ by the Jacquet-Langlands correspondence. We can normalise the choice of $f^\mathrm{JL}$ so that 
the ration $\langle f,f\rangle/\langle f^\mathrm{JL},f^\mathrm{JL}\rangle$ belongs to $K$ 
(\cite[\S2.7 and page 4232]{Brooks}). 

Let $\Sigma^{(2)}$ be the set of Hecke characters $\chi$ of $K$ of infinite type 
$(\ell_1,\ell_2)$ with $\ell_1\geq k$ and $\ell_2\leq 0$. We say that $\chi\in\Sigma^{(2)}$ 
is \emph{central critical} if $\ell_1+\ell_2=k$, so that the infinite type of $\chi$ 
is $(k+j,-j)$ for some integer $j\geq0$. Denote $\Sigma_\mathrm{cc}^{(s)}$ 
the subset of $\Sigma^{(2)}$ consisting of central critical characters. 

For each positive integer $j$, let $\delta_{\infty,k}^j:S_k^{\mathrm{r\text{-}an}}(\Gamma_\infty)\rightarrow D_{k+2j}^\mathrm{r\text{-}an}(\Gamma_\infty) $ denote the 
$j$-th iterate of the Shimura-Mass operator defined by 
\[\delta_{\infty,k}^j=\delta_{\infty,k+2(j-1)}\circ\dots\circ\delta_{\infty,k+2}\circ\delta_{\infty,k}.\]

For any Hecke character, one may consider the $L$-function 
$L(f,\chi^{-1},s)$, and for $\chi\in\Sigma^{(2)}$ central critical 
define the algebraic part $L_\mathrm{alg}(f,\chi^{-1})$
of its special value at $s=0$ as in \cite[Proposition 8.7]{Brooks}.
By \cite[Proposition 8.7]{Brooks}, if $\chi\in\Sigma^{(2)}_\mathrm{cc}$ then 
$L_\mathrm{alg}(f,\chi^{-1})\in\bar{\Q}$, and we have 
\[L_\mathrm{alg}(f,\chi^{-1})=\left(\sum_{\mathfrak{a}\in\mathrm{Cl}(\mathcal{O}_K)}
\chi_j^{-1}(\mathfrak{a})\cdot\delta_{\infty,k}^j(f^{\mathrm{JL}})(\mathfrak{a}\star(A,t,\omega))\right)^2 
\] where $\chi_j=\chi\cdot\norm^{-j}$ and $\norm$ is the norm map on ideals of $\mathcal{O}_K$. In this formula we view the real analytic modular 
$\delta_{\infty,k}(f^{\mathrm{JL}})$ 
as a function on test triplets, as in \cite[Proposition 8.5]{Brooks} 
via \eqref{real analytic modular forms} 
(see also the discussion in \cite[page 1094]{BDP} in the $\GL_2$ case).

For each ideal class 
$\mathfrak{a}$ in $\mathrm{Cl}(\mathcal{O}_K)$, let
$\alpha_\mathfrak{a}$ be the corresponding element in $\mathcal{B}$, 
as in \S\ref{CM points}. 
Then using the dictionary between 
real analytic forms as functions on $\mathcal{H}$ or functions on 
test triples, and recalling that $A=A_\tau$ for a normalized Heegner point $\tau$, we have 
\[L_\mathrm{alg}(f,\chi^{-1})=\left(\Omega_\infty^{k+2j}\cdot\sum_{\mathfrak{a}\in\mathrm{Cl}(\mathcal{O}_K)}
\chi_j^{-1}(\mathfrak{a})\cdot\delta_{\infty,k}^j(f^{\mathrm{JL}})(\alpha_\mathfrak{a}\cdot\tau)\right)^2.
\]  In this formula we view $\delta_{\infty,k}^j(f^{\mathrm{JL}})$ as a function on $\mathcal{H}$.

\section{The Shimura-Maass operator on the $p$-adic upper half plane}\label{Drinfelderia}
In this Section we define a $p$-adic Shimura-Maass operator in the context 
of Drinfel'd upper half plane. These results will be used in the next section 
to define a $p$-adic Shimura-Maass operator on Shimura curves, whose values at CM points will be compared with their complex analogue. 
As in the complex case, we will see that this operator plays a special role 
in defining $p$-adic $L$-functions. 

Let $\hatHp$ denote Drinfel'd $p$-adic upper half plane; this is a $\Z_p$-formal scheme, and we denote $\mathcal{H}_p$ its generic fiber, which is a $\Q_p$-rigid space (\cite[Chapitre I]{BC}).

\subsection{Drinfel'd Theorem}
Denote $D$ the unique division quaternion algebra over $\Q_p$, and let $\mathcal{O}_D$ be its 
maximal order. The field $\Q_{p^2}$ can be embedded in $D$, 
and in the following we will 
see it as a maximal commutative subfield of $D$ without 
explicitly mentioning it. Let $\sigma$ denote 
the absolute Frobenius automorphism of $\Gal(\Q_p^\unr/\Q_p)$. If fix an element $\Pi\in\mathcal{O}_D$ such that $\Pi^2=p$
and $\Pi x=\sigma(x)\Pi$ for $x\in\Q_{p^2}$, then $D=\Q_{p^2}[\Pi]$. 
We will denote $x\mapsto\bar{x}$ the restriction of $\sigma$ to $\Gal(\Q_{p^2}/\Q_p)$. 

For any $\Z_p$-algebra $B$, a \emph{formal $\mathcal{O}_D$-module over $B$} 
is a commutative $2$-dimensional formal group $G$ over $B$ equipped with an embedding 
$\iota_G:\mathcal{O}_D\hookrightarrow\End(G)$. A formal $\mathcal{O}_D$-module is said to 
be \emph{special} if for each geometric point $P$ of $\Spec(B/pB)$, 
the representation of $\mathcal{O}_D/\Pi\mathcal{O}_D$ on the tangent space $\Lie(G_P)$ 
of $G_P=G\times k_P$ is the 
sum of two distinct characters of $\mathcal{O}_D/\Pi\mathcal{O}_D$, where $k_P$ is the residue field of $P$; 
see \cite[Definition 1]{Tei-univ} for more details on this definition. By an \emph{$\SFD$-module over $B$}, 
we mean a special formal $\mathcal{O}_D$-module over $B$. If 
$G$ is a $\SFD$-module over $B\in\Nilp$, we denote 
 $\mathbb{M}(G)$ the (covariant) Cartier-Dieudonn\'e module of $G$ (\cite[Chapitre II, \S 1]{BC}); we also 
 denote $\mathbb{F}_G$ and $\mathbb{V}_G$ (or simply $\mathbb{F}$ and 
 $\mathbb{V}$ when there is no confusion)
the Frobenius and Verschiebung endomorphisms of $\mathbb{M}(G)$. If $B$ is $\Z_{p^2}$-algebra, and $G$ a formal $\mathcal{O}_D$-module, 
then we may define \[\Lie^0(G)=\{m\in\Lie(G):\iota_G(a)=am, a\in\Z_{p^2}\},\] 
\[\Lie^1(G)=\{m\in\Lie(G):\iota_G(a)=\bar{a}m, a\in\Z_{p^2}\}\] and, since $G$ is special, both $\Lie^0(G)$ and $\Lie^1(G)$ are free $B$-modules of rank $1$, 
(recall that $\bar{x}=\sigma(x)$, so $x\mapsto\bar{x}$ is the non-trivial automorphism of 
$\Gal(\Q_{p^2}/\Q_p)$).
Moreover, 
$\mathbb{M}(G)$ is also equipped with a graduation 
$\mathbb{M}(G)=\mathbb{M}^0(G)\oplus\mathbb{M}^1(G)$ where 
\[\mathbb{M}^0(G)=\{m\in \mathbb{M}(G):\iota_G(a)=am, a\in\Z_{p^2}\},\] 
\[\mathbb{M}^1(G)=\{m\in \mathbb{M}(G):\iota_G(a)=\bar{a}m, a\in\Z_{p^2}\}.\]

Fix a $\SFD$-module 
$\Phi=G\times G$ over $\bar{\mathbb{F}}_p$,  
where $G$ is the reduction modulo $p$ of a Lubin-Tate formal group 
$\hat{\mathcal{E}}$ of height $2$ over $\hat{\Z}_p^\unr$, the completion of the 
valuation ring of the maximal unramifed extension $\Z_p^\unr$ of $\Z_p$; so  
$\hat{\mathcal{E}}$ is the formal group of a 
supersingular elliptic curve $\mathcal{E}$ over $\Z^\unr_p$ (see \cite[Definition 9 and Remark 27]{Tei-univ}).  The Dieudonn\'e module $\mathbb{M}(\Phi)$ of $\Phi$
is the $\hat{\Z}_p^\unr[\mathbb{F},\mathbb{V}]$-module with $\mathbb{V}$-basis $g^0$ and $g^1$, 
satisfying the relations $\mathbb{F}(g^0)=\mathbb{V}(g^0)$ and 
$\mathbb{F}(g^1)=\mathbb{V}(g^1)$. The quaternionic order $\mathcal{O}_D$ acts via the rules $\Pi(g^0)=\mathbb{V}(g^1)$, $\Pi(g^1)=\mathbb{V}(g^0)$ and 
$a(g^0)=ag^0$, $a(g^1)=\bar{a}g^1$ for $a\in\Z_{p^2}$ (viewed inside $\mathcal{O}_D$ by the fixed embedding $\Q_{p^2}\hookrightarrow D$), where $a\mapsto\bar{a}$ is the non-trivial automorphism of $\Gal(\Q_{p^2}/\Q_p)$. 
By \cite[Corollary 30]{Tei-univ}, $\eta^0(\Phi)$ is generated over $\Z_p$ by $[g^0,0]$ and 
$[\mathbb{V}(g^1),0]$, and  
$\eta^1(\Phi)$ is generated over $\Z_p$ by $[g^1,0]$ and 
$[\mathbb{V}(g^0),0]$.

Let $\Nilp$ denote the category of $\Z_p$-algebras in 
which $p$ is nilpotent.
Denote $\mathrm{SFD}$ 
the functor on $\mathrm{Nilp}$ which associates 
to each $B\in\mathrm{Nilp}$ the 
set $\SFD(B)$ of isomorphism classes of triples $(\psi,G,\rho)$ where 
\begin{enumerate}
\item $\psi:\bar{\mathbb{F}}_p\rightarrow B/pB$ is an homomorphism, 
\item $G$ is a $\mathrm{SFD}$-module over $B$ of height $4$, 
\item $\rho:\psi_*\Phi\rightarrow G_{B/pB}= G\otimes_B{B/pB}$ is a quasi-isogeny of height $0$, called \emph{rigidification}. 
\end{enumerate}
See \cite[page 663]{Tei-univ} or \cite[Chapitre II (8.3)]{BC} for more details on the definition of the functor $\SFD$. 

Drinfel'd shows in \cite{Dr} that the 
functor $\SFD$ is represented by the $\Z_p$-formal scheme 
\[\hatHp^\unr=\hatHp\hat\otimes_{\Z_p}\hat{\Z}_p^\unr\]
(see \cite[Theorem 28]{Tei-univ}, \cite[Chapitre II (8.4)]{BC}). Note that $\hatHp^\unr$, considered as  
$\hat{\Z}_p^\unr$-formal scheme, represents the restriction $\overline{\SFD}$ 
of $\SFD$ to the category $\overline{\Nilp}$ of $\hat{\Z}_p^\unr$-algebras in which $p$ is nilpotent (\emph{cf.} \cite[Chapitre II, \S 8]{BC}). Unless otherwise stated, we will see 
$\hatHp^\unr$ as a $\hat{\Z}_p^\unr$-formal scheme. 

For later use, we review some of the steps involved in the proof of Drinfel'd Theorem. 
The crucial step is the interpretation of 
the $\Z_p$-formal scheme $\hatHp$ as the solution of a moduli problem. 
For $B\in \Nilp$, a \emph{compatible data on $S=\Spf(B)$} 
consists of a quadruplet $(\eta,T,u,\rho)$ where 
\begin{enumerate}
\item $\eta=\eta^0\oplus\eta^1$ is a sheaf of flat $\Z/2\Z$-graded $\Z_p[\Pi]$-modules on $S$, 
\item $T=T^0\oplus T^1$ is a $\Z/2\Z$-graded sheaf of $\mathcal{O}_S[\Pi]$-modules 
with $T^i$ invertible, 
\item $u:\eta\rightarrow T$ is a homogeneous degree zero map 
such that $u\otimes 1:\eta\otimes_{\Z_p}\mathcal{O}_S\rightarrow T$ is surjective, 
\item $\rho:(\Q_p^2)_S\rightarrow \eta_0\otimes_{\Z_p}\Q_p$ is a $\Q_p$-linear isomorphism, 
\end{enumerate}
which satisfy natural compatibilities, denoted $\mathbf{(C1)}$, $\mathbf{(C2)}$, $\mathbf{(C3)}$ in \cite[page 652]{Tei-univ}, to which we refer for details. 
The first step in Drinfel'd work is to show that the $\Z_p$-formal sheme
$\hat{\mathcal{H}}_p$ represents the functor which associates to each $B\in\Nilp$ the 
set of admissible quadruplets over $B$. 
To each compatible data $\mathcal{D}=(\eta,T,u,\rho)$ on $S$ one associates 
a $S$-valued point $\Psi:S\rightarrow\hatHp$ 
of $\hat{\mathcal{H}}_p$, as explained in \cite[pages 652-655]{Tei-univ}. 
The second step to prove the representability of $\overline{\SFD}$ is to 
associate with any $B\in\overline{\Nilp}$ and 
$X=(\psi,G,\rho)\in\SFD(B)$ 
a quadruplet $(\eta_X,T_X,u_X,\rho_X)$ 
which corresponds to an $S=\Spf(B)$-valued point on 
$\hatHp\hat\otimes_{\Z_p}\hat{\Z}_p^\unr$. 
If $X=(\psi,G,\rho)\in\SFD(B)$ is given as above, the quadruplet $(\eta_X,T_X,u_X,\rho_X$) can be explicitly constructed as follows:
\begin{itemize}
\item $T_X=T(G)=\mathbb{M}(G)/\mathbb{V}\mathbb{M}(G)$ the tangent space to $G$ 
at the origin, equipped with its graduation defined previously;  
\item Define $\mathbb{N}(G)=\mathbb{M}(G)\times\mathbb{M}(G)/\sim$ where 
$(\mathbb{V}(x),0)\sim(0,\Pi(x))$; we denote $[x,y]$ the class in $\mathbb{N}$ 
represented by the pair $(x,y)$. Let $\lambda:\mathbb{N}(G)\rightarrow\mathbb{M}(G)$ be the map $\lambda([x,y])=\Pi(x)-\mathbb{V}(y)$. There is a map $\mathbb{N}(G)\rightarrow \mathbb{M}(G)/\mathbb{V}\mathbb{M}(G)$ induced by the projection onto the first component. Further, 
one easily shows that there exists a unique map $L:\mathbb{M}(G)\rightarrow\mathbb{N}(G)$ satisfying the relation $\lambda\circ L=\mathbb{F}$. Let 
$\phi:\mathbb{N}(G)\rightarrow\mathbb{N}(G)$ be defined by $\phi([x,y])=L(x)+[y,0]$. Then $\eta_X=\eta(G)=\mathbb{N}^{\phi=\mathrm{Id}}$. The graduation 
of $\mathbb{M}(G)$ defines a graduation $\eta(G)=\eta^0(G)\oplus\eta^1(G)$. 
\item $u_X=u(G)$ is induced by the projection map $\mathbb{N}(G)\rightarrow \mathbb{M}(G)/\mathbb{V}\mathbb{M}(G)$.
\item Fix an isomorphism $\eta^0(\Phi)\simeq\Z_p\oplus\Z_p$. The quasi-isogeny $\rho$ induces a map 
\[\rho_X=\rho(G):\Q_p\oplus\Q_p\simeq\eta^0(\psi_*(\Phi))\otimes_{\Z_p}\Q_p\rightarrow\eta^0(\Phi)\otimes_{\Z_p}\Q_p.\]  
\end{itemize}

We finally discuss rigid analytic parameters (\cite{Tei-univ}). With an abuse of notation, let $\SFD$ be the functor from the category
$\proNilp$ of projective limits of objects in $\Nilp$ associated with $\SFD$. 
In \cite[Def. 10]{Tei-univ}, Teitelbaum introduces 
a function 
\begin{equation}\label{rigid analytic parameter} 
z_0:\SFD(\hat{\Z}_p^\unr)\longrightarrow\mathcal{H}_p(\hat{\Q}_p^\unr)\end{equation}
such that the map $X=(\psi,G,\rho)\mapsto(z_0(X),\psi)$ gives a bijection
between $\SFD(\hat{\Z}_p^\unr)$ and 
$(\hatHp\hat\otimes_{\Z_p}\hat{\Z}_p^\unr)(\hat{\Z}_p^\unr)$, which we identify with 
the set $\mathcal{H}_p(\hat{\Q}_p^\unr)\times\Hom(\hat{\Z}_p^\unr,
\hat{\Z}_p^\unr)$.  
We call the map $X\mapsto z_0(X)$ a \emph{rigid analytic parameter on $\SFD$}. 
If we let $\overline{\proNilp}$ the category of projective
limits of objects in $\overline{\Nilp}$, and we still denote $\overline{\SFD}$ 
the restriction of $\SFD$ to $\overline{\proNilp}$, 
this implies that the map $X=(\psi,G,\rho)\mapsto z_0(X)$ 
gives a bijection between $\overline{\SFD}(\hat{\Z}_p^\unr)$ 
and $\Hp(\hat{\Q}_p^\unr)$. 
By \cite[Thm. 45]{Tei-univ},
for each $z\in\mathcal{H}_p(\hat{\Q}_p^\unr)$, there exists triple $X=(\psi,G,\rho)$ in $\overline{\SFD}(\hat{\Z}_p^\unr)$ such that $z_0(X)=z$. 

\subsection{Filtered $\phi$-modules} Let $F$ be an unramified field extension of $\Q_p$. 
For an integer $a$, a \emph{$\sigma^{a}$-isocrystal $\mathbb{E}$ over $F$} 
is a pair $\mathbb{E}=(V,\phi)$ consisting of a finite dimensional $F$-vector space $V$ with a $\sigma^a$-linear isomorphism $\phi$ 
(\emph{i.e.} the isomorphism $\phi:V\rightarrow V$ satisfies the relation 
$\phi(xv)=\sigma(x)^a\cdot v$ for $x\in F$ and $v\in V$; see \cite[Chapter VI, \S1]{Zink}).  
If $a=1$, $\sigma$-isocrystals are also called \emph{$F$-isocrystals} 
(here $F$ stands for Frobenius, do not confuse with our fixed $p$-adic field $F$) of 
\emph{$\phi$-modules}, in which case the $\sigma$-linear isomorphism $\phi$ is called \emph{Frobenius} (in the following we will use both terminologies of $\phi$-modules and $F$-isocrystals). 

A \emph{filtered $F$-isocrystal}, or a \emph{filtered $\phi$-module} is a $\phi$-module $(V,\phi)$ equipped 
with an exhaustive and separate filtration $F^\bullet V$.  

If $G$ is a $p$-divisible formal group over $\bar{\mathbb{F}}_p$, one can define 
its first crystalline cohomology cohomology group 
as in \cite{Groth}, \cite{BO}, \cite[D\'efinition 2.5.7]{BBM}, in terms of the \emph{crystalline Dieudonn\'e functor} 
(among many other references, see for example 
\cite{Illusie}, \cite{Chambert-Loir}, \cite{deJong} for self-contained expositions). In the following we will denote 
$H^1_\mathrm{cris}(G)$ the global sections of the crystalline Dieudonn\'e functor (defined 
as in \cite[Th\'eor\`eme 4.2.8.1]{BBM}) tensored 
over $\hat{\Z}_p^\unr$ with $\hat{\Q}_p^\unr$.  
By construction, $H^1_\mathrm{cris}(G)$ 
is then an $F$-isocrystal. Moreover, the canonical isomorphism between $H^1_\mathrm{cris}(G)$ 
and the first de Rham cohomology group $H^1_\mathrm{dR}(G)$ of $G$ equips $H^1_\mathrm{cris}(G)$ with a canonical filtration (arising from the Hodge filtration in the de Rham cohomology), making $H^1_\mathrm{dR}(G)$ a filtered $F$-isocrystal; see 
\cite{Oda}. 

Let $G$ be a $\SFD$-module over $\bar{\mathbb{F}}_p$. 
Then the $F$-isocrystal $H^1_\mathrm{cris}(G)$ is a four-dimensional $\hat{\Q}_p^\unr$-vector space, equipped 
with its $\sigma$-linear Frobenius $\phi_{\mathrm{cris}}(G)$. 
It 
is also equipped with a $D$-module structure 
$j_G:D\hookrightarrow\End_{\hat\Q_p^\unr}\left(H^1_\mathrm{cris}(G)\right)$ which commutes with $\phi_{\mathrm{cris}}(G)$, 
and a $\Q_p$-algebra 
embedding $i_G:\M_2(\Q_p)\hookrightarrow\End_{\hat\Q_p^\unr}\left(H^1_\mathrm{cris}(G)\right)$ induced by the isomorphism $\End_{\mathcal{O}_D}(G)\simeq\M_2(\Q_p)$, 
which commutes with the $D$-action. 
Define $\phi'_\mathrm{cris}(G)=j_G(\Pi)^{-1}\phi_\mathrm{cris}(G)$ and put 
\[V_\mathrm{cris}(G)=H^1_\mathrm{cris}(G)^{\phi'_\mathrm{cris}(G)=\mathrm{Id}}.\] 
Denote $\phi_{V_\mathrm{cris}(G)}=j_G(\Pi)_{|{V_\mathrm{cris}(G)}}$ the restriction of $j_G(\Pi)$ to ${V_\mathrm{cris}(G)}$. Moreover, 
denote
 \[(\eta^i(G)\otimes_{\Z_p}\Q_p)^\vee=\Hom_{\Q_p}(\eta^i(G)\otimes_{\Z_p}\Q_p,\Q_p)\] 
 the $\Q_p$-linear dual of 
$\eta^i(G)\otimes_{\Z_p}\Q_p$. 

The following lemma is crucial in what follows, and identifies $V_\mathrm{cris}(G)$ 
with $(\eta^i(G)\otimes_{\Z_p}\Q_p)^\vee$, from which one deduces a complete description 
of the filtered $F$-isocrystal $H^1_\mathrm{cris}(G)$. It appears in a slightly different version in the proof of \cite[Lemma 5.10]{IS}. Since we did not find an 
reference for this fact in the form we need it, we add a complete proof. 

\begin{lemma}\label{prop1}
There is a canonical isomorphism 
$V_\mathrm{cris}(G)\simeq\left(\eta(G)\otimes_{\Z_p}\Q_p\right)^\vee$ of $\Q_p$-vector spaces. 
Moreover, $H^1_\mathrm{cris}(G)=V_\mathrm{cris}(G)\otimes_{\Q_p}\hat\Q_p^\unr$, 
where the right hand side is equipped with the structure of $\hat{\Q}_p^\unr$-vector space 
given by $x\cdot (v\otimes\alpha)=v\otimes(\sigma(x)\alpha)$ for $v\in V_\mathrm{cris}(G)$, 
$x,\alpha\in\hat{\Q}_p^\unr$. Finally, 
under this isomorphism the Frobenius $\phi_\mathrm{cris}(G)$ 
corresponds to $\phi_{V_\mathrm{cris}(G)}\otimes\sigma$.
\end{lemma}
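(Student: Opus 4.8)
The statement has three parts: the duality $V_\mathrm{cris}(G)\simeq(\eta(G)\otimes_{\Z_p}\Q_p)^\vee$, the descent $H^1_\mathrm{cris}(G)=V_\mathrm{cris}(G)\otimes_{\Q_p}\hat\Q_p^\unr$, and the shape of the Frobenius. The plan is to establish the last two first, since they are formal once the slopes are understood, and then return to the duality. The unifying device is to transport the whole picture to the covariant Dieudonn\'e module $\mathbb{M}(G)$. By the crystalline Dieudonn\'e comparison recalled above, there is a canonical isomorphism of $F$-isocrystals $H^1_\mathrm{cris}(G)\simeq\mathbb{M}(G)\otimes_{\hat\Z_p^\unr}\hat\Q_p^\unr$ carrying the Frobenius $\phi_\mathrm{cris}(G)$ to $\mathbb{F}$ (equivalently $p\mathbb{V}^{-1}$) and the $D$-action $j_G$ to the $\hat\Q_p^\unr$-linear extension of $\iota_G$; in particular $j_G(\Pi)$ becomes $\iota_G(\Pi)$. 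Since $G$ is special of height $4$ and arises by reduction from a supersingular formal group, $\mathbb{M}(G)$ is isoclinic of slope $1/2$: on the model $\Phi$ the relations $\mathbb{F}g^i=\mathbb{V}g^i$ give $\mathbb{F}^2=p$, so $\phi_\mathrm{cris}(G)$ is isoclinic of slope $1/2$.

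Granting this, I would first prove the second and third assertions. Because $j_G(\Pi)^2=j_G(\Pi^2)=j_G(p)=p\cdot\mathrm{Id}$, the $\hat\Q_p^\unr$-linear operator $j_G(\Pi)$ has eigenvalues of valuation $1/2$; hence the $\sigma$-linear bijection $\phi'_\mathrm{cris}(G)=j_G(\Pi)^{-1}\phi_\mathrm{cris}(G)$ is isoclinic of slope $1/2-1/2=0$, i.e. a \emph{unit-root} $F$-isocrystal. By the descent of unit-root isocrystals (the slope-$0$ case of the Dieudonn\'e--Manin decomposition, equivalently Hilbert~90 for $\hat\Q_p^\unr/\Q_p$), the fixed space $V_\mathrm{cris}(G)=H^1_\mathrm{cris}(G)^{\phi'_\mathrm{cris}(G)=\mathrm{Id}}$ is a $\Q_p$-vector space with $\dim_{\Q_p}V_\mathrm{cris}(G)=\dim_{\hat\Q_p^\unr}H^1_\mathrm{cris}(G)=4$, and the natural map $V_\mathrm{cris}(G)\otimes_{\Q_p}\hat\Q_p^\unr\to H^1_\mathrm{cris}(G)$ is an isomorphism; this is the second assertion. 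For the third, rewrite $\phi_\mathrm{cris}(G)=j_G(\Pi)\,\phi'_\mathrm{cris}(G)$ and evaluate on $v\otimes\alpha$ with $v\in V_\mathrm{cris}(G)$, $\alpha\in\hat\Q_p^\unr$: since $\phi'_\mathrm{cris}(G)$ is the identity on $V_\mathrm{cris}(G)$ and $\phi_{V_\mathrm{cris}(G)}=j_G(\Pi)|_{V_\mathrm{cris}(G)}$, one gets $\phi_\mathrm{cris}(G)(v\otimes\alpha)=\phi_{V_\mathrm{cris}(G)}(v)\otimes\sigma(\alpha)$, which is exactly $\phi_{V_\mathrm{cris}(G)}\otimes\sigma$ for the twisted $\hat\Q_p^\unr$-structure in the statement.

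It remains to produce the canonical duality $V_\mathrm{cris}(G)\simeq(\eta(G)\otimes_{\Z_p}\Q_p)^\vee$. Here I would use Teitelbaum's explicit description $\eta(G)=\mathbb{N}(G)^{\phi=\mathrm{Id}}$, with $\mathbb{N}(G)=(\mathbb{M}(G)\times\mathbb{M}(G))/\!\sim$, the map $\lambda([x,y])=\Pi(x)-\mathbb{V}(y)$, and the operator $\phi([x,y])=L(x)+[y,0]$ determined by $\lambda\circ L=\mathbb{F}$. Combining the rationalised map $\lambda$ with the comparison $H^1_\mathrm{cris}(G)\simeq\mathbb{M}(G)\otimes\hat\Q_p^\unr$ and the canonical Poincar\'e pairing on crystalline cohomology yields a pairing between $\eta(G)\otimes\Q_p$ and $V_\mathrm{cris}(G)$; the key point is that the defining relation $\lambda\circ L=\mathbb{F}$ matches the renormalisation $\phi'_\mathrm{cris}(G)=j_G(\Pi)^{-1}\mathbb{F}$, so that this pairing is $\Q_p$-valued on $\phi$-fixed against $\phi'_\mathrm{cris}$-fixed vectors, and is compatible with the $\Z/2\Z$-gradings by $\Z_{p^2}$-equivariance. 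To check perfectness I would reduce to the model $\Phi$: any two $\SFD$-modules over $\bar{\mathbb{F}}_p$ are isogenous (both isoclinic of slope $1/2$ and height $4$), and both $V_\mathrm{cris}(-)$ and $\eta(-)\otimes\Q_p$ depend only on the isogeny class after inverting $p$ (the quasi-isogeny $\rho$ furnishing the comparison), so it suffices to pair the explicit $\Z_p$-generators $[g^0,0],[\mathbb{V}g^1,0]$ of $\eta^0(\Phi)$ and $[g^1,0],[\mathbb{V}g^0,0]$ of $\eta^1(\Phi)$ against the basis $g^0,g^1,\mathbb{V}g^0,\mathbb{V}g^1$ of $\mathbb{M}(\Phi)$ and to verify that the resulting $4\times4$ matrix is invertible over $\Q_p$, using $\mathbb{F}g^i=\mathbb{V}g^i$, $\Pi g^0=\mathbb{V}g^1$ and $\Pi g^1=\mathbb{V}g^0$.

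The main obstacle is precisely this matching: aligning Drinfel'd's operator $\phi$ on $\mathbb{N}(G)$ (built from $L$ via $\lambda\circ L=\mathbb{F}$) with the renormalised crystalline Frobenius $\phi'_\mathrm{cris}(G)=j_G(\Pi)^{-1}\phi_\mathrm{cris}(G)$, while keeping straight the covariant-versus-contravariant Dieudonn\'e conventions, the $\Z/2\Z$-grading, and the $\sigma$-twist in the $\hat\Q_p^\unr$-structure, so that the pairing genuinely lands in $\Q_p$ and is perfect. Once this normalisation is fixed, perfectness is the finite linear-algebra computation on $\Phi$ above, and the conceptual content is carried entirely by the comparison isomorphism and the unit-root descent.
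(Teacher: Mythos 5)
Your handling of the second and third assertions is correct and follows a genuinely different route from the paper. You deduce them from slope theory: since $j_G(\Pi)$ commutes with $\phi_\mathrm{cris}(G)$ and $j_G(\Pi)^2=p$, one gets $(\phi'_\mathrm{cris}(G))^2=p^{-1}\phi_\mathrm{cris}(G)^2$, so $\phi'_\mathrm{cris}(G)$ is isoclinic of slope $0$ once one knows $H^1_\mathrm{cris}(G)$ is isoclinic of slope $1/2$ (for a general $\SFD$-module $G$ over $\bar{\mathbb{F}}_p$ this needs the Drinfel'd fact that all such are isogenous to $\Phi$, not just the computation on $\Phi$); unit-root descent over $\hat{\Q}_p^\unr$ then gives the $\Q_p$-form and the shape $\phi_{V_\mathrm{cris}(G)}\otimes\sigma$ of the Frobenius. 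The paper instead obtains these two assertions as byproducts of the explicit identification of $V_\mathrm{cris}(G)$ with $\left(\eta(G)\otimes_{\Z_p}\Q_p\right)^\vee$; for these parts your argument is cleaner and more conceptual.

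The gap is in the first assertion, which is the heart of the lemma. Your entire construction is transported through the claim that there is a canonical isomorphism $H^1_\mathrm{cris}(G)\simeq\mathbb{M}(G)\otimes_{\hat{\Z}_p^\unr}\hat{\Q}_p^\unr$ carrying $\phi_\mathrm{cris}(G)$ to $\mathbb{F}$. This is not correct: the canonical comparison (\cite[4.2.14]{BBM}, recalled at the start of the paper's proof) identifies $H^1_\mathrm{cris}(G)$ with the $\sigma$-twist of the $\hat{\Q}_p^\unr$-linear \emph{dual} $\mathbb{D}(G)=\Hom_{\hat{\Q}_p^\unr}(\mathbb{M}(G)[1/p],\hat{\Q}_p^\unr)$, with Frobenius $\varphi\mapsto\sigma\circ\varphi\circ\mathbb{V}_G$; there can be no canonical identification of the contravariant $H^1$ with the covariant $\mathbb{M}(G)$. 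For your slope arguments this discrepancy is harmless (the dual of an isoclinic slope-$1/2$ isocrystal is again isoclinic of slope $1/2$), but for the duality statement it is fatal, because the whole point is on which side of the duality $V_\mathrm{cris}(G)$ sits: on the covariant module the fixed points of $\iota_G(\Pi)^{-1}\mathbb{F}$, equivalently of $\mathbb{V}_G\Pi^{-1}$, are by \cite[Lemme (5.12)]{BC} precisely $\eta(G)\otimes_{\Z_p}\Q_p$ itself, \emph{not} its dual. So your route, carried out consistently, would land on $V_\mathrm{cris}(G)\simeq\eta(G)\otimes_{\Z_p}\Q_p$, and the "canonical Poincar\'e pairing" you then invoke to convert this into the dual does not exist canonically on $H^1_\mathrm{cris}$ of a $p$-divisible group: one must choose a polarization $G\simeq G^\vee$ (\cite[Ch. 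III, Lemme 4.4]{BC}), which is unique only up to a unit, so the resulting isomorphism is not canonical --- and canonicity is exactly what the lemma asserts and what is used later (Proposition \ref{identifications} and the sheaf-level identifications depend on it).

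Moreover, the one step that would make any version of your pairing work --- aligning Drinfel'd's operator $\phi$ on $\mathbb{N}(G)$ (defined by $\lambda\circ L=\mathbb{F}$) with the renormalized crystalline Frobenius $\phi'_\mathrm{cris}(G)$, so that the pairing is $\Q_p$-valued and perfect --- is precisely the step you defer as ``the main obstacle'' and never carry out. That matching is the content of \cite[Lemme (5.12)]{BC}, which is the key citation on which the paper's proof rests: once it is granted, the duality falls out of the evaluation pairing between $\mathbb{D}(G)$ and $\mathbb{M}(G)[1/p]$ with no polarization and no matrix computation. As written, your proposal proves the descent and Frobenius statements but does not prove the canonical isomorphism $V_\mathrm{cris}(G)\simeq\left(\eta(G)\otimes_{\Z_p}\Q_p\right)^\vee$.
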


\begin{proof} The $F$-isocrystal $H^1_\mathrm{cris}(G)$ 
is canonically isomorphic to the 
contravariant Dieudonn\'e module of $G$ with $p$ inverted, 
and with $\hat{\Q}_p^\unr$-action twisted by the Frobenius 
automorphism 
$\sigma$ of $\hat{\Q}_p^\unr$, equipped with the canonical Frobenius
of the contravariant Dieudonn\`e module 
(see \cite[4.2.14]{BBM}). More precisely, 
denote $\mathbb{D}(G)=\Hom_{\hat{\Q}_p^\unr}(\mathbb{M}(G)[1/p],\hat{\Q}_p^\unr)$ the $\hat{\Q}_p^\unr$-linear dual  
of the covariant Dieudonn\'e module 
$\mathbb{M}(G)$ of $G$ with $p$ inverted, and let $\mathbb{D}(G)^\sigma=\mathbb{D}(G)\otimes_{\hat{\Q}_p^\unr,\sigma}\hat{\Q}_p^\unr$, 
where the tensor product is taken with respect to the Frobenius endomorphism 
$\sigma$ of $\hat{\Q}_p^\unr$. Then 
as $\hat{\Q}_p^\unr$-vector spaces, we have $H^1_\mathrm{cris}(G)
\simeq\mathbb{D}(G)^\sigma$. Under this isomorphism the Frobenius 
$\phi_\mathrm{cris}(G)$ is given by the map $\varphi\mapsto\sigma\circ\varphi\circ\mathbb{V}_{G}$ for $\varphi\in\mathbb{D}(G)$. 

Now, by \cite[Lemme (5.12)]{BC}, we have an isomorphism of 
$\sigma^{-1}$-isocrystals
\begin{equation}\label{BC lemma}
\left(\mathbb{M}^i(G)[1/p],\mathbb{V}_G\Pi^{-1}\right)\simeq\left(\eta^i(G)\otimes_{{\Z}_p}\hat{\Q}_p^\unr,\sigma^{-1}\right)\end{equation} 
for each index $i=0,1$ (where the action of $\sigma^{-1}$ on 
$\eta^i(G)\otimes_{{\Z}_p}\hat{\Q}_p^\unr$ is on the second factor only). 
We may therefore compute $V_{\mathrm{cris}}(G)$ in terms of 
the isocrystal $\left(\eta^i(G)\otimes_{{\Z}_p}\hat{\Q}_p^\unr,\sigma^{-1}\right)$. 

As above, define 
$\mathbb{D}^i(G)=\Hom_{\hat{\Q}_p^\unr}(\mathbb{M}^i(G)[1/p],\hat{\Q}_p^\unr)$ ($\hat{\Q}_p^\unr$-linear dual) 
and let $\mathbb{D}^i(G)^\sigma$ denote the base change 
$\mathbb{D}^i(G)\otimes_{\hat{\Q}_p^\unr,\sigma}\hat{\Q}_p^\unr$ via $\sigma$. Since $\mathbb{M}(G)=\mathbb{M}^0(G)\oplus\mathbb{M}^1(G)$, 
we have $\mathbb{D}(G)^\sigma=\mathbb{D}^0(G)^\sigma\oplus\mathbb{D}^1(G)^\sigma$, and 
we may write any element $\varphi\in\mathbb{D}(G)^\sigma$  
as a pair $(\varphi_0,\varphi_1)$ with 
$\varphi_i\in \mathbb{D}^i(G)^\sigma$, $i=0,1$.   
By definition, an element $\varphi=(\varphi_0,\varphi_1)\in\mathbb{D}(G)^\sigma$ belongs to $V_\mathrm{cris}(G)$ 
if and only if $\varphi_i(V_G\Pi^{-1}(m_i))$ is equal to $\sigma^{-1}(\varphi_i(m_i))$ for all $m_i\in\mathbb{M}^i(G)[1/p]$, and for all $i=0,1$. 
Using \eqref{BC lemma}, identify $\varphi_i$ with 
a $\hat{\Q}_p^\unr$-linear homomorphism $\varphi_i:\eta^i(G)\otimes_{\Z_p}\hat{\Q}_p^\unr\rightarrow\hat{\Q}_p^\unr$, 
denoted with a slight abuse of notation with the same symbol; then  
the above equation describing $V_\mathrm{cris}(G)$ becomes 
$\varphi_i(n\otimes \sigma^{-1}(x))=\sigma^{-1}\varphi_i(n\otimes x)$ for all
$n\in \eta^i(G)$ and all $x\in\hat{\Q}_p^\unr$, or equivalently, 
since $\varphi_i$ is $\hat{\Q}_p^\unr$-linear,  
$\varphi_i(n\otimes1)=\sigma^{-1}\varphi_i(n\otimes1)$ for all 
$n\in\eta^i(G)$, and we conclude that $\varphi_i(n\otimes1)\in\Q_p$ for all
$n\in\eta^i(G)$. So $\varphi_i$ is the $\hat{\Q}_p^\unr$-linear extension of a $\Q_p$-linear homomorphism $\eta^i(G)\otimes_{\Z_p}\Q_p\rightarrow\Q_p$.
Since $\eta(G)=\eta^0(G)\oplus\eta^1(G)$, 
we then conclude that
$V_\mathrm{cris}(G)\simeq\left(\eta(G)\otimes_{\Z_p}\Q_p\right)^\vee$ as $\Q_p$-vector spaces. If $n_1,\dots,n_4$ is a $\Q_p$-basis of $\eta(G)\otimes_{\Z_p}\Q_p$, then 
$dn_1,\dots,dn_4$ defined by $dn_i(n_j)=\delta_{i,j}$ (as usual, $\delta_{i,j}=1$ if $i=j$ and $0$ otherwise) is a basis of $(\eta(G)\otimes_{\Z_p}\Q_p)^\vee$ and, by $\hat{\Q}_p^\unr$-linear extension, also of $(\eta(G)\otimes_{\Z_p}\hat{\Q}_p^\unr)^\vee$.
If we now base change the $\hat{\Q}_p^\unr$-vector space  
$(\eta(G)\otimes_{\Z_p}\hat{\Q}_p^\unr)^\vee$ via $\sigma$, we see that 
$dn_1,\dots,dn_4$ is still a $\hat{\Q}_p^\unr$-basis, and we have 
$(x\cdot dn_i)(n_j)=\sigma(x)\delta_{i,j}$ for all $x\in\hat{\Q}_p^\unr$. 
Using the above description of $H^1_\mathrm{cris}(G)$ in terms of $\mathbb{D}(G)^\sigma$, and the description of $V_\mathrm{cris}(G)$ in terms of $\eta(G)$, we have an isomorphism of $\hat{\Q}_p^\unr$-vector spaces, 
\[H^1_\mathrm{cris}(G)\simeq \left(V_\mathrm{cris}(G)\otimes_{\Q_p}\hat{\Q}_p^\unr\right)^\sigma,\] 
where the upper index $\sigma$ on the right hand side means 
that the structure of $\hat{\Q}_p^\unr$-vector space is twisted by $\sigma$ as explained above. Moreover, the $\sigma^{-1}$-linear isomorphism
$V_G\Pi^{-1}$ of $\mathbb{M}(G)[1/p]$ corresponds to 
the $\sigma^{-1}$-linear isomorphism 
$\sigma^{-1}$ of $\eta(G)\otimes_{\Z_p}\hat{\Q}_p^\unr$ (acting on the second component only), and therefore the isomorphism 
$\varphi\mapsto\sigma\circ\varphi\circ V_G$ of $\mathbb{M}(G)[1/p]^\vee$ ($\hat{\Q}_p^\unr$-linear dual) corresponds to the 
isomorphism $\Pi\otimes\sigma$ of $\left((\eta(G)\otimes_{\Z_p}{\Q}_p)^\vee\otimes_{\Q_p}\hat{\Q}_p^\unr\right)^\sigma$ given by $dn_i\otimes x\mapsto (dn_i\circ\Pi)\otimes\sigma(x)$ 
where $(dn_i\circ\Pi)(n)=n_i(\Pi n)$, which corresponds to $\phi_{V_\mathrm{cris}(G)}\otimes\sigma$ by definition of $\phi_{V_\mathrm{cris}(G)}$. 
\end{proof}

\subsection{Filtered convergent $F$-isocrystals on $\hatHp^\unr$} To describe the relative de Rham cohomology of the $p$-adic upper half plane, we first need some preliminaries on the notion of filtered convergent $F$-isocrystals introduced 
in \cite{IS}. 

We first recall some preliminaries. Let $F\subseteq\Q_p^\unr$ be an unramified extension of $\Q_p$, with valuation ring $\mathcal{O}_F$. 
If $(X,\mathcal{O}_X)$ is a $p$-adic $\mathcal{O}_F$-formal scheme, 
we denote $(X^\rig,\mathcal{O}_X^\rig)$ the associated $F$-rigid analytic space (or its generic fiber), and 
if $\mathcal{F}$ is a sheaf of $\mathcal{O}_X$-modules, 
we denote $\mathcal{F}^\rig$ its associated sheaf of $\mathcal{O}_X^\rig$-modules
(\cite[\S 7.4]{Bosch}, \cite[\S 1]{BoschCoherent}).
We say that $X$ is a 
\emph{$p$-adic $\mathcal{O}_F$-formal scheme} 
if $X$ is a $\mathcal{O}_F$-formal scheme which is locally of finite type. 
We will always assume in the following that $X$ is analytically smooth, 
so that $X^\rig$ is smooth. 

An \emph{enlargement} of $X$ is a pair $(T,z_T)$ consisting of a flat $p$-adic $\mathcal{O}_F$-formal scheme $T$ and a morphism of $\mathcal{O}_F$-formal schemes $z_T:T_0\rightarrow X$ where for each $\mathcal{O}_F$-formal scheme $T$ we denote $T_0$ the reduced closed subscheme of the closed subscheme $T_1$ of $T$ defined by the PD ideal $p\mathcal{O}_T$. 

A \emph{convergent isocrystal on $X$} (\emph{cf.} \cite[Definition 3.1]{IS}) is a rule $\mathcal{E}$ 
which assigns to each enlargement $(T,z_T)$ of $X$ a coherent $\mathcal{O}_T\otimes_{\mathcal{O}_F}F$-module $\mathcal{E}_T$ 
such that for any morphism $g:T'\rightarrow T$ of $\mathcal{O}_F$-formal schemes with $g_0:T_0'\rightarrow T_0$ satisfying $z_{T'}=z_T\circ g_0$ (where $g_0$ is induced from $g$), there 
is an isomorphism of $\mathcal{O}_{T'}\otimes_{\mathcal{O}_F}F$-modules 
$\theta_g:g^*(\mathcal{E}_T)\simeq\mathcal{E}_{T'}$ satisfying the cocycle condition.
The $\mathcal{O}_T\otimes_{\mathcal{O}_F}F$-module $\mathcal{E}_T$  
also seen as rigid analytic $\mathcal{O}_T^\rig$-module on the $F$-rigid analytic space 
$T^\rig$ (\cite[Remark (1.5)]{Ogus}); 
we distinguish the notation and write $\mathcal{E}_T^\rig$ to emphasise this viewpoint. 
 If $\mathcal{E}$ is a convergent isocrystal over $X$, for each enlargement $(T,z_T)$ which is analytically smooth over $\mathcal{O}_F$ we have an integrable connection 
\begin{equation}\label{rigid connection}
\nabla_T^\rig:\mathcal{E}_T^\rig\longrightarrow\mathcal{E}_T^\rig\otimes_{\mathcal{O}_T^\rig}\Omega^1_{X^\rig}.\end{equation}
 
A \emph{convergent $F$-isocrystal on $X$} (\emph{cf.} \cite[Definition 3.2]{IS}) is a convergent 
isocrystal $\mathcal{E}$ on $X$ equipped with an isomorphism of convergent isocrystals $\phi_\mathcal{E}:F^*\mathcal{E}\simeq\mathcal{E}$, where $F$
is the absolute Frobenius of $X_0$. 

A \emph{filtered convergent $F$-isocrystal on $X$} (\emph{cf.} \cite[Definition 3.3]{IS}) is a $F$-isocrystal 
$(\mathcal{E},\phi_\mathcal{E})$ 
such that $\mathcal{E}^\rig_X$ is equipped with an exhaustive and separated decreasing filtration $F^\bullet\mathcal{E}^\rig_X$
of coherent $\mathcal{O}_X^\rig$-submodules such that $\nabla_X^\rig(\mathcal{F}^i\mathcal{E}^\rig_X)$ is contained in $\mathcal{F}^{i-1}\mathcal{E}^\rig_X\otimes_{\mathcal{O}_X^\rig}\Omega^1_{X^\rig}$ for all $i$. 

We present two explicit examples of filtered convergent $F$-isocrystals. A third example will be 
discussed in \S\ref{universal SFD subsection}. 

\begin{example}\label{example1} The first example (\emph{cf.} \cite[Example 3.4(a)]{IS}) is the identity object of the additive tensor category of filtered isocrystals on $X$. This is the convergent isocrystal $\mathcal{E}(\mathcal{O}_X)$ on $X$ given by the rule $(T,z_T)\leadsto\mathcal{O}_T\otimes_{\mathcal{O}_F}F$ equipped with the canonical Frobenius
and the filtration given by $F^i\mathcal{O}_{X}^\rig=\mathcal{O}_X^\rig$ for $i\leq 0$ and $F^i\mathcal{O}_{X}^\rig=0$ for $i>0$ (in \emph{loc. cit.} this filtered convergent $F$-isocrystal is simply denoted $\mathcal{O}_X$).   \end{example}

\begin{example}\label{example2} Our second example (\emph{cf.} \cite[pages 345-346]{IS}) is the filtered convergent $F$-isocrystal $\mathcal{E}(V)$ attached to a representation $\rho:\GL_2\times\GL_2\rightarrow\GL(V)$, where $V$ is a finite dimensional $\Q_p$-rational representation, and $\GL_2$ is the algebraic group of invertible matrix over $\Q_p$. 
First, for a given such representation $\rho:\GL_2\times\GL_2\rightarrow\GL(V)$, let $\rho_1$ and $\rho_2$ denote the restrictions of $\rho$ to the first and second $\GL_2$-factor, respectively. 
As convergent isocrystal, $\mathcal{E}(V)=V\otimes_{\Q_p}\mathcal{E}(\mathcal{O}_{\hatHp^\unr})$. The Frobenius, making it a convergent $F$-isocrystal, is defined by $\phi_V\otimes\phi_{\mathcal{E}(\mathcal{O}_{\hatHp^\unr})}$, where $\phi_V=\rho_2\left(\smallmat 0p10\right)$ 
(so, the Frobenius depends on $\rho_2$ only ).  
To define the filtration, making it a filtered convergent $F$-isocrystal, 
we first recall some preliminaries. First, the filtration only depends on $\rho_1:\GL_2\rightarrow\GL(V)$, and therefore it is enough to define 
the filtration attached to a given representation $\rho:\GL_2\rightarrow\GL(V)$. For this, let $\mathcal{P}_n$ be the $\Q_p$-vector space of polynomials in one variable $X$ of degree at most $n$, 
equipped with a right action of $\GL_2$ by $P(X)\cdot A=(cX+d)^nP\left(\frac{aX+b}{cX+d}\right)$ for $A=\smallmat abcd$ and $P(X)\in\mathcal{P}_n$. Then put $V_n=\mathcal{P}_n^\vee$ ($\Q_p$-linear dual), equipped with the left action of $\GL_2$ by 
\[(A\cdot\varphi)(P(x))=\varphi(P(X)\cdot A).\]  
Recall that any representation $\rho:\GL_2\rightarrow\GL(V)$ can be 
written as a direct sum of a sum of representations of the form 
$V_1^{\otimes{m}}\otimes(V_1^\vee)^{\otimes{n}}$, where 
$n$, $m$ are non-negative integers.
To define the filtration on $\mathcal{E}(V)$ 
it is then enough to defined it for $V=V_1$. We have a map 
$\mathrm{ev}_{X-z}:V_1\otimes_{\Q_p}\mathcal{O}_{\mathcal{H}_p^\unr}\rightarrow\mathcal{O}_{\mathcal{H}_p^\unr}$ of sheaves 
on $\mathcal{H}_p^\unr$ defined by 
\[\varphi\otimes{f}\longmapsto[z\mapsto(\varphi(X)-z\varphi(1))f(z)].\] We put 
$F^0\mathcal{E}(V_1)=\mathcal{E}(V_1)$, $F^1\mathcal{E}(V_1)=\ker(\mathrm{ev}_{X-z})$ and $F^2\mathcal{E}(V_1)=0$. 
This defines the filtered convergent $F$-isocrystal $\mathcal{E}(V_1)$ attached to $V_1$, 
and therefore for any representation $\rho:\GL_2\times\GL_2\rightarrow\GL(V)$ we obtain a filtered convergent $F$-isocrystal $\mathcal{E}(V)$.\end{example}

\subsection{The filtered convergent $F$-isocrystal of the universal $\SFD$-module}
\label{universal SFD subsection} 
The third example of filtered convergent $F$-isocrystal 
arises from relative de Rham cohomology of the universal $\SFD$-module. Since it is more articulated 
that the previous ones, we prefer to keep it in a separate subsection. We follow \cite{Faltings}, \cite{IS}.

Let $(\lambda_\mathcal{G},\mathcal{G},\rho_\mathcal{G})$ be the universal triple, 
arising from the representability of the functor $\SFD$ by $\hatHp^\unr$; 
denote $\lambda:\mathcal{G}\rightarrow\hatHp^\unr$ be universal map.  
Let $\mathcal{G}^\vee$ be the Cartier dual of $\mathcal{G}$ (\cite[Chapitre III, \S5]{Fontaine}), which is equipped with a canonical map 
$\lambda^\vee:\mathcal{G}^\vee\rightarrow\hat{\mathcal{H}}_p^\unr$. 
In this setting one may define a convergent $F$-isocrystal
\[\mathcal{E}(\mathcal{G})=\R^1\lambda_*(\mathcal{O}_{\mathcal{G}/\hat{\Q}_p^\unr})\] 
interpolating 
crystalline cohomology sheaves 
(\cite[Theorems (3.1), (3.7)]{Ogus}): for each enlargement $(T,z_T)$, the value 
$\mathcal{E(G)}_T$ of $\mathcal{E(G)}$ 
at $T$ is defined to be the crystalline 
cohomology sheaf of coherent 
$\mathcal{O}_T\otimes_{\mathcal{O}_F}F$-modules 
$\mathbb{R}^qf_{T_1*}\mathcal{O}_{\mathcal{G}_{T_1/T}}\otimes_{\mathcal{O}_F}F$. 
The notation adopted here is standard, following \cite[\S 3]{Ogus}: $f_{T_1}:\mathcal{G}\times_{\hatHp^\unr}T_1\rightarrow T_1$ is the canonical projection where  
we use $z_T:T_1\rightarrow\hatHp^\unr$ to form the fiber product $\mathcal{G}\times_ZT_1$, and  
$\mathbb{R}^qf_{T_1*}\mathcal{O}_{\mathcal{G}_{T_1/T}}$ is the cristalline cohomology sheaf on the formal scheme $T$ 
(note that $T_1\hookrightarrow T$ is defined by the PD ideal $p\mathcal{O}_T$ and $f_{T_1}$ is smooth and proper); see \cite{BO0}, \cite{BO}.  
Since $T$ is noetherian, these are coherent sheaves of $\mathcal{O}_T\otimes_{\mathcal{O}_F}F$-modules, and therefore $\mathcal{E(G)}^\rig_T$ are coherent 
$\mathcal{O}_T^\rig$-modules. 

The coherent $\mathcal{O}_{\mathcal{H}_p^\unr}=\mathcal{O}_{\hatHp^\unr}^\rig$-module 
$\mathcal{E(G)}^\rig_{\hatHp^\unr}$ is 
equipped as in \eqref{rigid connection} with a connection $\nabla_{\hatHp^\unr}^\rig$. 
The coherent $\mathcal{O}_{\mathcal{H}_p^\unr}$-module 
 $\mathcal{E(G)}^\rig_{\hatHp^\unr}$ 
is canonically isomorphic to the 
relative rigid de Rham cohomology sheaf 
\[\mathcal{H}_\mathrm{dR}^{1,\rig}(\mathcal{G})=\mathcal{H}^1_\mathrm{dR}(\mathcal{G}^\rig/\mathcal{H}_p^\unr)=\R^1\lambda_*^\rig\left(\Omega^\bullet_{\mathcal{G}^\rig/\mathcal{H}_p^\unr}\right).\] Moreover, the connection $\nabla_{\hatHp^\unr}^\rig$
corresponds to the Gauss-Manin connection 
\[\nabla_{\mathcal{G}}^\rig:\mathcal{H}^{1,\rig}_\mathrm{dR}(\mathcal{G})\longrightarrow\Omega^1_{\mathcal{H}_p^\unr/\hat{\Q}_p^\unr}\otimes_{\mathcal{O}_{\mathcal{H}_p^\unr}}
\mathcal{H}^{1,\rig}_\mathrm{dR}(\mathcal{G})\]
whose construction
in this context follows  
\cite{KO}, and is the analogue of the construction we outlined in 
\S\ref{algebraic de Rham}; see \cite[Example 3.4(c)]{IS}, \cite[Theorem (3.10)]{Ogus}. 
The Hodge filtration on the de Rham cohomology 
$\mathcal{H}^{1,\rig}_\mathrm{dR}(\mathcal{G})$ makes then 
$\mathcal{E(G)}$ a filtered convergent $F$-isocrystal. 

The filtration on $\mathcal{E(G)}$ arising from the Hodge filtration on the de Rham cohomology can be described more explicitly. 
Denote
$\mathcal{H}^1_\mathrm{dR}(\mathcal{G}/\hatHp^\unr)$ the dual of the Lie algebra of the universal vectorial 
extension of $\mathcal{G}$, equipped with 
its structure of convergent $F$-isocrystal (\cite[Chapter IV, \S2]{Messing}, \cite[\S\S 1,9,11)]{MM}). 
By \cite[\S3.3]{BBM}, we have an isomorphism of convergent $F$-isocrystals 
\[\mathcal{E(G)}\simeq\mathcal{H}^1_\mathrm{dR}(\mathcal{G}/\hatHp^\unr).\] 
The Hodge-Tate filtration on $\mathcal{H}^{1,\rig}_\mathrm{dR}(\mathcal{G})\simeq
\mathcal{H}^1_\mathrm{dR}(\mathcal{G}/\hatHp^\unr)^\rig_{\mathcal{H}_p^\unr}$ 
can be described in explicit terms as follows. 
Denote $\underline\omega_{\mathcal{G}}=e_\mathcal{G}^*(\Omega^1_{\mathcal{G}/\hat{\mathcal{H}}_p^\unr})$,
where $e_\mathcal{G}:\hatHp^\unr\rightarrow\mathcal{G}$ is the zero-section, 
and let 
$\Liecal _{\mathcal{G}^\vee}$
be the Lie algebra of the Cartier dual $\mathcal{G}^\vee$ of $\mathcal{G}$.
Then $\underline\omega_{\mathcal{G}}$ and $\mathcal{L}_{\mathcal{G}^\vee}$ are locally free $\mathcal{O}_{\hatHp^\unr}$-modules, 
dual to each other (\cite[\S3.3]{BBM}).
We have the Hodge-Tate exact sequence 
of $\mathcal{O}_{\hat{\mathcal{H}}_p^\unr}$-modules 
\[0\longrightarrow \underline\omega_{\mathcal{G}} \longrightarrow
\mathcal{H}^{1 }_\mathrm{dR}(\mathcal{G}/\hatHp^\unr)\longrightarrow 
\Liecal _{\mathcal{G}^\vee}\longrightarrow0\]
(see \cite[Corollaire 3.3.5]{BBM}, and recall that $\SFD$-modules are 
in particular $p$-divisible group schemes, 
by \cite[Ch. II, \S7]{BC}). 
Since the functor $\mathcal{F}\leadsto\mathcal{F}^\rig$ is exact by \cite[Prop. 1.4]{BoschCoherent}, 
we obtain an exact sequence 
\begin{equation}\label{Hodge SFD}0\longrightarrow \underline\omega_{\mathcal{G}}^\rig \longrightarrow
\mathcal{H}^{1,\rig}_\mathrm{dR}(\mathcal{G})\longrightarrow 
\Liecal_{\mathcal{G}^\vee}^\rig\longrightarrow0.\end{equation}
The Hodge-Tate filtration \eqref{Hodge SFD} coincides with the Hodge filtration on the de Rham cohomology groups. 

One the one hand, the exact sequence \eqref{Hodge SFD} defines the filtration on the 
convergent $F$-isocrystal 
$\mathcal{E(G)}\simeq\mathcal{H}^1_\mathrm{dR}(\mathcal{G}/\hatHp^\unr)$. On the other hand, recall that 
$V_\mathrm{cris}(\Phi)\otimes_{\Q_p}\mathcal{E}(\mathcal{O}_{\hatHp^\unr})$ 
has a structure of filtered convergent $F$-isocrystal on $\hatHp^\unr$, with 
Frobenius given by 
$\phi_{V_\mathrm{cris}(\Phi)}\otimes\phi_{\hatHp^\unr}$, and filtration 
induced as described before by the restriction 
$i_{|\GL_2(\Q_p)}:\GL_2(\Q_p)\rightarrow\GL(V)$ of $i$ to $\GL_2(\Q_p)$ (Example \ref{example2}). 
A result of Faltings \cite[\S5]{Faltings} (see also the discussion in \cite[Lemma 5.10]{IS}) shows that, 
as filtered convergent $F$-isocrystals, we have
\begin{equation}\label{IS}
\mathcal{H}^1_\mathrm{dR}(\mathcal{G}/\hatHp^\unr)\simeq V_\mathrm{cris}(\Phi)\otimes_{\Q_p}\mathcal{E}(\mathcal{O}_{\hatHp^\unr}).
\end{equation}

The isomorphism of filtered convergent $F$-isocrystals \eqref{IS} 
can be reformulated as follows (\cite[Lemma 5.10]{IS}). Let 
$\rho:\GL_2\times\GL_2\rightarrow\GL(\M_2)$ be the representation defined by 
$\rho_1(A)(B)=AB$ and $\rho_2(A)B=B\bar{A}$ where if $A=\smallmat abcd$ 
then $\bar{A}=\smallmat d{-b}{-c}a$. Note that $\mathcal{E}(\M_2)$ is pure of weight $1$. The isomorphism \eqref{IS} can be rewritten in a more compact way 
as the isomorphism of filtered convergent $F$-isocrystals  
\begin{equation}\label{IS2}
\mathcal{H}^1_\mathrm{dR}(\mathcal{G}/\hatHp^\unr)\simeq\mathcal{E}(\M_2).\end{equation}
The Kodaira-Spencer map in this setting is the composition 
\[
\begin{split}\mathrm{KS}_{\mathcal{G}}^\rig:
\underline\omega_{\mathcal{G}}^\rig\overset{\eqref{Hodge SFD}}\longmono&\mathcal{H}^{1,\rig}_\mathrm{dR}(\mathcal{G}) 
\overset{\nabla_\mathcal{G}^\rig}\longrightarrow
\mathcal{H}^{1,\rig}_\mathrm{dR}(\mathcal{G})\otimes_{\mathcal{O}_{\mathcal{H}_p^\unr}}
\Omega^1_{\mathcal{H}_p^\unr/\hat{\Q}_p^\unr}
\overset{\eqref{Hodge SFD}}\longrightarrow\Liecal_{\mathcal{G}^\vee}^\rig
\otimes_{\mathcal{O}_{\mathcal{H}_p^\unr}}
\Omega^1_{\mathcal{H}_p^\unr/\hat{\Q}_p^\unr}\end{split}
\] in which the first and the last map come from the Hodge exact sequence 
\eqref{Hodge SFD}. 
%
%
Recalling the duality between $\underline\omega_\mathcal{G}$ and $\Liecal_{\mathcal{G}}$, we therefore obtain, as in the algebraic case, a symmetric map of $\mathcal{O}_{\mathcal{H}_p^\unr}$-modules, again denoted again with the same symbol, 
$\mathrm{KS}_{\mathcal{G}}^\rig:
\underline\omega_\mathcal{G}^\rig\otimes
\underline\omega_\mathcal{G^\vee}^\rig
\rightarrow \Omega^1_{\mathcal{H}_p^\unr/\hat{\Q}_p^\unr}$ (tensor product over ${\mathcal{O}_{\mathcal{H}_p^\unr}}$). 
By fixing a formal polarization $\iota_\mathcal{G}:\mathcal{G}\simeq\mathcal{G}^\vee$ of $\mathcal{G}$ (\cite[Chapitre III, Lemma 4.4]{BC}),  
we obtain isomorphism 
$\underline\omega_\mathcal{G}\simeq\omega_\mathcal{G^\vee}$ of $\mathcal{O}_{\mathcal{H}_p^\unr}$-modules, and  
the Kodaira-Spencer map takes the form 
\[\mathrm{KS}_{\mathcal{G}}^\rig:
(\underline\omega_\mathcal{G}^\rig)^{\otimes{2}}
\longrightarrow \Omega^1_{\mathcal{H}_p^\unr/\hat{\Q}_p^\unr}\]
where the tensor product is again over $\mathcal{O}_{\mathcal{H}_p^\unr}$. 

The Kodaira-Spencer map can also also be expressed in a different form, 
(\emph{cf.} \cite{Harris}, \cite{Mori} in the complex setting).  
Denote $\mathcal{H}_{1,\mathrm{dR}}(\mathcal{G}/\mathcal{H}_p^\unr)$ the universal vectorial extension of 
$\mathcal{G}$, which is equipped with a structure of convergent filtered $F$-isocrystal as before; see \cite[\S 5]{Faltings}. Put 
$\mathcal{H}_{1,\mathrm{dR}}^\rig(\mathcal{G})=\mathcal{H}_{1,\mathrm{dR}}(\mathcal{G}/\mathcal{H}_p^\unr)_{\hat{\mathcal{H}}_p^\unr}^\rig$.
By definition, the universal vectorial extensions of $\mathcal{G}$ and $\mathcal{G}^\vee$ are dual to each other, since each extension of 
one each gives rise to an extension of the other by duality. 
We therefore obtain a $\mathcal{O}_{\mathcal{H}_p^\rig}$-bilinear 
skew-symmetric map 
\[\mathcal{H}^{1,\rig}_{\mathrm{dR}}(\mathcal{G})\times\mathcal{H}_{1,\mathrm{dR}}^\rig(\mathcal{G})\longrightarrow\mathcal{O}_{\mathcal{H}_p^\unr}.\] 
The principal polarization $\iota_\mathcal{G}:\mathcal{G}\simeq\mathcal{G}^\vee$ identifies canonically 
$\mathcal{H}^{1,\rig}_{\mathrm{dR}}(\mathcal{G})$ 
and $\mathcal{H}_{1,\mathrm{dR}}^\rig(\mathcal{G})$, 
and we therefore obtain a pairing 
\[\langle,\rangle_\mathrm{dR}^\rig:\mathcal{H}^{1,\rig}_{\mathrm{dR}}(\mathcal{G})\times
\mathcal{H}^{1,\rig}_{\mathrm{dR}}(\mathcal{G})
\longrightarrow\mathcal{O}_{\mathcal{H}_p^\unr}\]
satisfying $\langle dx,y\rangle_\mathcal{G}^\rig=\langle x,d^\dagger y\rangle_\mathcal{G}^\rig$ for all $x,y$ sections in $\mathcal{H}^{1,\rig}_{\mathrm{dR}}(\mathcal{G})$ 
and all $d\in D$ (because $\iota_\mathcal{G}(dy)=d^\dagger\iota_\mathcal{G}(y)$)   
which we call \emph{rigid polarization pairing}. 
We may therefore construct  
a map \[\rho: \mathcal{H}^{1,\rig}_\mathrm{dR}(\mathcal{G})\longrightarrow\left(\mathcal{H}^{1,\rig}_\mathrm{dR}(\mathcal{G})\right)^\vee\longrightarrow (\underline\omega_\mathcal{G}^\rig)^\vee\]
where the first map takes a section $s$ to the map defined for a section 
$t$ by $t\mapsto \langle s,t\rangle_\mathrm{dR}^\rig$ 
and the second map is induced by duality from the inclusion $\underline\omega_\mathcal{G}^\rig\hookrightarrow\mathcal{H}^{1,\rig}_\mathrm{dR}(\mathcal{G})$. Fix now a section $s\in H^0(U,(\Omega^1_{\mathcal{H}_p^\unr/\hat{\Q}_p^\unr})^\vee)$ over some affinoid $U$. 
Then we may compose the maps to get 
\begin{equation}\label{KSdual}\begin{split}\rho_s:H^0\left(U,\underline\omega_\mathcal{G}^\rig\right)\longrightarrow &H^0\left(U,\mathcal{H}^{1,\rig}_\mathrm{dR}(\mathcal{G})\right)\overset{\nabla_\mathcal{G}^\rig} \longrightarrow
H^0\left(U,\mathcal{H}^{1,\rig}_\mathrm{dR}(\mathcal{G})
\otimes\Omega^1_{\mathcal{H}_p^\unr/\hat{\Q}_p^\unr}\right)
\overset{1\otimes s}\longrightarrow\\
&\overset{1\otimes s}\longrightarrow H^0\left(U,\mathcal{H}^{1,\rig}_\mathrm{dR}(\mathcal{G})\right)\overset\rho \longrightarrow
H^0\left(U,(\underline\omega_\mathcal{G}^\rig)^\vee\right).\end{split}
\end{equation}
The association $s\mapsto\rho_s$ defines then a map of sheaves 
\[(\mathrm{KS}_\mathcal{G}^\rig)^\vee:(\Omega^1_{\mathcal{H}_p^\unr/\hat{\Q}_p^\unr})^\vee\longrightarrow
\Hom_{\mathcal{O}_{\mathcal{H}_p^\unr}}\left(\underline\omega_\mathcal{G}^\rig,(\underline\omega_\mathcal{G}^\rig)^\vee\right).\] By construction, the dual of this map  
is the Kodaira-Spencer map, under the canonical identification between  
$\Hom(\underline\omega_\mathcal{G}^\rig,(\underline\omega_\mathcal{G}^\rig)^\vee)^\vee$ and $(\underline\omega_\mathcal{G}^\rig)^{\otimes2}$. 

\subsection{Universal rigid data} \label{section universal rigid data}

The aim of this subsection is to use the results of \cite{Tei-univ} to better 
describe the Hodge filtration \eqref{Hodge SFD}. For this, we need to recall the universal rigid data introduced 
in \cite{Tei-univ}. 

Let $V_0$ and $V_1$ be constant sheaves of one-dimensional $\Q_p$-vector 
spaces on the $\Q_p$-rigid analytic space $\mathcal{H}_p$ with basis $t_0$ and $t_1$ respectively. Define 
two invertible sheaves $T_0^\univ$ and $T_1^\univ$ on $\mathcal{H}_p$ 
by 
$T_i^\univ=\mathcal{O}_{\mathcal{H}_p}\otimes V_i$  
for $i=0,1$, where 
$\mathcal{O}_{{\mathcal{H}_p}}$ is the structural sheaf of rigid analytic functions on
$\mathcal{H}_p$. Define $T^\univ=T_0^\univ\oplus T_1^\univ$. 
For $i=0,1$, let $\eta_i^\univ$ be the constant sheaf of 
two-dimensional $\Q_p$-vector spaces on $\mathcal{H}_p$ with basis $e_{i,0}$ and $e_{i,1}$.  
One fixes 
\begin{equation}\label{universal eta}
\eta^\univ_i=\eta^i(\Phi)\otimes_{\Z_p}\Q_p\end{equation}
as in \cite[page 664]{Tei-univ}.
Define $u_0^\univ:\eta_0^\univ\rightarrow T_0^\univ$ by 
$u_0^\univ(e_{0,0})=zt_0$ and $u_0^\univ(e_{0,1})=t_0$, 
and $u_1^\univ:\eta_1^\univ\rightarrow T_1^\univ$ by 
$u_1^\univ(e_{1,0})=(p/z)t_1$ and $u_1^\univ(e_{1,1})=t_1$, 
where $z$ denotes the standard coordinate function on $\mathcal{H}_p$. 
Define $\eta^\univ=\eta_0^\univ\oplus\eta_1^\univ$ and similarly define 
$u^\univ=u_0^\univ\oplus u_1^\univ$.
We write $\rho^\univ:(\Q_p)_{\mathcal{H}_p}^2\simeq\eta_0^\univ$ for the 
isomorphism determined by the choice of the basis $\{e_{0,0}, e_{0,1}\}$. 
For $\gamma=\smallmat abcd\in\M_2(\Q_p)$ and $i=0,1$, define 
endomorphisms $\phi_i^{(\gamma)}$ in $\End_{\mathcal{O}_{\mathcal{H}_p}}(T_i^\univ)$ 
by 
\begin{equation}\label{moebius}
\phi_0^{(\gamma)}\left(f(z)\otimes t_0\right)=(cz+d)f(\gamma(z))\otimes t_0, 
\text{ and }
\phi_1^{(\gamma)}\left(f(z)\otimes t_1\right)=(a+b/z)f(\gamma(z))\otimes t_1
\end{equation}
for any $f\in\mathcal{O}_{\mathcal{H}_p}(U)$, and any affinoid $U\subseteq\mathcal{H}_p$. 
Define an action of $\SL_2(\Q_p)$ on $\eta_{i}^\univ$ for $i=0,1$ in such ta way that $u_i^\univ$ is equivariant with respect to these actions, namely,
for $\gamma=\smallmat abcd$ in $\GL_2(\Q_p)$, put $\gamma^*\binom{x_{0,0}}{x_{0,1}}=\smallmat abcd\binom{x_{0,0}}{x_{0,1}}$ and $\gamma^*\binom{x_{1,0}}{x_{1,1}}=\smallmat a{b/p}{pc}d\binom{x_{1,0}}{x_{1,1}}$. 
Let $\Z_p[\Pi]$ act on $T^\univ$ 
by $\Pi t_0=(p/z)t_1$ and $\Pi t_1=zt_0$. We let $\Z_p[\Pi]$ act on $\eta^\univ$ 
in such a way that $u^\univ$ commutes with this action.  We call the quadruplet  
\[{\mathcal{D}}^\univ=(\eta^\univ,T^\univ,u^\univ,\rho^\univ)\] the \emph{universal rigid data}. 

Passing to the associated normed sheaves (\cite[
Definition 6]{Tei-univ}), we obtain from 
${\mathcal{D}}^\univ$ a quadruplet $\hat{\mathcal{D}}^\univ=(\hat{\eta}^\univ,\hat{T}^\univ,\hat{u}^\univ,\hat{\rho}^\univ)$ 
on $\hatHp$, corresponding to a $\hatHp$-valued point, 
which is universal in the following sense: for each 
$B\in \Nilp$ and each $\Psi:S=\Spec(B)\rightarrow\hatHp$ corresponding 
to a quadruplet $(\eta,T,u,\rho)$, we have 
\begin{equation}\label{universal quadruplet} 
(\eta,T,u,\rho)=(\Psi^{-1}\hat\eta^\univ,\Psi^*\hat{T}^\univ,\Psi^{-1}\hat{u}^\univ,\Psi^{-1}\hat{\rho}^\univ).\end{equation} See \cite[Cor. 18 and Thm. 19]{Tei-univ} for more precise and complete statements. 
We call $\hat{\mathcal{D}}^\univ$ the \emph{universal formal data}, 
and we denote the quadruplet on the RHS of \eqref{universal quadruplet} by $\hat{\mathcal{D}}^\univ_\Psi$ to simplify the notation. 

The universal $\SFD$-module $\mathcal{G}$ over $\hatHp^\unr$ can be recovered
from a universal rigid data $\mathcal{D}^\univ$. 
Pulling back via the projection $\pi_{\hatHp}:\hatHp^\unr\rightarrow\hatHp$, we obtain a quadruplet 
\[\hat{\mathcal{D}}^\unr=(\hat\eta^\unr,\hat{T}^\unr,\hat{u}^\unr,\hat{\rho}^\unr)=(\pi^{-1}_{\hatHp}\hat\eta^\univ,\pi_{\hatHp}^*\hat{T}^\univ,\pi^{-1}_{\hatHp}\hat{u}^\univ,\pi^{-1}_{\hatHp}\hat\rho^\univ)\] on $\hatHp^\unr$. Comparing
\eqref{universal quadruplet} with the universal property satisfied by
$\mathcal{G}$, we see that the quadruplet $(\eta_\mathcal{G},T_\mathcal{G},u_\mathcal{G},\rho_\mathcal{G})$ 
associated to $\mathcal{G}$ coincides with the quadruplet 
$\hat{\mathcal{D}}^\unr$. In particular, the associated quadruplet 
$(\eta_\mathcal{G}^\rig,T_\mathcal{G}^\rig,u_\mathcal{G}^\rig,\rho_\mathcal{G}^\rig)$ on the rigid $\hat{\Q}_p^\unr$-rigid analytic 
space $\mathcal{G}^\rig$ is the quadruplet 
\[\mathcal{D}^\unr=(\eta^\unr,{T}^\unr,{u}^\unr,{\rho}^\unr)=
(\pi^{-1}_{\mathcal{H}_p^\unr}\eta^\univ,\pi_{\mathcal{H}_p^\unr}^*{T}^\univ,
\pi^{-1}_{\mathcal{H}_p^\unr}{u}^\univ,\pi^{-1}_{\mathcal{H}_p^\unr}\rho^\univ)\]
obtained from the quadruplet $\mathcal{D}^\univ$, where 
$\pi_{\mathcal{H}_p^\unr}:\mathcal{H}_p^\unr\rightarrow\mathcal{H}_p$ is the 
canonical projection. 

Let $(T^\unr)^\vee$ denote the $\mathcal{O}_{\mathcal{H}_p^\unr}$-dual of 
$T^\unr$, and, as above, denote $(\eta^\unr\otimes_{\Z_p}\Q_p)^\vee$ 
the $\Q_p$-linear dual of $\eta^\unr\otimes_{\Z_p}\Q_p$. 
From the surjective map $u^\unr:\eta^\unr\otimes_{\Z_p}\mathcal{O}_{\mathcal{H}_p^\unr}\twoheadrightarrow T^\unr$ induced by $u^\univ$ 
we obtain an injective map 
\[\tau:(T^\unr)^\vee\longmono (\eta^\unr\otimes_{\Z_p}\Q_p)^\vee\otimes_{\Q_p}\mathcal{O}_{\mathcal{H}_p^\unr} 
\]

\begin{proposition}\label{identifications}
We have canonical isomorphisms 
\[(T^\unr)^\vee\simeq\omega_\mathcal{G}^\rig\]
\[(\eta^\unr\otimes_{\Z_p}\Q_p)^\vee\otimes_{\Q_p}\mathcal{O}_{\mathcal{H}_p^\unr} 
\simeq \mathcal{H}^{1,\rig}_\mathrm{dR}(\mathcal{G}).\]
under which the map $\tau$ corresponds to the canonical map 
in \eqref{Hodge SFD}.
\end{proposition}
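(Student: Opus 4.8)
The plan is to build the two isomorphisms by separate means---the first from Dieudonn\'e-theoretic duality, the second by assembling \eqref{IS}, Lemma \ref{prop1} and \eqref{universal eta}---and then to verify that they intertwine $\tau$ with the Hodge inclusion \eqref{Hodge SFD}. The first two isomorphisms are essentially bookkeeping; the real content lies in this last compatibility.

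For the isomorphism $(T^\unr)^\vee\simeq\omega_\mathcal{G}^\rig$ I would use that, by the identification of the quadruplet attached to $\mathcal{G}$ with $\mathcal{D}^\unr$, the sheaf $T^\unr$ is the rigid tangent sheaf $T_\mathcal{G}^\rig$, where $T_\mathcal{G}=T(\mathcal{G})=\mathbb{M}(\mathcal{G})/\mathbb{V}\mathbb{M}(\mathcal{G})$ is the tangent space of $\mathcal{G}$ along its zero section. On the other hand $\underline\omega_\mathcal{G}=e_\mathcal{G}^*(\Omega^1_{\mathcal{G}/\hatHp^\unr})$ is the cotangent space, and these are canonically $\mathcal{O}_{\hatHp^\unr}$-dual locally free modules of rank one. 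Since $\mathcal{F}\leadsto\mathcal{F}^\rig$ is exact and commutes with duals of locally free modules, dualizing and rigidifying yields $(T^\unr)^\vee\simeq\underline\omega_\mathcal{G}^\rig=\omega_\mathcal{G}^\rig$.

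For the second isomorphism I would chain the available identifications. Because $\eta^\unr=\pi^{-1}_{\mathcal{H}_p^\unr}\eta^\univ$ and $\eta^\univ=\eta(\Phi)\otimes_{\Z_p}\Q_p$ by \eqref{universal eta}, Lemma \ref{prop1} gives $(\eta^\unr\otimes_{\Z_p}\Q_p)^\vee\simeq V_\mathrm{cris}(\Phi)$ as constant sheaves, whence $(\eta^\unr\otimes_{\Z_p}\Q_p)^\vee\otimes_{\Q_p}\mathcal{O}_{\mathcal{H}_p^\unr}\simeq V_\mathrm{cris}(\Phi)\otimes_{\Q_p}\mathcal{O}_{\mathcal{H}_p^\unr}$. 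Faltings' isomorphism \eqref{IS}, combined with $\mathcal{E}(\mathcal{O}_{\hatHp^\unr})^\rig=\mathcal{O}_{\mathcal{H}_p^\unr}$ and the canonical identification of $\mathcal{E(G)}^\rig_{\hatHp^\unr}$ with $\mathcal{H}^{1,\rig}_\mathrm{dR}(\mathcal{G})$, then matches the right-hand side with $\mathcal{H}^{1,\rig}_\mathrm{dR}(\mathcal{G})$, giving the asserted isomorphism.

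The main obstacle is the compatibility of $\tau$ with \eqref{Hodge SFD}. Since $u^\unr$ is surjective, the image of $\tau=(u^\unr)^\vee$ is the annihilator of $\ker(u^\unr)$, so it suffices to show that this annihilator is the Hodge sub $\omega_\mathcal{G}^\rig$. I would compute $\ker(u^\univ)$ explicitly from the defining formulas $u_0^\univ(e_{0,0})=zt_0$, $u_0^\univ(e_{0,1})=t_0$ and the analogous ones for $u_1^\univ$, and then compare its annihilator with the Hodge filtration of Example \ref{example2}, which under \eqref{IS2} is given fiberwise by $\ker(\mathrm{ev}_{X-z})$. The delicate point is that these are two descriptions of the same subsheaf coming from genuinely different sources---Teitelbaum's Cartier--Dieudonn\'e construction of $u$ on one side, and the crystalline Hodge filtration transported through Lemma \ref{prop1} and \eqref{IS} on the other---so one must track the $\Z/2\Z$-grading, the $\Z_p[\Pi]$-action on $T^\univ$ and $\eta^\univ$, and the normalization of the bases $\{e_{i,0},e_{i,1}\}$ and $\{t_0,t_1\}$ through every identification to confirm that the two filtrations agree. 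Finally, since all the maps involved are defined on $\mathcal{H}_p^\unr$ or pulled back from the universal data, the fiberwise comparison upgrades to the desired sheaf-level statement by naturality.
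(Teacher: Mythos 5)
Your first two isomorphisms are correct and coincide with the paper's own argument: the first is the tangent/cotangent duality $(T^\unr)^\vee\simeq(\Liecal_{\mathcal{G}})^\vee\simeq\underline\omega_{\mathcal{G}}^\rig$ (using that $T^\unr$ is identified with $T_{\mathcal{G}}$), and the second is exactly Lemma \ref{prop1} combined with \eqref{IS}. The problem is the third assertion, which is where all the content of Proposition \ref{identifications} lies, and there your proposal stops precisely at the point where a proof has to begin. You correctly reduce the claim to showing that the annihilator of $\ker(u^\unr)$ is the Hodge sub $\underline\omega_{\mathcal{G}}^\rig$, and you propose to check this by computing $\ker(u^\univ)$ from Teitelbaum's formulas and comparing with the filtration of Example \ref{example2} transported through \eqref{IS2}. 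But that comparison requires knowing how the basis $\{e_{0,0},e_{0,1}\}$ of $\eta_0^\univ$ is carried to a basis of $\M_2$ (equivalently of $V_1\otimes V_1$) under the chain of identifications of Lemma \ref{prop1} and \eqref{IS}, and Faltings' isomorphism \eqref{IS} is only quoted as an abstract isomorphism of filtered convergent $F$-isocrystals; it is not available in coordinates anywhere in the paper (indeed, its filtration-compatibility already presupposes a statement of the kind you are trying to prove). So the ``delicate point'' you flag --- tracking the grading, the $\Z_p[\Pi]$-action and the normalizations ``through every identification'' --- is not a deferred routine verification: it is the entire statement, and as you have set it up it cannot be carried out from the ingredients you cite.

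The step that closes this gap, and which the paper uses, is the Dieudonn\'e-theoretic \emph{definition} of $u$ rather than its coordinate formula. In the construction underlying Drinfel'd's theorem, for an $\SFD$-module $G$ the map $u_G$ is induced by the projection $\mathbb{N}(G)\rightarrow\mathbb{M}(G)/\mathbb{V}\mathbb{M}(G)=T_G$; equivalently, under the identification \eqref{BC lemma} of $\eta^i(G)\otimes_{\Z_p}\hat{\Q}_p^\unr$ with $\mathbb{M}^i(G)[1/p]$, the map $u_G$ \emph{is} the canonical projection of the Dieudonn\'e module onto the tangent space. Dualizing, $\tau=u^\vee$ becomes the inclusion of $T_G^\vee=\omega_G$ into $\mathbb{D}(G)^\sigma\simeq H^1_\mathrm{cris}(G)$ as the annihilator of $\mathbb{V}\mathbb{M}(G)$, and this is exactly the Hodge-Tate inclusion of \eqref{Hodge SFD} under the crystalline--de Rham comparison used in the proof of Lemma \ref{prop1}. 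With this observation the compatibility is essentially definitional, and no normalization of bases through \eqref{IS} or \eqref{IS2} is needed; the fiberwise statement then globalizes by naturality exactly as you say at the end.
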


\begin{proof}
The first statement follows from the canonical isomorphism 
between $T_\mathcal{G}=\Liecal_{\mathcal{G}}$ 
and $T^\unr$, while the second 
follows from Proposition \ref{prop1} combined with \eqref{IS}. 
For the statement about $\tau$, note that for each $\SFD$-module $G$ 
over $\bar{\mathbb{F}}_p$, 
the map $u_G$ corresponds under the identification  
between $\eta(G)\otimes_{\Z_p}\hat{\Q}_p^\unr$ 
and $\mathbb{M}(G)\otimes_{\hat{\Z}_p^\unr}\hat{\Q}_p^\unr$ to 
the canonical projection $\mathbb{M}(G)/\mathbb{V}_G\mathbb{M}(G)\rightarrow T_G$, where $T_G$ is the tangent space of $G$ at the origin. 
\end{proof} 

\subsection{The action of the idempotent $e$}\label{idempotent section}
Fix an isomorphism $\Q_p(\sqrt{p_0})\simeq\Q_{p^2}$. By means 
of this isomorphism, and the fixed embedding $\Q_{p^2}\hookrightarrow D$,  we may identify elements $a+b\sqrt{p_0}$ in $\Q_p(\sqrt{p_0})$ (where $a,b\in\Q_p$) with elements of $D$ in what follows without explicitly mentioning it. 

\begin{lemma}\label{action on eta}
$e\cdot \left(\eta(\Phi)\otimes_{\Z_p}\Z_{p^2}\right)=\eta^0(\Phi)\otimes_{\Z_p}\Z_{p^2}$ and $e\cdot \left(T(\Phi)\otimes_{\Z_p}\Z_{p^2}\right)= T^0(\Phi)\otimes_{\Z_p}\Z_{p^2}$. 
\end{lemma}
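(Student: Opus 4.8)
The plan is to show that $e$ is nothing but the projector onto the degree-zero part of the $\Z_{p^2}$-graduation, by computing directly how $e$ acts on each graded piece once scalars have been extended to $\Z_{p^2}$.

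First I would unwind the action of $e=\frac12\bigl(\mathbf{1}\otimes1+\frac{1}{p_0}\mathbf{j}\otimes\sqrt{p_0}\bigr)$ on $\eta(\Phi)\otimes_{\Z_p}\Z_{p^2}$ and on $T(\Phi)\otimes_{\Z_p}\Z_{p^2}$. The factor $\mathbf{j}\in\mathcal{R}_{\mathrm{max}}$ operates through the quaternionic action, i.e. through the embedding $\Q_{p^2}\hookrightarrow D$ and the $\mathcal{O}_D$-structure of $\mathbb{M}(\Phi)$ (hence of $\eta(\Phi)$ and $T(\Phi)$), so $\mathbf{j}$ acts as $\iota(\delta)$, where $\delta\in\Z_{p^2}$ is the image of $\sqrt{p_0}$ under the fixed isomorphism $\Q_p(\sqrt{p_0})\simeq\Q_{p^2}$; the factor $\sqrt{p_0}\in\mathcal{O}_M$ operates as the scalar $\delta$ coming from $\Z_{p^2}$. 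I would record the two identities $\delta^2=p_0$ and $\bar\delta=\sigma(\delta)=-\delta$ that drive the whole computation.

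Next I would make explicit that the graduation is the $\Z_{p^2}$-eigenspace decomposition for the quaternionic action. By the defining relations of $\mathbb{M}(\Phi)$ one has $\iota(a)=a$ on $\mathbb{M}^0$ and $\iota(a)=\bar a$ on $\mathbb{M}^1$ for $a\in\Z_{p^2}$, and I would check that this descends to the graded pieces of $\eta(\Phi)$ and of the tangent space $T(\Phi)=\mathbb{M}(\Phi)/\mathbb{V}\mathbb{M}(\Phi)$, using the generators $[g^0,0],[\mathbb{V}(g^1),0]$ of $\eta^0(\Phi)$ and $[g^1,0],[\mathbb{V}(g^0),0]$ of $\eta^1(\Phi)$ together with the compatibility of $\iota$ with $\mathbb{V}$ and $\Pi$ (recall $\Pi a=\bar a\Pi$, so that $\mathbb{V}$ interchanges the two characters). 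The upshot is that, after $\otimes_{\Z_p}\Z_{p^2}$, the operator $\iota(\delta)$ acts as the scalar $+\delta$ on the degree-zero part and as $-\delta$ on the degree-one part, and likewise on $T^0,T^1$. With this in hand the computation is immediate: on the degree-zero part $\frac{1}{p_0}\mathbf{j}\otimes\sqrt{p_0}$ acts as $\frac{1}{p_0}\,\delta\cdot\delta=\delta^2/p_0=1$, so $e=\frac12(1+1)=\mathrm{id}$; on the degree-one part it acts as $\frac{1}{p_0}\,(-\delta)\cdot\delta=-1$, so $e=\frac12(1-1)=0$. Since $\eta(\Phi)\otimes_{\Z_p}\Z_{p^2}=\bigl(\eta^0(\Phi)\oplus\eta^1(\Phi)\bigr)\otimes_{\Z_p}\Z_{p^2}$, this yields $e\cdot(\eta(\Phi)\otimes_{\Z_p}\Z_{p^2})=\eta^0(\Phi)\otimes_{\Z_p}\Z_{p^2}$, and the identical computation with $T^0,T^1$ in place of $\eta^0,\eta^1$ gives the second equality.

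The step I expect to be the main obstacle is the second one: verifying that, after extending scalars to $\Z_{p^2}$, the quaternionic operator $\iota(\delta)$ really acts as the honest scalar $+\delta$ (resp. $-\delta$) on the degree-zero (resp. degree-one) graded piece — equivalently, that the two $\Z_{p^2}$-actions in play (the quaternionic one from $\mathbf{j}$ and the scalar one from $\sqrt{p_0}$) match up on the degree-zero part so that their product is $p_0$ rather than merely having square $p_0^2$. This is exactly the point where the choice of the idempotent $e$ and the identification $\mathbf{j}=\sqrt{p_0}$ are used, and it has to be checked on Teitelbaum's explicit generators of $\eta(\Phi)$ and on the induced basis of $T(\Phi)$.
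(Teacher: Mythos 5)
Your proposal is correct and is essentially the paper's own argument: the paper likewise observes that $a\in\Z_{p^2}\hookrightarrow\mathcal{O}_D$ acts on the degree-zero piece by $a$ and on the degree-one piece by $\bar a$, while $1\otimes a$ acts as the scalar $a$, so that $\tfrac{1}{p_0}\mathbf{j}\otimes\sqrt{p_0}$ acts as $+1$ on the degree-zero part and $-1$ on the degree-one part, making $e$ the projection onto $\eta^0(\Phi)$ (resp. $T^0(\Phi)$). Your extra step of verifying on Teitelbaum's generators that the graduation of $\mathbb{M}(\Phi)$ descends to $\eta(\Phi)$ and $T(\Phi)$ is exactly the detail the paper compresses into ``an immediate calculation,'' so there is no gap.
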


\begin{proof} The action of $\mathcal{O}_D$ on 
$\eta(\Phi)$ 
is induced by duality from the action on $\mathbb{M}(\Phi)$, so  
any element $a\in\Z_{p^2}\hookrightarrow\mathcal{O}_D$ acts on $\eta^0(\Phi)$ by 
multiplication by $a$ and on $\eta^1(\Phi)$ by multiplication by $\bar{a}$. 
On the other hand, the action of $1\otimes a$ on $\eta(\Phi)\otimes_{\Q_p}\Q_{p^2}$ is given by multiplication by $a$. 
An immediate calculation shows then that the action of $e$ is just  the projection $\eta(\Phi)\rightarrow\eta^0(\Phi)$. The argument for $T(\Phi)$ is similar. \end{proof}

Write $\eta_0^\unr=\pi^{-1}_{\mathcal{H}_p^\unr}\eta^\univ_0$, 
$T_0^\unr=\pi_{\mathcal{H}_p^\unr}^*{T}_0^\univ$, 
$u_0^\unr=\pi^{-1}_{\mathcal{H}_p^\unr}{u}_0^\univ$.

\begin{proposition}\label{idempotent} 
$e\cdot \eta^\unr=\eta_0^\unr$ and $e\cdot T^\unr=T_0^\unr$. 
\end{proposition}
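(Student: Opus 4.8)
The plan is to deduce the statement directly from Lemma \ref{action on eta} by transporting it along the identifications that define the universal rigid data, and then pulling back along $\pi_{\mathcal{H}_p^\unr}$.

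First I would recall that, by \eqref{universal eta}, the constant sheaves $\eta^\univ_i$ are by definition $\eta^i(\Phi)\otimes_{\Z_p}\Q_p$, so that the $\Z/2\Z$-graduation $\eta^\univ=\eta^\univ_0\oplus\eta^\univ_1$ is exactly the $\Q_p$-linear extension of the graduation $\eta(\Phi)=\eta^0(\Phi)\oplus\eta^1(\Phi)$ recalled after \cite[Corollary 30]{Tei-univ}. Likewise the $\mathcal{O}_D$-action on $\eta^\univ$ (through the $\Z_p[\Pi]$-module structure together with the graduation, as in the definition of a compatible datum) is the $\Q_p$-linear extension of the $\mathcal{O}_D$-action on $\eta(\Phi)$. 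In particular the $\mathbf{j}$-part of the idempotent $e$ acts on $\eta^\univ$ exactly as it does on $\eta(\Phi)$, while the coefficient $\sqrt{p_0}\in\mathcal{O}_M$ acts through the fixed isomorphism $\Q_p(\sqrt{p_0})\simeq\Q_{p^2}$.

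Second, applying Lemma \ref{action on eta} after extending scalars from $\Z_p$ to $\Z_{p^2}$, equivalently to $\Q_{p^2}$, shows that $e$ acts on $\eta(\Phi)\otimes\Q_{p^2}$ as the projector onto its degree-$0$ summand $\eta^0(\Phi)\otimes\Q_{p^2}$. The key observation is that this projector is nothing but the projection onto the degree-$0$ part for the $\Z/2\Z$-graduation, which is already defined over $\Q_p$; hence, even though $e$ itself only makes sense after adjoining $\sqrt{p_0}$, its action carries the $\Q_p$-rational summand $\eta^\univ$ onto the $\Q_p$-rational summand $\eta^\univ_0$, giving $e\cdot\eta^\univ=\eta^\univ_0$. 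Pulling this back along the projection $\pi_{\mathcal{H}_p^\unr}\colon\mathcal{H}_p^\unr\to\mathcal{H}_p$, and using that $\eta^\unr=\pi^{-1}_{\mathcal{H}_p^\unr}\eta^\univ$ and $\eta_0^\unr=\pi^{-1}_{\mathcal{H}_p^\unr}\eta^\univ_0$, together with the fact that the inverse image functor commutes with the locally constant $\mathcal{O}_D$-action and with the graduation, yields $e\cdot\eta^\unr=\eta_0^\unr$. The statement for $T^\unr$ is proved in exactly the same way: $T^\univ=T^\univ_0\oplus T^\univ_1$ with $T^\univ_i$ corresponding to the graded pieces $T^i(\Phi)$ of the tangent space under the universal identification, the $T$-part of Lemma \ref{action on eta} gives $e\cdot T^\univ=T^\univ_0$, and pulling back along $\pi^*_{\mathcal{H}_p^\unr}$ (now as $\mathcal{O}$-modules, since $T^\unr=\pi^*_{\mathcal{H}_p^\unr}T^\univ$) gives $e\cdot T^\unr=T_0^\unr$.

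The step I expect to be the main obstacle is the bookkeeping of coefficients in the second paragraph: the idempotent $e$ does not literally act on the $\Q_p$-rational sheaf $\eta^\unr$, because it involves the scalar $\sqrt{p_0}\notin\Q_p$. One must check that, once scalars are extended to $\Q_{p^2}$, or, equivalently, once $\eta^\unr$ is viewed inside the $\mathcal{O}_{\mathcal{H}_p^\unr}$-module $\mathcal{H}^{1,\rig}_\mathrm{dR}(\mathcal{G})$ of Proposition \ref{identifications}, whose coefficient ring already contains $\Q_{p^2}$, the resulting endomorphism is precisely the degree-$0$ graded projector and therefore descends to a $\Q_p$-rational idempotent with image exactly $\eta_0^\unr$ (respectively $T_0^\unr$). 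Everything else is a formal transport of Lemma \ref{action on eta} through the definitions of the universal rigid data.
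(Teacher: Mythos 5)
Your proof is correct and follows essentially the same route as the paper: the paper's own proof simply observes that the statement is immediate from Lemma \ref{action on eta} together with the identification \eqref{universal eta}, which is exactly the argument you give. Your extra care about coefficients (that $e$ only acts after extending scalars to $\Q_{p^2}$, but induces the $\Q_p$-rational graded projector onto the degree-zero summand) and about transporting the statement along $\pi^{-1}_{\mathcal{H}_p^\unr}$ and $\pi^{*}_{\mathcal{H}_p^\unr}$ merely makes explicit what the paper leaves implicit.
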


\begin{proof} This is clear from Lemma \ref{action on eta} and \eqref{universal eta}. 
\end{proof}

For $i=0,1$, the sheaf $T_0^\unr$ is a free $\mathcal{O}_{\mathcal{H}_p^\unr}$-module 
of rank $1$, so it is invertible; denote $(T_0^\unr)^\vee$ its $\mathcal{O}_{\mathcal{H}_p^\unr}$-dual. Taking duals we get a map 
$du_0:(T^\unr_0)^\vee\rightarrow(\eta_0\otimes_{\Z_p}\Q_p)^\vee\otimes_{\Q_p}\mathcal{O}_{\mathcal{H}_p^\unr}$ (where 
the RHS denotes $\Q_p$-duals as above). 
We set up the following notation: 
\begin{itemize}
\item $\underline{\omega}_{\mathcal{G}}^0=e\cdot\underline\omega_\mathcal{G}^\rig$; 
\item $\mathcal{L}^0_{\mathcal{G}}=e\cdot\mathcal{H}^{1,\rig}_\mathrm{dR}(\mathcal{G})$. 
\end{itemize}
Applying the idempotent $e$ and using Propositions \ref{identifications}  and \ref{idempotent} 
we then obtain a diagram with exact rows in which the vertical arrows are isomorphisms: 
\begin{equation}\label{diagram 2} 
\xymatrix{
0\ar[r]&
(T^\unr_0)^\vee\ar[r]^-{du_0}\ar[d]^-\simeq&
(\eta_0\otimes_{\Z_p}\Q_p)^\vee\otimes_{\Q_p}\mathcal{O}_{\mathcal{H}_p^\unr}\ar[d]^-\simeq\\
0\ar[r]&
\underline{\omega}_{\mathcal{G}}^0\ar[r]&
\mathcal{L}_{\mathcal{G}}^0}
\end{equation}

\subsection{Differential calculus on the $p$-adic upper half plane}\label{section differential operators} 

We now set up the following notation. Recall that the map $u_0$ takes 
$x_{0,0}e_{0,0}+x_{0,1}e_{0,1}$ to $(zx_{0,0}+x_{0,1})\otimes t_0$; 
dualizing, $du_0$ can be described in coordinates by the map 
which takes the canonical generator 
$t_0$ of the $\mathcal{O}_{\mathcal{H}_p^\unr}$-module
$(T^\unr_0)^\vee$ (satisfying the relation $dt_0(t_0)=1$) 
to the map $x_{0,0}e_{0,0}+x_{0,1}e_{0,1}\mapsto 
zx_{0,0}+x_{0,1}$. If we denote $de_{0,i}$ the dual basis of $e_{0,i}$ 
(satisfying the condition $de_{0,i}(e_{0,j})=\delta_{i,j}$), we may write 
this map as $zde_{0,0}+de_{0,1}$. To simplify the notation, we put 
from now on $\tau=t_0$, $d\tau=dt_0$, 
$x=e_{0,0}$, $y=e_{0,1}$, $dx=de_{0,0}$ and $dy=de_{0,1}$, 
so that the above map reads simply as \[d\tau=zdx+dy.\] 

Let $\mathcal{C}=\mathcal{C}^0(\mathcal{H}_p(\hat{\Q}_{p}^\unr),\C_p)$ denote the $\C_p$-vector space of continuous (for the standard $p$-adic topology on both spaces) $\C_p$-valued functions on ${\mathcal{H}_p(\hat{\Q}_{p}^\unr)}$. Denote 
$\mathcal{A}=H^0(\mathcal{H}_p^\unr,\mathcal{O}_{\mathcal{H}_p^\unr})$ the $\Q_p^\unr$-vector space of global sections of $\mathcal{O}_{\mathcal{H}_p^\unr}$. Each $f\in\mathcal{A}$ is, in particular, continuous on $\mathcal{H}_p^\unr$ for the standard $p$-adic topology of $\Q_p^\unr-\Q_p$, 
and therefore restriction induces a map of $\Q_p^\unr$-vector spaces 
$r:\mathcal{A}\rightarrow\mathcal{C}$. 
Denote $\mathcal{A}^*$ the image of the morphism of $\mathcal{A}$-algebras 
$\mathcal{A}[X,Y]\rightarrow\mathcal{C}$ defined by sending $X$ to the 
function $z\mapsto 1/(z-\sigma(z))$ and $Y$ to the function 
$z\mapsto\sigma(z)$ (note that the function $z\mapsto z-\sigma(z)$ is invertible on 
$\mathcal{H}_p^\unr(\hat{\Q}_p^\unr)$). To simplify the notation, we put from now on 
\[z^\ast=\sigma(z).\]
 
Set up the following notation
(here $n\geq 1$ is an integer)
\begin{itemize}
\item $\Lambda_\mathcal{G}=H^0(\mathcal{H}_p^\unr,\mathcal{L}_\mathcal{G}^0)$ and 
$\Lambda_{\mathcal{G},n}=\Lambda_\mathcal{G}^{\otimes{n}}$,
\item $\Lambda_{\mathcal{G}}^\ast=\Lambda_{\mathcal{G}}\otimes_\mathcal{A}\mathcal{A}^\ast$ and $\Lambda_{\mathcal{G},n}^{\ast}=\left(\Lambda_{\mathcal{G},n}^{\ast}\right)^{\otimes{n}}$,
\item $w_\mathcal{G}=H^0(\mathcal{H}_p^\unr,\underline\omega_\mathcal{G}^0)$ and $w_{\mathcal{G},n}=w_\mathcal{G}^{\otimes{n}}$,
\item ${w}_{\mathcal{G}}^\ast=w_{\mathcal{G}}\otimes_\mathcal{A}\mathcal{A}^\ast$ and $w_{\mathcal{G},n}^{\ast}=\left(w_{\mathcal{G},n}^{\ast}\right)^{\otimes{n}}$.
\end{itemize} 

The $\hat{\Q}_p^\unr$-algebra $\mathcal{A}$ is equipped with the standard 
derivation $\frac{d}{dz}$ on power series. The $\mathcal{A}$-module  
$\Omega^1_\mathcal{A}=H^0(\mathcal{H}_{p}^\unr,\Omega^1_{\mathcal{H}_p^\unr})$
is then one dimensional and generated by $dz$ satisfying $dz\left(\frac{d}{dz}\right)=1$. 
We extend differential operator $\frac{d}{dz}$ 
to a differential operator 
$\frac{\partial}{\partial{z}}:\mathcal{A}^\ast\rightarrow\mathcal{A}^\ast$
by  $\hat{\Q}_p^\unr$-linearity using the product formula 
and setting $\frac{\partial}{\partial{z}}(z^\ast )=0$ and $\frac{\partial}{\partial{z}}\left(\frac{1}{z-z^\ast }\right)=\frac{-1}{(z-z^\ast )^2}$. Similarly, we define a differential operator 
$\frac{\partial}{\partial{z^\ast }}:\mathcal{A}^\ast\rightarrow\mathcal{A}^\ast$
setting $\frac{\partial}{\partial{z^\ast }}(z)=0$, 
$\frac{\partial}{\partial{z^\ast }}(z^\ast )=1$  
and $\frac{\partial}{\partial{z}}\left(\frac{1}{z-z^\ast }\right)=\frac{1}{(z-z^\ast )^2}$. 
Define $\Omega^{1}_{\mathcal{A}^\ast}$ to be the
$\mathcal{A}^\ast$-subalgebra of the algebra of derivations  
generated by $dz$ and $dz^\ast$ satisfying the usual rules 
$dz\left(\frac{\partial}{\partial z}\right)=1$, $dz\left(\frac{\partial}{\partial z^\ast}\right)=0$,
$dz^\ast\left(\frac{\partial}{\partial z^\ast}\right)=0$, $dz^\ast\left(\frac{\partial}{\partial z^\ast}\right)=1$.

\subsection{Splitting of the rigid analytic Hodge filtration}\label{splitting section}
Recall the notation fixed before for the differential form  
$d\tau=zdx+dy$. Define 
\[d\tau^\ast=z^\ast dx+dy.\] Then $d\tau^\ast$ belongs to $w_\mathcal{G}^\ast$. 
Taking global sections, restricting to $\hat{\Q}_{p}^\unr$, 
and extending linearly with $\mathcal{A}^\ast$ we obtain 
a short exact sequence of $\mathcal{A}^\ast$-algebras 
\begin{equation}\label{Hodge p-adic} 
0\longrightarrow{w}_{\mathcal{G}}^\ast\longrightarrow
\Lambda_{\mathcal{G}}^\ast.\end{equation}

\begin{theorem}\label{splitting theorem}
The exact sequence \eqref{Hodge p-adic} 
admits a canonical splitting $\Psi_p:\Lambda_{\mathcal{G}}^\ast\rightarrow
{w}_{\mathcal{G}}^\ast$.\end{theorem}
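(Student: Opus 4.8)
The plan is to reduce \eqref{Hodge p-adic} to an utterly explicit inclusion of free $\mathcal{A}^\ast$-modules and then read off the splitting as a coordinate projection. First I would apply Proposition \ref{idempotent} and diagram \eqref{diagram 2} (which rests on Proposition \ref{identifications}, and hence on Lemma \ref{prop1} and the isomorphism \eqref{IS}) to identify
\[\mathcal{L}_\mathcal{G}^0\simeq\left(\eta_0\otimes_{\Z_p}\Q_p\right)^\vee\otimes_{\Q_p}\mathcal{O}_{\mathcal{H}_p^\unr},\]
which is free of rank two over $\mathcal{O}_{\mathcal{H}_p^\unr}$ on the constant sections $dx,dy$ dual to the basis $x=e_{0,0}$, $y=e_{0,1}$ of $\eta_0^\univ$ fixed in \eqref{universal eta}. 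Taking global sections over $\mathcal{H}_p^\unr$ and tensoring with $\mathcal{A}^\ast$ over $\mathcal{A}$ gives $\Lambda_\mathcal{G}^\ast=\mathcal{A}^\ast dx\oplus\mathcal{A}^\ast dy$, free of rank two. Under the same identifications $\underline\omega_\mathcal{G}^0$ is the line generated by the image of $d\tau=dt_0$ under $du_0$, that is $d\tau=z\,dx+dy$; hence $w_\mathcal{G}^\ast=\mathcal{A}^\ast\cdot d\tau$, embedded in $\Lambda_\mathcal{G}^\ast$ precisely by the inclusion \eqref{Hodge p-adic}.

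The heart of the matter is that enlarging $\mathcal{A}$ to $\mathcal{A}^\ast$ produces a second, canonical generator. I would introduce $d\tau^\ast=z^\ast dx+dy$, which lies in $\Lambda_\mathcal{G}^\ast$ since $z^\ast=\sigma(z)\in\mathcal{A}^\ast$, and note that the change-of-basis matrix carrying $\{dx,dy\}$ to $\{d\tau,d\tau^\ast\}$ has determinant $z-z^\ast$. By the very definition of $\mathcal{A}^\ast$ this function is invertible, so $\{d\tau,d\tau^\ast\}$ is again an $\mathcal{A}^\ast$-basis of $\Lambda_\mathcal{G}^\ast$; explicitly $dx=(d\tau-d\tau^\ast)/(z-z^\ast)$ and $dy=(z\,d\tau^\ast-z^\ast d\tau)/(z-z^\ast)$. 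This yields the direct sum decomposition
\[\Lambda_\mathcal{G}^\ast=w_\mathcal{G}^\ast\oplus\mathcal{A}^\ast\cdot d\tau^\ast.\]

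I would then define $\Psi_p\colon\Lambda_\mathcal{G}^\ast\to w_\mathcal{G}^\ast$ to be the projection onto the first summand, i.e.\ $\Psi_p(a\,d\tau+b\,d\tau^\ast)=a\,d\tau$ for $a,b\in\mathcal{A}^\ast$. Since the elements of $w_\mathcal{G}^\ast$ are exactly those with $b=0$, the composite $w_\mathcal{G}^\ast\hookrightarrow\Lambda_\mathcal{G}^\ast\to w_\mathcal{G}^\ast$ (second arrow $\Psi_p$) is the identity, so $\Psi_p$ splits \eqref{Hodge p-adic}. Canonicity holds because no choices intervene beyond the canonical data already fixed: the basis $dx,dy$ is that of the universal rigid data, and $d\tau^\ast$ is obtained from $d\tau$ by replacing the coordinate $z$ with its Frobenius conjugate $z^\ast=\sigma(z)$. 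Thus the complementary line $\mathcal{A}^\ast\cdot d\tau^\ast$ is the exact $p$-adic analogue of the antiholomorphic line $\underline{\bar\omega}_\mathrm{ra}$ in the real-analytic splitting \eqref{intro real analytic splitting}, with complex conjugation replaced by $\sigma$.

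The linear algebra here is routine, and I expect the genuine difficulty to lie entirely upstream. The Hodge sequence $0\to\underline\omega_\mathcal{G}^0\to\mathcal{L}_\mathcal{G}^0$ admits no splitting over $\mathcal{A}$ itself---this is precisely the failure of a rigid-analytic splitting of the Hodge filtration---so the essential point is to recognize $\mathcal{A}^\ast$ as the minimal enlargement of $\mathcal{A}$ in which the Frobenius-conjugate section $d\tau^\ast$ becomes available and the discriminant $z-z^\ast$ becomes a unit. Securing the explicit free-module descriptions of $\Lambda_\mathcal{G}^\ast$ and $w_\mathcal{G}^\ast$, which depends on all of Lemma \ref{prop1}, \eqref{IS}, and Propositions \ref{identifications} and \ref{idempotent}, is therefore the step carrying the weight; once it is in place, the splitting is forced.
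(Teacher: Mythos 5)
Your proposal is correct and follows essentially the same route as the paper's own proof: both exploit the explicit coordinates $dx,dy$, the generator $d\tau=z\,dx+dy$ of $w_\mathcal{G}^\ast$, the Frobenius-conjugate section $d\tau^\ast=z^\ast dx+dy$, and the invertibility of $z-z^\ast$ in $\mathcal{A}^\ast$ to pass to the basis $\{d\tau,d\tau^\ast\}$ and project onto the $d\tau$-component. The paper simply carries out the change of basis explicitly, writing $\omega=f\,dx+g\,dy$ as $\left(\frac{f-gz^\ast}{z-z^\ast}\right)d\tau+\left(\frac{zg-f}{z-z^\ast}\right)d\tau^\ast$ and defining $\Psi_p(\omega)=\left(\frac{f-gz^\ast}{z-z^\ast}\right)d\tau$, which is exactly your coordinate projection.
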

 
\begin{proof} 
We have
\[dx=\frac{d\tau-d\tau^\ast}{z-z^\ast },\qquad
dy= \frac{zd\tau^\ast-z^\ast d\tau}{z-z^\ast}.\] 
We may therefore write any differential form $\omega=f(z)dx+g(z)dy$ with $f,g\in\mathcal{A}^*$ as 
\[\omega=\left(\frac{f(z)-g(z)z^\ast }{z-z^\ast }\right)d\tau+d\tau^\ast\left(\frac{zg(z)-f(z)}{z-z^\ast }\right).\]
One then defines the sough-for splitting sending 
$\omega\mapsto\left(\frac{f(z)-g(z)z^\ast }{z-z^\ast }\right)d\tau$. 
\end{proof}


\subsection{The $p$-adic Shimura-Maass operator} 
Taking global section, the Gauss-Manin connection gives rise to a map 
$\nabla^\rig_\mathcal{G}:\Lambda_\mathcal{G}\rightarrow \Lambda_\mathcal{G}\otimes\Omega^1_\mathcal{A}$. We extend $\nabla_\mathcal{G}^\rig$ to a map 
$\nabla_\mathcal{G}^\ast:\Lambda_\mathcal{G}^\ast\rightarrow \Lambda_\mathcal{G}^\ast\otimes\Omega^1_{\mathcal{A}^\ast}$
as follows. First define   
$
\nabla_\mathcal{G}^{1,0}:\Lambda_\mathcal{G}^\ast\rightarrow \Lambda_\mathcal{G}^\ast\otimes\Omega^1_{\mathcal{A}^\ast}
$
to be the derivation satisfying the rules 
$\nabla^{1,0}_\mathcal{G}(d\tau)=dx\otimes dz$, 
$\nabla^{1,0}_\mathcal{G}(d\tau^\ast)=0$,
$\nabla^{1,0}_\mathcal{G}(z)=1$,
$\nabla^{1,0}_\mathcal{G}(z^\ast)=0$. 
Define similarly the derivation  
$
\nabla_\mathcal{G}^{0,1}:\Lambda_\mathcal{G}^\ast\rightarrow \Lambda_\mathcal{G}^\ast\otimes\Omega^1_{\mathcal{A}^\ast}
$
by the rules
$\nabla^{0,1}_\mathcal{G}(d\tau)=0$,
$\nabla^{0,1}_\mathcal{G}(d\tau^\ast)=dx\otimes dz^\ast$,
$\nabla^{0,1}_\mathcal{G}(z)=0$,
$\nabla^{0,1}_\mathcal{G}(\bar z)=dz^\ast$. 
We finally define 
\[\nabla^\ast_\mathcal{G}=\nabla^{1,0}_\mathcal{G}+\nabla^{0,1}_\mathcal{G}:\Lambda_\mathcal{G}^\ast\longrightarrow \Lambda_\mathcal{G}^\ast\otimes\Omega^1_{\mathcal{A}^\ast}
.\]
%
%

Taking global sections, the Kodaira-Spencer map gives rise to a map 
$\mathrm{KS}_\mathcal{G}:w_\mathcal{G}^{\otimes{2}}\rightarrow\Omega^1_\mathcal{A}$, which we extend $\mathcal{A}^\ast$-linearly 
to a map
\[\mathrm{KS}_\mathcal{G}^\ast:(w_\mathcal{G}^\ast)^{\otimes{2}}
\longrightarrow\Omega^{1}_{\mathcal{A}^\ast}.\] 

Note that \begin{equation}\label{GM}
\nabla_\mathcal{G}^\rig(d\tau)=\nabla_\mathcal{G}^\ast(d\tau)=
\frac{d\tau-d{\tau}^\ast}{z-z^\ast}\otimes dz.\end{equation}
and, since $\nabla^\ast_\mathcal{G}(z^\ast)=0$, we have 
\begin{equation}\label{horizontal} 
\nabla^\ast_\mathcal{G}(d\tau^\ast)=0.\end{equation}
In particular, if $f(z)\otimes d\tau\in w_\mathcal{G}^\ast$ 
we have 
\begin{equation}\label{formula 1}
\nabla_\mathcal{G}^\rig\left(f(z)\otimes d\tau\right)=
\left(\frac{\partial}{\partial z}f(z)\otimes d\tau+f(z)\otimes\frac{d\tau-d\tau^*}{z-z^\ast}\right)\otimes dz.\end{equation} 

Taking global sections, we can form the pairing
$\langle,\rangle_\mathcal{G}^\rig:\Lambda_\mathcal{G}\otimes_{\mathcal{A}}
\Lambda_\mathcal{G}\rightarrow\mathcal{A}$. 
Extending linearly by $\mathcal{A}^\ast$, we obtain a new pairing 
\[\langle,\rangle_\mathcal{G}^\ast:\Lambda_\mathcal{G}^\ast\otimes_{\mathcal{A}^\ast}
\Lambda_\mathcal{G}^\ast\longrightarrow\mathcal{A}^\ast.\]
Using the description of the Kodaira-Spencer map in the end of
\S\ref{universal SFD subsection}, we see that 
\[
\langle d\tau,\nabla_\mathcal{G}^\rig(d\tau)\rangle_\mathcal{G}^\rig
= \langle d\tau,\nabla_\mathcal{G}^\ast(d\tau)\rangle_\mathcal{G}^\ast
=
\frac{-\langle d\tau,d\tau^\ast\rangle_{\mathcal{G}}^\ast}{z-z^\ast} dz
=-\langle dx,dy\rangle_\mathcal{G}^\ast dz
\]where for the second equality we use \eqref{GM}, while the last 
equality easily from the equality  
$\langle zdx+dy,z^\ast dx+dy\rangle_{\mathcal{G}}^
\ast=(z-z^\ast)\langle dx,dy\rangle_\mathcal{G}^\ast$. Therefore 
\[\mathrm{KS}_\mathcal{G}^\rig(d\tau\otimes d\tau)=-\langle dx,dy\rangle_\mathcal{G}^\rig dz.\]
So, to compute 
$\mathrm{KS}_\mathcal{G}^\ast(d\tau\otimes d\tau)=\mathrm{KS}_\mathcal{G}^\rig(d\tau\otimes d\tau)$ we are reduced to compute 
$\langle dx,dy\rangle_\mathcal{G}^\rig$. 
For this, we switch to de Rham homology and follow the computations in \cite{Mori}, \cite{Brooks}. 

To begin with, let $W$ denote the order $\mathcal{O}_D$ viewed as free left $\mathcal{O}_D$-module of rank 1; then $W\simeq\mathcal{R}_\mathrm{max}\otimes_{\Z}\Z_p$.  
By \cite[Ch. III, Lemma 1.9]{BC}, the collection of bilinear skew-symmetric maps $\psi:W\times W\rightarrow\Z_p$ which satisfy $\psi(dx,y)=\psi(x,d^\dagger y)$ (for all $x,y\in W$ and $d\in\mathcal{O}_D$) is a free $\Z_p$-module of rank $1$,  
and every generator $\psi_0$ of this $\Z_p$-module is a perfect duality on $W$; the pairing 
\[\psi_0(x,y)=\frac{\tr(\mathbf{i} y^\dagger x)}{p}\] is such a generator, which we fix once and for all
(recall the notation introduced in \S\ref{quaternion algebras} and \S\ref{algebraic de Rham} for $\mathbf{i}$ and $d^\dagger$). 

Recall that $H^1_\mathrm{cris}(\Phi)$ is 
a free $D\otimes_{\Q_p}\hat{\Q}_p^\unr$-module of rank $1$ 
(\emph{cf.} \cite[page 354]{IS}); the structure of $D\otimes_{\Q_p}\hat{\Q}_p^\unr$-module 
is induced from the $D$-module structure of $(\eta(\Phi)\otimes_{\Z_p}\Q_p)^\vee$ via
the isomorphisms $(\eta(\Phi)\otimes_{\Z_p}\Q_p)^\vee\simeq V_\mathrm{cris}(\Phi)$ 
and $H^1_\mathrm{cris}(\Phi)\simeq V_\mathrm{cris}(\Phi)\otimes_{\Q_p}\hat{\Q}_p^\unr$ 
in Lemma \ref{prop1}. We have then from Lemma \ref{prop1} 
then canonical isomorphisms
of convergent $F$-isocrystals:  
\begin{equation}\label{isocristals} 
\begin{split}
\mathcal{H}^1_\mathrm{dR}(\mathcal{G}/\hat{\mathcal{H}}_p)&\simeq 
H^1_\mathrm{cris}(\Phi)\otimes_{\hat{\Q}_{p}^\unr}\mathcal{E}(\mathcal{O}_{\hat{\mathcal{H}}_p^\unr})\\ 
&\simeq \left(D\otimes_{\Q_p}\hat{\Q}_p^\unr\right)\otimes_{\hat{\Q}_{p}^\unr}\mathcal{E}(\mathcal{O}_{\hat{\mathcal{H}}_p^\unr})\\ 
&\simeq \left(D\otimes_{\Q_p}{\Q}_{p^2}\right)\otimes_{{\Q}_{p^2}}\mathcal{E}(\mathcal{O}_{\hat{\mathcal{H}}_p^\unr})\\ 
&\simeq \left(D\otimes_{\Q_p}\hat{\Q}_p^\unr\right)\otimes_{\hat{\Q}_{p}^\unr}\mathcal{E}(\mathcal{O}_{\hat{\mathcal{H}}_p^\unr})\\ 
&\simeq \M_2(\Q_{p^2})\otimes_{{\Q}_{p^2}}\mathcal{E}(\mathcal{O}_{\hat{\mathcal{H}}_p^\unr}).
\end{split}
\end{equation}
Let $\psi_0$ denote the $\hat{\Q}_{p}^\unr$-linear extension of $\psi_0$; under the isomorphism \eqref{isocristals}, $\psi_0$ defines a pairing $H^1_\mathrm{cris}(\Phi)\times H^1_\mathrm{cris}(\Phi)\rightarrow\hat{\Q}_p^\unr$ still denoted by $\psi_0$.  
%
If we still denote $\langle,\rangle_{\mathcal{G}}^\rig$ the restriction of 
$\langle,\rangle_\mathcal{G}^\rig$ to $H^1_\mathrm{cris}(\Phi)$, 
it follows from the unicity of $\psi_0$ up to constant that 
there exists an element $t_p\in \C_p^\times$ such that 
\begin{equation}\label{pairings}
\langle,\rangle_{\mathcal{G},W}^\rig=\frac{1}{t_p}\cdot\psi_0.\end{equation}
%
Moreover, under the isomorphism \eqref{isocristals}, the element 
$d\tau=zdx+dy$ of $\mathcal{H}^1_\mathrm{dR}(\mathcal{G})$ 
corresponds to the element 
$e_1\otimes{z}+e_2\otimes1$, where $e_1=\smallmat 1001$, $e_2=\smallmat 0100$, $e_3=\smallmat 0010$, 
$e_4=\smallmat 0001$ is the standard basis of $\M_2(\Q_{p^2})$. We therefore obtain the sought-for  recipe to compute the Kodaira-Spencer image of $d\tau\otimes d\tau$ in 
terms of $\psi_0$:  
\begin{equation}\label{KS}\mathrm{KS}_\mathcal{G}^\rig(d\tau\otimes d\tau)= 
\frac{1}{t_p}\cdot\psi_0(e_1\otimes z,e_2\otimes 1).\end{equation}

\begin{remark}
The number $t_p$ 
may be viewed as the $p$-adic analogue of the complex 
period $2\pi i$, relating de Rham cohomology with homology 
(\cite[(2.7)]{Mori}, \cite[p. 4197]{Brooks}). 
This explains why we prefer to keep $t_p$ at the denominator in \eqref{pairings}. 
\end{remark} 

We now make more explicit the equations \eqref{GM} and \eqref{KS} 
using Hashimoto basis. For this part, we follow closely the nice calculations in 
\cite[Prop. 2.3]{Mori}, to which 
the reader is referred to for details. 
Recall the Haschimoto basis $\{\mathbf{1},\mathbf{i},\mathbf{j},\mathbf{k}\}$ in 
\S\ref{quaternion algebras}.  
As in \cite[(2)]{Hashimoto}, define 
$\epsilon_1=\mathbf{1}$, $\epsilon_2=(\mathbf{1}+\mathbf{j})/2$, 
$\epsilon_3=(\mathbf{i}+\mathbf{i}\mathbf{j})/2$, $\epsilon_4=(apN^-\mathbf{j}+\mathbf{i}\mathbf{j})/{p_0}$ and use these elements to define 
a symplectic basis of $W$ 
with respect to the pairing $\psi_0$ as in \cite[(5)]{Hashimoto} by 
$\eta_1=\epsilon_3-\frac{p_0-1}{2}\epsilon_4$, 
$\eta_2=-aD\epsilon_1-\epsilon_4$, 
$\eta_3=\epsilon_1$, 
$\eta_4=\epsilon_2$ (note that $\psi_0$ 
we consider above is equal to the pairing $(x,y)\mapsto \tr(xiy^\dagger)$ in 
\cite[(3)]{Hashimoto}). 
Denote $\eta_1^\vee,\eta_2^\vee,\eta_3^\vee,\eta_4^\vee$ 
the dual basis of $W^\vee$, and let $\underline{\eta}^\vee$ be the column vector 
with entries  $\eta_1^\vee,\eta_2^\vee,\eta_3^\vee,\eta_4^\vee$. 
The elements $\eta_i^\vee$ give rise to elements of 
$\mathcal{H}^1_{\mathrm{dR}}(\mathcal{G})$, 
denoted with the same symbol, which are horizontal 
with respect to $\nabla^\rig_\mathcal{G}$, namely $\nabla_\mathcal{G}^\rig(\eta_i^\vee)=0$. 
Write 
$d\tau=\Pi(z)\cdot\underline{\eta}^\vee$. 
A simple calculation 
shows that 
\begin{equation}\label{Pi}
\Pi(z)= \left({\frac{\alpha^-}{2\sqrt{p_0}}(\alpha^+a\Delta z+1)}, 
{\frac{-1}{\sqrt{p_0}}(\alpha^+a\Delta z+1)},z,{\frac{1}{2}\alpha^+z}\right).
\end{equation}
Since $\eta_i^\vee$ are horizontal sections 
of $\nabla_\mathcal{G}^\rig$, using 
\eqref{Pi} to calcolate $\frac{d\Pi(z)}{dz}$ shows that \eqref{GM} becomes  
\begin{equation}\label{GM explicit}
\nabla_\mathcal{G}^\rig(d\tau)= 
\left({\frac{\alpha^-\alpha^+a\Delta}{2\sqrt{p_0}}}, 
{\frac{-\alpha^+a\Delta z}{\sqrt{p_0}}},1,{\frac{1}{2}\alpha^+}\right)\cdot\underline\eta^\vee\otimes dz.\end{equation}
The recipe \eqref{KSdual} to compute the Kodaira-Spencer map
combined with \eqref{pairings} and \eqref{GM explicit} gives then  
\begin{equation}\label{formula 2}
\mathrm{KS}_\mathcal{G}^\rig(d\tau\otimes d\tau)=
\frac{1}{t_p}\frac{d\Pi(z)}{dz}\begin{pmatrix}0&I_2\\-I_2&0
\end{pmatrix}
\Pi(z)^Tdz = \frac{1}{t_p}dz. 
\end{equation}
In particular, \eqref{formula 2} shows that $\mathrm{KS}_\mathcal{G}^\rig$ is an isomorphism, and therefore the same is true for
$\mathrm{KS}_\mathcal{G}^\ast$. This allows us to define the operator 
\[\xymatrix{
\tilde\nabla_{\mathcal{G},n}^\ast: \Lambda_{\mathcal{G},n}^\ast
\ar[r]^-{\nabla_{\mathcal{G},n}^\ast}&
\Lambda_{\mathcal{G},n}^\ast\otimes_{\mathcal{A}^\ast}\Omega^1_{\mathcal{A}^\ast}
\ar[r]^-{(\mathrm{KS}_\mathcal{G}^\ast)^{-1}}&
\Lambda_{\mathcal{G},n}^\ast\otimes_{\mathcal{A}^\ast}w_{\mathcal{G},2}^\ast\ar[r]^-{} & \Lambda_{\mathcal{G},n}^\ast\otimes_{\mathcal{A}^\ast}
\Lambda_{\mathcal{G},2}^\ast\ar[r]^-{} & \Lambda_{\mathcal{G},n+2}^\ast
}
\] where $\nabla_{\mathcal{G},n}^\ast$ is obtained from 
$\nabla_\mathcal{G}^\ast$ using the Leibniz rule (as before, see 
for example \cite[\S 3.2]{Brooks}). The splitting in Theorem \ref{splitting theorem} induces 
a morphism of $\mathcal{A}^\ast$-modules
$\Psi_{p,n}^\ast:\Lambda_{\mathcal{G},n}^{\ast}\rightarrow
w^{\ast}_{\mathcal{G},n}$. 
The composition 
\[
\xymatrix{\Theta_{p,n}^\ast:
w_{\mathcal{G},n}^\ast\ar[r]&
\Lambda_{\mathcal{G},n}^\ast\ar[r]^-{\tilde\nabla_{\mathcal{G},n}^\ast}&
\Lambda_{\mathcal{G},n+2}^\ast\ar[r]^-{\Psi_{p,n+2}^\ast}&
w^{\ast}_{\mathcal{G},n+2}
}
\]
is the \emph{$p$-adic Shimura-Maass operator}. 
We also need to consider iterates of this operator. Define 
\[\tilde\nabla_n^{j,\ast}=\tilde\nabla_{n+2j}^\ast\circ\dots\circ\tilde\nabla_{n+2}^\ast\circ\tilde\nabla_n^\ast.
\] Define then 
\begin{equation}\label{definition Theta j}
\xymatrix{\Theta_{p,n}^{j,\ast}:
w_{\mathcal{G},n}^\ast\ar[r]&
\Lambda_{\mathcal{G},n}^\ast\ar[r]^-{\tilde\nabla_n^{j,\ast}}&
\Lambda_{\mathcal{G},n+2j}^\ast\ar[r]^-{\Psi_{p,n+2j}^\ast}&
w^{\ast}_{\mathcal{G},n+2j}
}\end{equation} 
where the morphism of $\mathcal{A}^\ast$-modules
$\Psi_{p,n+2j}^\ast:\Lambda_{\mathcal{G},n+2j}^{\ast}\rightarrow
w^{\ast}_{\mathcal{G},n+2j}$ is induced as above by 
the splitting in Theorem \ref{splitting theorem}. We call  
$\Theta_{p,n}^{\ast,j}$ the  
$j$-th iterate of the $p$-adic Shimura-Maass operator. 

The work accomplished so far allows us to explicitly describe  
$\Theta_{p,k}^{\ast,j}$. We first introduces some differential operators, similar 
in shape to the Shimura-Maass operator in the real analytic setting.  
For each integer $k\geq 0$, we may then define the $\hat{\Q}_p^\unr$-linear 
function 
\begin{equation}\label{p-adic Shimura-Maass}
\delta_{p,k}=\frac{\partial}{\partial{z}}+\frac{k}{z-z^\ast}:
\mathcal{A}^\ast\longrightarrow \mathcal{A}^\ast.\end{equation}
For each integer $j\geq 0$ we get a map 
$\delta_{p,k}^j:\mathcal{A}^\ast\rightarrow\mathcal{A}^\ast$ defined by 
\[\delta_{p,k}^j=\delta_{p,k+2(j-1)}\circ\dots\circ\delta_{p,k+2}\circ\delta_{p,k}.\]
We call $\delta_{p,k}$ the \emph{Shimura-Maass} operator, 
and $\delta_{p,k}^j$ its $j$-th iteration. 
%
%
Applying \eqref{formula 2} to compute the inverse of the Kodaira-Spencer map
to \eqref{formula 1}, we obtain 
\[\tilde\nabla_k^\ast\left(f(z)\otimes d\tau^{\otimes{k}}\right)=
\frac{1}{t_p}\cdot \left(\frac{\partial}{\partial z}f(z)\otimes d\tau^{\otimes{k+2}}+
kf(z)\otimes\frac{d\tau-d\sigma(\tau)}{z-\sigma(z)}\otimes d\tau^{\otimes{k+1}}\right).\] 
Applying the splitting ${\Psi}_{p,k+2}^\ast$ of the Hodge filtration which annihilate $d\tau^\ast$, we finally obtain 
\begin{equation}\label{Shimura-Maass explicit formula}
\Theta_{p,k}^\ast\left(f(z)\otimes d\tau^{\otimes{k}}\right)=
\frac{1}{t_p}\cdot \left(\frac{\partial}{\partial z}f(z) +
\frac{kf(z)}{z-\sigma(z)}\right)\otimes d\tau^{\otimes{k+2}}=
\frac{1}{t_p}\cdot\delta_{p,k}(f)\otimes d\tau^{\otimes{k+2}}.\end{equation}
Iterating \eqref{Shimura-Maass explicit formula} we obtain  
\begin{equation}\label{Shimura-Maass explicit formula iterates}
\Theta_{p,k}^{j,\ast}\left(f(z)\otimes d\tau^{\otimes{k}}\right)=\left(\frac{1}{t_p}\right)^j\delta_{p,k}^j\left(f(z)\right)\otimes d\tau^{\otimes{k+2j}}.\end{equation}

\section{The $p$-adic Shimura-Maass operator on Shimura curves} 

The aim of this Section is to apply the results in Section \ref{Drinfelderia} 
to define a $p$-adic Shimura-Maass operator on Shimura curves. 

\subsection{$p$-adic uniformization of Shimura curves}  

In this subsection we review the Cerednik-Drinfel'd Theorem.
Let $B/\Q$ be the quaternion algebra obtained from $\mathcal{B}$ by 
interchanging the invariants at $\infty$ and $p$; so $B$ is the definite 
quaternion algebra over $\Q$ of discriminant $N^-$. For a subgroup $U\subseteq\hat{B}^\times$, let
$U^{(p)}$ the elements outside of the place $p$. 
Fix isomorphisms $\mathcal{B}_\ell\simeq B_\ell$ for all primes $\ell\neq p$, so that we can view  $V_1(N^+)^{(p)}$ as a subgroup of $(\hat{B}^\times)^{(p)}$.
Define $\tilde\Gamma_p=B^\times\cap V_1(N^+)^{(p)}$. We still denote $\tilde\Gamma_p$ the image of $\tilde{\Gamma}_p$ in 
$\GL_2(\Q_p)$ via a fixed isomorphism $i_p:B\otimes_\Q\Q_p\simeq\M_2(\Q_p)$, 
and we let ${\Gamma}_p$ denote the subgroup of $\tilde\Gamma_p$ 
consisting of elements whose determinant has even $p$-power order. 

Base changing from $\Z_p$ to the valuation ring $\Z_{p^2}$ of $\Q_{p^2}$ 
gives a $\Z_{p^2}$-formal scheme $\hat{\mathcal{H}}_{p^2}$, whose 
generic fiber $\mathcal{H}_{p^2}$ is the base change of the $\Q_p$-rigid analytic 
space $\mathcal{H}_p$ to $\Q_{p^2}$. 
The group $\GL_2(\Q_p)$ acts on the $\Z_p$-formal scheme $\hatHp$ 
(\cite[Chapitre I, \S 6]{BC}) and acts on $\Spf({\Z}_{p^2})$ via the inverse of the arithmetic Frobenius 
raised to the determinant map (\cite[Chapitre II, \S 9]{BC}). 
Therefore, the group $\GL_2(\Q_p)$ also acts on the 
$\Z_{p^2}$-formal scheme $\hat{\mathcal{H}}_{p^2}$ and 
the
$\hat{\Z}_{p}^\unr$-formal scheme $\mathcal{\mathcal{H}}_{p}^\unr$, 
and the associated rigid analytic spaces.  
We may then form the quotient $\Gamma_p\backslash{\hat{\mathcal{H}}}_{p^2}$, 
in the category of $\Z_{p^2}$-formal schemes, and the quotient
$\Gamma_p\backslash{\hat{\mathcal{H}}}_{p}^\unr$, 
in the category of $\hat{\Z}_{p}^\unr$-formal schemes, and similarly 
for the associated rigid analytic spaces. 
The formal completion $\hat{\mathcal{A}}_{\Z_{p^2}}$ of the universal abelian variety 
$\mathcal{A}_{\Z_{p^2}}$ over $\mathcal{C}_{\Z_{p^2}}$ along its special fiber 
is a $\SFD$-module over the formal completion $\hat{\mathcal{C}}_{\Z_{p^2}}$ of
$\mathcal{C}_{\Z_{p^2}}$ along its special fiber. We may base change $\hat{\mathcal{A}}_{\Z_{p^2}}$ and $\hat{\mathcal{C}}_{\Z_{p^2}}$ to 
$\hat{\Z}_{p^2}$ obtaining a $\SFD$-module  $\hat{\mathcal{A}}_{\hat{\Z}_{p}^\unr}$
over the formal scheme $\mathcal{C}_{\hat{\Z}_{p}^\unr}$; of course,  $\hat{\mathcal{A}}_{\hat{\Z}_{p}^\unr}$ is the completion of $\mathcal{C}_{\hat{\Z}_{p}^\unr}$ along its special fiber. 
The Cerednik-Drinfel'd Theorem (\cite{Dr}, \cite[Th\'eor\`eme 5.3]{BC}) 
states the existence of an isomorphism of $\hat{\Z}_{p}^\unr$-formal schemes 
$\Gamma_p\backslash{\mathcal{H}}^\unr_{p}
\simeq
{\mathcal{C}}_{\hat{\Z}_{p}^\unr}^\rig$
which induced an isomorphism of $\hat{\Z}_p^\unr$-formal schemes
$\Gamma_p\backslash{\mathcal{G}}
\simeq
{\mathcal{A}}_{\hat{\Z}_{p}^\unr}^\rig$ on the universal objects.  
Under our assumptions, there is an isomorphism of $\Z_{p^2}$-formal schemes 
$\Gamma_p\backslash{\mathcal{H}}_{p}^\unr\simeq
\Gamma_p\backslash{\mathcal{H}}_{p^2}$ (\cite[\S3.5.3]{BC}, \cite[Theorem 4.3']{JL})
from which we deduce an isomorphism of $\Z_{p^2}$-formal schemes 
$\Gamma_p\backslash{\hat{\mathcal{H}}}_{p^2}
\simeq
{\hat{\mathcal{C}}}_{ {\Z}_{p^2}}
$
which induces an isomorphism of $\Z_{p^2}$-formal schemes, equivariant for 
the quaternionic actions on both sides, 
$
\Gamma_p\backslash{\mathcal{G}}
\simeq
{\mathcal{A}}_{ {\Z}_{p^2}}.$
%
%
%

Denote $(X,\mathcal{O}_X)\leadsto (X^\rig,\mathcal{O}_X^\rig)$ 
the rigidification functor which takes a proper scheme over a complete extension 
$F$ of $\Q_p$ to  
to its associated rigid analytic space over $F$ (\cite[\S5.4]{Bosch}).
For each coherent sheaf $\mathcal{F}$ of $\mathcal{O}_X$-modules on $X$, we also denote $\mathcal{F}^\rig$ 
the rigidification of $\mathcal{F}$, and for each morphism 
$\varphi:\mathcal{F}\rightarrow\mathcal{G}$ of $\mathcal{O}_X$-modules, we let $\varphi^\rig:\mathcal{F}^\rig\rightarrow\mathcal{G}^\rig$ the corresponding morphism of rigid analytic sheaves (\cite[\S6]{Bosch}).
We have a rigid version of GAGA stating that $\mathcal{F}\leadsto\mathcal{F}^\rig$ is an equivalence of categories between coherent $\mathcal{O}_X$-modules 
and coherent $\mathcal{O}_X^\rig$-modules; we refer to 
\cite[\S 6.3, Theorems 11, 12, 13]{Bosch}, or \cite{Ursula-Kopf} for details. Moreover, if 
$X$ is a proper $\mathcal{O}_F$-scheme, where $\mathcal{O}_F$ is the valuation ring of $F$, 
the generic fiber of the formal completion $\hat{X}$ of $X$ along its special fiber 
coincides with $X_F^\rig$, where $X_F=X\otimes_{\mathcal{O}_F}F$. 
Passing to the generic fiber, the Cerednik-Drinflel'd theorem then implies that there are isomorphisms of $\Q_{p^2}$-rigid analytic spaces 
\begin{equation}\label{Rigid CD}
\Gamma_p\backslash{{\mathcal{H}}}_{p^2}
\simeq{{\mathcal{C}}}_{\Q_{p^2}}^\rig\end{equation} 
and an isomorphism of $\Q_{p^2}$-rigid analytic spaces which is equivariant with 
respect to the quaternionic actions on both sides: 
\begin{equation}\label{Rigid CD SFD} 
\Gamma_p\backslash{\mathcal{G}^\rig}
\simeq{\mathcal{A}}_{{\Q}_{p^2}}^\rig.\end{equation} 

\subsection{Rigid analytic modular forms} 
A rigid analytic function $f:\mathcal{H}_p(\C_p)\rightarrow\C_p$ is 
said to be a \emph{rigid analytic modular form} of weight $k$ and level $\Gamma_p$ if \[f(\gamma z)=(c z+d)^kf( z)\] for all 
$ z\in\mathcal{H}_p(\C_p)$ and 
$\gamma\in\Gamma_p$, where $\gamma( z)=(a z+b)/(c z+d)$. 
Denote $S_k^\rig(\Gamma_p)$ the $\C_p$-vector space of 
rigid analytic modular forms of weight $k$ and level $\Gamma_p$. 
See \cite[\S5.2]{Darmon-Book} for details. 

Given a $\Z[\Gamma_p]$-module $M$, 
we denote $M^{\Gamma_p}$ the submodule consisting of $\Gamma_p$-invariant 
elements of $M$. 
With notation as in \S\ref{splitting section}, 
 define $\hat{\Q}_p^\unr$-subspace 
$w_\mathcal{G}^{\Gamma_p}$ of 
$w_\mathcal{G}$ 
consisting of global sections
which are invariant for the $\Gamma_p$-action.  In particular, 
$w_\mathcal{G}^{\Gamma_p}$ is a 
$\mathcal{A}^{\Gamma_p}$-module. 
Given $f\in S_2^\rig(\Gamma_p)$, define
$\omega_f=f(z)\otimes d\tau^{\otimes{k}}$ in 
$w_{\mathcal{G},\C_p}=w_\mathcal{G}\otimes_{\hat{\Q}_p^\unr}{\C_p}$. 

\begin{lemma}\label{lemma modular forms 1}
The correspondence $f\mapsto\omega_f=f(z)\otimes d\tau^{\otimes{k}}$ sets up a $\C_p$-linear 
isomorphism between $S_k^\rig(\Gamma_p)$ and 
$w_{\mathcal{G},\C_p}^{\Gamma_p}$.
\end{lemma}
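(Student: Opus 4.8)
The plan is to reduce the statement to two facts: that $w_\mathcal{G}$ is free of rank one over $\mathcal{A}$ on the generator $d\tau$, and that $\Gamma_p$ acts on this generator through the automorphy factor $(cz+d)^{-1}$.

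First I would record the structural input. By the diagram \eqref{diagram 2} together with Proposition \ref{idempotent}, the sheaf $\underline\omega_\mathcal{G}^0=(T_0^\unr)^\vee$ is an invertible $\mathcal{O}_{\mathcal{H}_p^\unr}$-module, and the discussion in \S\ref{section differential operators} exhibits $d\tau=zdx+dy$ as a global generator. Hence $w_\mathcal{G}=H^0(\mathcal{H}_p^\unr,\underline\omega_\mathcal{G}^0)$ is a free $\mathcal{A}$-module of rank one on $d\tau$, so $w_{\mathcal{G},k}$ is free of rank one on $d\tau^{\otimes k}$. Base changing to $\C_p$ and using that $\mathcal{A}\otimes_{\hat{\Q}_p^\unr}\C_p$ is the ring of $\C_p$-valued rigid analytic functions on $\mathcal{H}_p(\C_p)$ (a base-change statement for the coherent cohomology of the quasi-Stein space $\mathcal{H}_p$, which I would cite), I get that every element of $w_{\mathcal{G},k,\C_p}=w_{\mathcal{G},k}\otimes_{\hat{\Q}_p^\unr}\C_p$ is uniquely of the form $f(z)\otimes d\tau^{\otimes k}$ with $f\colon\mathcal{H}_p(\C_p)\to\C_p$ rigid analytic. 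This already makes $f\mapsto\omega_f$ a $\C_p$-linear bijection between rigid functions and $w_{\mathcal{G},k,\C_p}$, before invariance is imposed.

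Next I would compute the automorphy factor for the $\Gamma_p$-action. The action of $\gamma=\smallmat abcd\in\GL_2(\Q_p)$ on $T_0^\unr$ is given by $\phi_0^{(\gamma)}$ in \eqref{moebius}, namely $f(z)\otimes t_0\mapsto(cz+d)f(\gamma(z))\otimes t_0$; dualizing, $\gamma$ multiplies the generator $d\tau$ of $(T_0^\unr)^\vee\simeq\underline\omega_\mathcal{G}^0$ by $(cz+d)^{-1}$. I also need the compatibility on coefficients: $\gamma$ acts on $\hat{\Z}_p^\unr$ through $\sigma^{-\val(\det\gamma)}$, and since elements of $\Gamma_p$ have $\val(\det\gamma)$ even while $\sigma$ has order two on $\Q_{p^2}\subseteq\C_p$, this part of the action is trivial over $\C_p$. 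Thus over $\C_p$ the $\Gamma_p$-action on $w_{\mathcal{G},k,\C_p}$ is purely the M\"obius action on $z$ twisted by $(cz+d)^{-k}$ on $d\tau^{\otimes k}$.

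Combining the two, for $\gamma\in\Gamma_p$ one has $\gamma^*(f(z)\otimes d\tau^{\otimes k})=f(\gamma(z))(cz+d)^{-k}\otimes d\tau^{\otimes k}$, so $\omega_f$ is $\Gamma_p$-invariant if and only if $f(\gamma(z))=(cz+d)^kf(z)$ for all $\gamma\in\Gamma_p$, which is exactly the condition $f\in S_k^\rig(\Gamma_p)$. Therefore $f\mapsto\omega_f$ restricts to the asserted $\C_p$-linear isomorphism onto $w_{\mathcal{G},k,\C_p}^{\Gamma_p}$. The main obstacle I anticipate is the automorphy-factor computation: pinning it down requires carefully tracking the dualization and the twisted $\hat{\Q}_p^\unr$-structure through Proposition \ref{identifications} and Lemma \ref{prop1}, and confirming that the Frobenius-on-coefficients contribution genuinely drops out precisely for the subgroup $\Gamma_p$. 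The base-change claim for global sections over $\mathcal{H}_p$ is routine but should be invoked explicitly.
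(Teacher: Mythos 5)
Your proof is correct and follows essentially the same route as the paper's: invariance of $\omega_f$ is read off from the automorphy formula \eqref{moebius}, and bijectivity comes from the invertibility of $\underline\omega_\mathcal{G}^0$, i.e.\ the freeness of $w_\mathcal{G}$ on the generator $d\tau$. The paper's own proof consists of exactly these two observations; your extra care with the coefficient action (the Frobenius twist $\sigma^{-\val(\det\gamma)}$ being harmless for $\Gamma_p$ because $\val(\det\gamma)$ is even --- which is precisely what the descent $\Gamma_p\backslash\mathcal{H}_p^\unr\simeq\Gamma_p\backslash\mathcal{H}_{p^2}$ cited from \cite{BC} and \cite{JL} encodes, so your phrase ``trivial over $\C_p$'' should really be ``trivial on $\Q_{p^2}$ after descent'') and with the base change of global sections to $\C_p$ fills in details the paper leaves implicit.
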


\begin{proof}
That $\omega_f$ belongs to 
$w_{\mathcal{G},{\C_p}}^{\Gamma_p}$ is because of \eqref{moebius}. The map $f\mapsto \omega_f$ 
has clearly an inverse because $\underline\omega_\mathcal{G}^0$ is an invertible 
$\mathcal{O}_{\mathcal{H}_p^\unr}$-module, and the result follows. 
\end{proof}

For any sheaf $\mathcal{F}$ on $\mathcal{H}_p^\unr$, denote $\mathcal{F}^{\Gamma_p}$ 
the sheaf on $\Gamma_p\backslash\mathcal{H}_p^\unr$ defined by 
taking $\Gamma_p$-invariant sections. 
Also, recall the sheaves $\underline{\omega}_{\hat{\Q}_p^\unr}$ and 
$\mathcal{L}_{\hat{\Q}_p^\unr}$ introduced in \eqref{def omega} and 
\eqref{def L}. 
\begin{lemma}\label{lemma modular forms 2} The isomorphisms \eqref{Rigid CD} and \eqref{Rigid CD SFD} induce isomorphisms
$(\underline{\omega}_{\mathcal{G}}^0)^{\Gamma_p}\simeq 
\underline{\omega}_{\hat{\Q}_p^\unr}^\rig$ and $(\mathcal{L}_\mathcal{G}^0)^{\Gamma_p}\simeq\mathcal{L}_{\hat{\Q}_p^\unr}^\rig$ of sheaves. 
%
%
\end{lemma}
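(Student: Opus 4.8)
The plan is to deduce both isomorphisms from the equivariant Cerednik--Drinfel'd isomorphism \eqref{Rigid CD SFD}, combined with the functoriality of relative rigid de Rham cohomology and of the Hodge bundle under pullback of families, followed by rigid analytic descent along the $\Gamma_p$-quotient; the idempotent $e$ is then applied at the very end. First I would base change \eqref{Rigid CD SFD} to $\hat{\Q}_p^\unr$ to obtain a $\Gamma_p$-equivariant isomorphism $\Gamma_p\backslash\mathcal{G}^\rig\simeq\mathcal{A}_{\hat{\Q}_p^\unr}^\rig$ of families over the base $\Gamma_p\backslash\mathcal{H}_p^\unr\simeq\mathcal{C}_{\hat{\Q}_p^\unr}^\rig$ furnished by \eqref{Rigid CD}. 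Writing $q\colon\mathcal{H}_p^\unr\to\Gamma_p\backslash\mathcal{H}_p^\unr$ for the quotient map, this uniformization realizes $\mathcal{G}^\rig$ as the fiber product $q^*\mathcal{A}_{\hat{\Q}_p^\unr}^\rig=\mathcal{A}_{\hat{\Q}_p^\unr}^\rig\times_{\mathcal{C}_{\hat{\Q}_p^\unr}^\rig}\mathcal{H}_p^\unr$.

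Next I would record that relative de Rham cohomology and the Hodge bundle commute with this pullback. Since $\mathcal{G}^\rig\simeq q^*\mathcal{A}_{\hat{\Q}_p^\unr}^\rig$ as families, base change for $\mathcal{H}^1_\mathrm{dR}$ and for $e_\mathcal{G}^*\Omega^1$ gives canonical $\Gamma_p$-equivariant identifications
\[
\mathcal{H}^{1,\rig}_\mathrm{dR}(\mathcal{G})\simeq q^*\bigl(\mathcal{H}^1_\mathrm{dR}(\mathcal{A}/\mathcal{C})^\rig\bigr),
\qquad
\underline{\omega}_{\mathcal{G}}^\rig\simeq q^*\bigl(\underline{\omega}_{\hat{\Q}_p^\unr}^\rig\bigr)
\]
of sheaves on $\mathcal{H}_p^\unr$, where I use the identification of $\mathcal{H}^{1,\rig}_\mathrm{dR}(\mathcal{G})$ with the relative rigid de Rham cohomology established in \S\ref{universal SFD subsection}. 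The equivariance here is exactly the descent datum supplied by the $\Gamma_p$-action in \eqref{Rigid CD SFD}.

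The third step is descent. Because $\Gamma_p$ acts freely and properly discontinuously on $\mathcal{H}_p^\unr$, as in the Mumford uniformization, the map $q$ is a $\Gamma_p$-Galois covering of rigid analytic spaces, and passing to $\Gamma_p$-invariant sections inverts $q^*$ on coherent sheaves equipped with a $\Gamma_p$-descent datum. Applied to the previous step this yields $(\mathcal{H}^{1,\rig}_\mathrm{dR}(\mathcal{G}))^{\Gamma_p}\simeq\mathcal{H}^1_\mathrm{dR}(\mathcal{A}/\mathcal{C})^\rig$ and $(\underline{\omega}_{\mathcal{G}}^\rig)^{\Gamma_p}\simeq\underline{\omega}_{\hat{\Q}_p^\unr}^\rig$ on $\Gamma_p\backslash\mathcal{H}_p^\unr\simeq\mathcal{C}_{\hat{\Q}_p^\unr}^\rig$. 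Finally I would apply $e$: the quaternionic action defining $e$ arises from the $\mathcal{O}_D$-action on $\mathcal{G}$, which commutes with the deck transformations $\Gamma_p\subseteq\GL_2(\Q_p)$ and is matched under \eqref{Rigid CD SFD} with the $\mathcal{R}_\mathrm{max}$-action on $\mathcal{A}$; hence $e$ commutes with taking $\Gamma_p$-invariants and with the identifications above. Applying $e$ to the two descended isomorphisms produces precisely $(\mathcal{L}_{\mathcal{G}}^0)^{\Gamma_p}\simeq\mathcal{L}_{\hat{\Q}_p^\unr}^\rig$ and $(\underline{\omega}_{\mathcal{G}}^0)^{\Gamma_p}\simeq\underline{\omega}_{\hat{\Q}_p^\unr}^\rig$.

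The hard part will be the descent step: one must verify that $q$ is a genuine $\Gamma_p$-Galois covering in the rigid analytic category and that coherent sheaves satisfy Galois descent along it, so that $\Gamma_p$-invariants really invert $q^*$. The remaining ingredients---base change for de Rham cohomology under the uniformization and the commutation of $e$ with the $\Gamma_p$-action---are formal once the equivariance in \eqref{Rigid CD SFD} and the matching of the $\mathcal{O}_D$- and $\mathcal{R}_\mathrm{max}$-actions are in hand.
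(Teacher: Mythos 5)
Your proposal is correct and takes essentially the same route as the paper: both arguments amount to descent along the quotient map $q\colon\mathcal{H}_p^\unr\rightarrow\Gamma_p\backslash\mathcal{H}_p^\unr$ (identifying sections over $\mathcal{C}_{\hat{\Q}_p^\unr}^\rig$ with $\Gamma_p$-invariant sections upstairs), combined with the equivariant Cerednik--Drinfel'd isomorphisms \eqref{Rigid CD}, \eqref{Rigid CD SFD} and the functoriality of differentials and relative de Rham cohomology, with the projector $e$ commuting throughout. The one caveat is your assertion that $\Gamma_p$ acts \emph{freely}: the paper, following Drinfel'd, only uses that each affinoid of the quotient is $\Sp(A^{\Gamma_p})$ with $\Gamma_p$ acting on $A$ through a finite group, which accommodates possible torsion in $\Gamma_p$ and still suffices (in characteristic zero) for invariants to invert $q^*$ on coherent sheaves.
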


\begin{proof} Recall that a basis of affinoid subsets of $\Gamma_p\backslash\mathcal{H}_p^\unr$ is given by $\Sp(A^{\Gamma_p})$ where 
$\Sp(A)$ ranges over the affinoid subsets of $\mathcal{H}_p^\unr$ 
such that $\Gamma_p$ acts on $A$ by a finite group (\cite[\S 6]{Drinfeld-Elliptic}).  
It follows that the structural sheaf $\mathcal{O}_{\mathcal{C}^\rig_{\hat{\Q}_p^\unr}}=\mathcal{O}_{\mathcal{C}_{\hat{\Q}_p^\unr}}^\rig$ 
of $\mathcal{C}^\rig_{\hat{\Q}_p^\unr}$ is identified with the sheaf of 
$\Gamma_p$-invariant sections of $\mathcal{H}_p^\unr$. 
The result follows from this, in light of \eqref{Rigid CD}, \eqref{Rigid CD SFD}
and the construction of differentials and de Rham cohomology. 
\end{proof}

\begin{proposition}\label{prop5.3} 
There are canonical isomorphisms of $\C_p$-vector spaces: 
\[S_2^\rig(\Gamma_p)\simeq 
w_{\mathcal{G},{\C_p}}^{\Gamma_p}=
H^0(\mathcal{H}_{p}^\unr,\underline{\omega}_{\mathcal{G}}^0)^{\Gamma_p}_{\C_p}
\simeq 
H^0(\mathcal{C}_{\hat{\Q}_{p}^\unr}^\rig,\underline{\omega}_{\hat{\Q}_{p}^\unr}^\rig)_{\C_p}.\]
\end{proposition}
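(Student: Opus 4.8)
The plan is to read the statement as the concatenation of Lemmas \ref{lemma modular forms 1} and \ref{lemma modular forms 2}, together with the general principle that taking $\Gamma_p$-invariant global sections over $\mathcal{H}_p^\unr$ computes global sections over the quotient $\Gamma_p\backslash\mathcal{H}_p^\unr\simeq\mathcal{C}^\rig_{\hat{\Q}_p^\unr}$. First, the leftmost isomorphism $S_k^\rig(\Gamma_p)\simeq w_{\mathcal{G},\C_p}^{\Gamma_p}$ is exactly Lemma \ref{lemma modular forms 1}, via the canonical correspondence $f\mapsto\omega_f=f(z)\otimes d\tau^{\otimes k}$ (tensored with $\C_p$). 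The middle equality is a matter of unwinding definitions: by definition $w_\mathcal{G}=H^0(\mathcal{H}_p^\unr,\underline\omega_\mathcal{G}^0)$, and since $\C_p$ is flat over $\hat{\Q}_p^\unr$ the formation of $\Gamma_p$-invariants commutes with the base change $-\otimes_{\hat{\Q}_p^\unr}\C_p$, so that $w_{\mathcal{G},\C_p}^{\Gamma_p}=H^0(\mathcal{H}_p^\unr,\underline\omega_\mathcal{G}^0)^{\Gamma_p}_{\C_p}$.

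For the rightmost isomorphism I would argue as follows. By Lemma \ref{lemma modular forms 2} there is a canonical isomorphism of sheaves $(\underline\omega_\mathcal{G}^0)^{\Gamma_p}\simeq\underline\omega_{\hat{\Q}_p^\unr}^\rig$ on $\Gamma_p\backslash\mathcal{H}_p^\unr$, where the two spaces are identified by the Cerednik--Drinfel'd isomorphism \eqref{Rigid CD}. Taking global sections of this sheaf isomorphism yields $H^0\bigl(\Gamma_p\backslash\mathcal{H}_p^\unr,(\underline\omega_\mathcal{G}^0)^{\Gamma_p}\bigr)\simeq H^0\bigl(\mathcal{C}^\rig_{\hat{\Q}_p^\unr},\underline\omega_{\hat{\Q}_p^\unr}^\rig\bigr)$. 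It then remains to identify the source with the $\Gamma_p$-invariant global sections $H^0(\mathcal{H}_p^\unr,\underline\omega_\mathcal{G}^0)^{\Gamma_p}$. By the very definition of the sheaf $\mathcal{F}^{\Gamma_p}$ on the quotient and the description of the affinoid basis $\Sp(A^{\Gamma_p})$ of $\Gamma_p\backslash\mathcal{H}_p^\unr$ recalled in the proof of Lemma \ref{lemma modular forms 2}, a global section of $(\underline\omega_\mathcal{G}^0)^{\Gamma_p}$ over the quotient is precisely a $\Gamma_p$-invariant global section of $\underline\omega_\mathcal{G}^0$ over $\mathcal{H}_p^\unr$. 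Base changing to $\C_p$ and composing all the identifications produces the asserted chain of canonical isomorphisms.

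The step requiring care is this last identification $H^0(\Gamma_p\backslash\mathcal{H}_p^\unr,\mathcal{F}^{\Gamma_p})\simeq H^0(\mathcal{H}_p^\unr,\mathcal{F})^{\Gamma_p}$: one must check that passing to $\Gamma_p$-invariants on the affinoid cover is compatible with gluing, so that invariant local sections genuinely descend to sections of the quotient sheaf with no descent obstruction. Since $\Gamma_p$ acts on each member $\Sp(A)$ of the relevant cover through a \emph{finite} group and the quotient affinoids $\Sp(A^{\Gamma_p})$ form a basis of $\Gamma_p\backslash\mathcal{H}_p^\unr$, this is the standard descent argument already invoked in Lemma \ref{lemma modular forms 2}. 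All maps produced are manifestly $\C_p$-linear, so the final tensoring with $\C_p$ introduces no further difficulty.
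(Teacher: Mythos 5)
Your proof is correct and takes essentially the same route as the paper: the paper's own proof of Proposition \ref{prop5.3} consists precisely of putting together Lemma \ref{lemma modular forms 1} (for the left isomorphism) and Lemma \ref{lemma modular forms 2} (for the right one). The extra step you isolate --- identifying $H^0(\Gamma_p\backslash\mathcal{H}_p^\unr,\mathcal{F}^{\Gamma_p})$ with $H^0(\mathcal{H}_p^\unr,\mathcal{F})^{\Gamma_p}$ via the affinoid basis $\Sp(A^{\Gamma_p})$ --- is exactly the content the paper delegates to the proof of Lemma \ref{lemma modular forms 2}, so your elaboration fills in the same mechanism rather than replacing it.
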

\begin{proof}
Put together Lemmas \ref{lemma modular forms 1} and \ref{lemma modular forms 2}. 
\end{proof}

\subsection{The $p$-adic Shimura-Maass operator}\label{computations} 
Taking $\Gamma_p$-invariants defines a map, for integers $k\geq 0$ and $j\geq 0$ and understanding that $\Theta^{0,\ast}_{p,k}=\Theta_{p,k}^\ast$, 
\[\Theta_{p,k}^{j,\ast}: (w_{\mathcal{G},n}^\ast)^{\Gamma_p}\longrightarrow 
(w_{\mathcal{G},n+2}^\ast)^{\Gamma_p}
\]
where recall that $\Theta_{p,k}^{j,\ast}$ was introduced in \eqref{definition Theta j}.

An alternative way to introduce $\Theta_{p,k}^{j,\ast}$ is the following. Recall the operator $\tilde\nabla_n$ in \eqref{nabla n} and, for any integer $j\geq 0$, 
define $\tilde\nabla_n^j:
\mathcal{L}_{\hat{\Q}_p^\unr,n}\rightarrow \mathcal{L}_{\hat{\Q}_p^\unr,n+2j}$ by 
the formula 
\[\tilde\nabla_n^j=\tilde\nabla_{n+2j}\circ\dots\circ\tilde\nabla_{n+2}\circ\tilde\nabla_n.
\] Considering the associated rigid analytic sheaves, and taking global sections, we obtain a map of $\mathcal{A}$-modules 
$\tilde\nabla_n^{j,\rig}:\Lambda_{\mathcal{G},n}^{\Gamma_p}\rightarrow\Lambda_{\mathcal{G},n+2j}^{\Gamma_p}$.
One may define the operator  
\[
\xymatrix{\Theta_{p,n}^{j}:
w_{\mathcal{G},n}^{\Gamma_p}\ar[r]^-{i_{\mathcal{G},n}}&
\Lambda_{\mathcal{G},n}^{\Gamma_p}\ar[r]^-{\tilde\nabla_n^{j,\rig}}&
\Lambda_{\mathcal{G},n+2j}^{\Gamma_p}\ar[r]&
\left(\Lambda_{\mathcal{G},n+2j}^\ast\right)^{\Gamma_p}\ar[r]^-{\Psi_{p,n+2j}^\ast}&
\left(w^{\ast}_{\mathcal{G},n+2j}\right)^{\Gamma_p}. 
}
\] By \eqref{horizontal}, $d\tau^\ast$ is horizontal for $\nabla_\mathcal{G}^\ast$, and therefore $\Theta_{p,n}^{j}$ coincides with the restriction of $\Theta_{p,k}^{j,\ast}$ to $w_{\mathcal{G},n}^{\Gamma_p}$.

\subsection{Comparison of Shimura-Maass operators at CM points}

Identify the set of $\Q_{p^2}$-points in the rigid space $\mathcal{H}_p$ 
with the set of $\Q_p$-algebra homomorphisms $\Hom(\Q_{p^2},\M_2(\Q_p))$ as follows: 
any $\Psi\in\Hom(\Q_{p^2},\M_2(\Q_p))$ defines an action of $\Q_{p^2}^\times$ on $\mathcal{H}_p(\Q_{p^2})=\Q_{p^2}-\Q_p$ by fractional linear transformations, and the point 
$z\in\mathcal{H}_p(\Q_{p^2})$ associated with $\Psi$ is characterised 
by the property $\Psi(a)\binom{z}{1}=a\binom{z}{1}$, for all $a\in\Q_{p^2}^\times$. 

Given a representation $\rho=(\rho_1,\rho_2):\GL_2\times\GL_2\rightarrow\GL(V)$, 
the stalk $\mathcal{E}(V)_\Psi$ of $\mathcal{E}(V)$ at a point $\Psi\in\Hom(\Q_{p^2},\M_2(\Q_p))$ can be described explicitly.  
One first observes that the structure of filtered convergent $F$-isocrystal of $\mathcal{E}(V)$ induces a structure of filtered Frobenius module (\cite[\S2]{IS})
on the fiber $\mathcal{E}(V)_\Psi$. On the other hand, one attaches to such a pair $(V,\Psi)$ a filtered Frobenius module $V_\Psi$ in a natural way as follows.  The underlying 
vector space of $V_\Psi$ is 
$V_{\Q_p^\unr}=V\otimes_{\Q_p}\Q_{p}^\unr$. The Frobenius is given by 
$\phi_V\otimes\sigma$, where $\phi_V=\rho_2\left(\smallmat 0p10\right)$ as 
before (thus, only depending on $\rho_2$). The filtration, only depending 
on $\rho_1$, has a more involved definition. 
Recall that any representation $V$ 
can be split into the direct sum of sub-representations $(V^{(n)},\rho^{(n)})$ 
which are pure of weight $n$, and therefore it is enough to define the filtration for a representation $\rho:\GL_2\rightarrow\GL(V)$ which is pure of weight $n$, since in the general case, the filtration $F^iV_{\Q_p^\unr}$ is by definition the direct sum of the filtrations $F^iV^{(n)}_{\Q_p^\unr}$ for all $n\in\Z$. 
If $V$ is pure of weight $n$, define $V_j$ to be the subspace of $V_{\Q_p^\unr}$ consisting of elements 
$v\in V$ satisfying the property $\rho(\psi(a))(v)=a^j\sigma(a)^{n-j}v$ for all 
$a\in\Q_{p^2}$. Define the filtration $F^iV_{\Q_p^\unr}$ of $V_{\Q_p^\unr}$ 
as the direct sum of $V_j$ for $j\geq i$. This equips $V_{\Q_p^\unr}$ 
with a structure 
of filtered Frobenius module, denoted $V_\Psi$. By \cite[Lemma 4.2]{IS}, 
$V_\Psi\simeq\mathcal{E}(V)_\Psi$ as filtered Frobenius modules. 

To stress the dependence on $\Psi$, 
we denote $F^\bullet V_\Psi$ the filtration of the filtered Frobenius module 
$V_\Psi$; this is then a filtration on $V_{\Q_p^\unr}$ which depends on $\Psi$. 
Let $\mathrm{gr}^i(F^\bullet V_\Psi)=F^iV_\Psi/F^{i-1}V_\Psi$ be the graded pieces of the filtration. If $V$ is pure of weight $n$, 
we have a canonical 
isomorphism $\mathrm{gr}^i(F^\bullet V_\Psi)\simeq V_i$ as well as a decomposition 
$V_{\Q_p^\unr}=\bigoplus_{i\in\Z}\mathrm{gr}^i\left(F^\bullet V_\Psi\right)$. 

For $\Psi\in\Hom(\Q_{p^2},\M_2(\Q_p))$, denote $\bar\Psi$ the 
morphism of $\Q_p$-algebras obtained by composition $\Psi$ with the main 
involution of $\M_2(\Q_p)$; therefore, if $\Psi(x)=\smallmat abcd$ then $\bar\Psi(x)=\smallmat d{-b}{-c}a$. 
If $V$ is pure of weight $n$, then the graduate pieces 
$\mathrm{gr}^i(F^\bullet V_\Psi)$ and 
$\mathrm{gr}^{n-i}(F^\bullet V_{\bar\Psi})$ are equal, for all $i\in\Z$. 
In particular, for $V=\M_2$ we have 
\begin{equation}\label{graded pieces}
\mathrm{gr}^1\left(F^\bullet (\M_2)_\Psi\right)\simeq
\mathrm{gr}^2\left(F^\bullet (\M_2)_{\bar{\Psi}}\right).
\end{equation}
and therefore there is an exact sequence:
\[
0\longrightarrow\mathrm{gr}^1\left(F^\bullet (\M_2)_\Psi\right)\longrightarrow
(\M_2)_\Psi\longrightarrow \mathrm{gr}^1\left(F^\bullet (\M_2)_{\bar{\Psi}}\right)\longrightarrow0\]and a canonical decomposition 
\begin{equation}\label{splitting}(\M_2)_{\Psi}\simeq\mathrm{gr}^1\left(F^\bullet (\M_2)_\Psi\right)\bigoplus
\mathrm{gr}^1\left(F^\bullet (\M_2)_{\bar{\Psi}}\right).\end{equation}
One can choose generators $\omega_1,\omega_2$ 
of the $\hat{\Q}_{p}^\unr$-vector space 
$\mathrm{gr}^1\left(F^\bullet (\M_2)_\Psi\right)$ so that 
$\omega_1$ and $\omega_2$ are defined over $\Q_{p^2}$. 
Then $\overline{\omega}_1$ and $\overline\omega_2$ are generators 
of the $\hat{\Q}_p^\unr$-vector space 
$\mathrm{gr}^1\left(F^\bullet (\M_2)_{\bar\Psi}\right)$, where $\omega_i\mapsto\overline\omega_i$ for $i=1,2$ 
denotes the action of $\Gal(\Q_{p^2}/\Q_p)$ on $\omega_i$. 
If therefore follows that the Hodge splitting coincides on quadratic points with the projection $(\M_2)_{\Psi}\rightarrow\mathrm{gr}^1\left(F^\bullet (\M_2)_\Psi\right)$ to the first factor in the decomposition \eqref{splitting}.

We now apply the above results to the situation of the previous sections. Recall that $K$ is a imaginary quadratic field and $f\in H^0(\mathcal{C}_\Q,\underline{\omega}_\Q^{\otimes{k}})$ is an algebraic modular form of weight $k$ and level $N^+N^-p$ with 
$p\nmid N=N^+N^-$ and $(N^+,N^-)=1$. We write $f_\infty:\mathcal{H}_p\rightarrow\C$ 
and $f_p:\mathcal{H}_p\rightarrow\C_p$ for the holomorphic and the rigid analytic 
modular forms corresponding to $f$, respectively. 
Assume $N^-p$ is a product of an 
even number of distinct primes, each of them inert in $K$, and 
that all primes dividing $N^+$ are split in $K$. 
Let $P\in\mathcal{C}_\Q(K)$ be a Heegner point, and assume that 
$P\in\mathcal{C}_{\C}(\C)$ represented by the point $\tau_\infty\in\mathcal{H}_\infty$ modulo $\Gamma_\infty$, and $P\in\mathcal{C}_{\C_p}(\C_p)$is represented by the point $\tau_p\in\mathcal{H}_p$ modulo $\Gamma_p$. 
Fix embeddings $\bar\Q\hookrightarrow\bar\Q_p$ and $\bar\Q\hookrightarrow\C_p$ which allows us to view algebraic numbers as complex and 
$p$-adic numbers.  
\begin{theorem}\label{comparison theorem} For any positive integer $j$ 
we have the equality 
\[\Theta^j_{\infty,k}(f_\infty)(\tau_\infty)=\Theta^j_{p,k}(f_p)(\tau_p).\]
\end{theorem}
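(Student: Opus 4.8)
The plan is to exploit that $\Theta_{\infty,k}^{j}$ and $\Theta_{p,k}^{j}$ are assembled from one and the same pair of \emph{algebraic} ingredients on $\mathcal{C}$ — the Gauss--Manin connection $\nabla$ and the Kodaira--Spencer isomorphism $\mathrm{KS}$ of \S\ref{algebraic de Rham} — the only genuinely transcendental datum entering each being the chosen splitting of the Hodge filtration ($\Psi_\infty$ on the complex side, $\Psi_p$ on the $p$-adic side). Write $\tilde\nabla^{j}=\tilde\nabla_{k+2(j-1)}\circ\cdots\circ\tilde\nabla_{k}\colon\mathcal{L}_{k}\to\mathcal{L}_{k+2j}$ for the iterated operator of \eqref{nabla n}, precomposed with the Hodge inclusion $\underline\omega^{\otimes k}\hookrightarrow\mathcal{L}_{k}$. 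This $\tilde\nabla^{j}$ is a morphism of algebraic sheaves defined over the number field carrying $\mathcal{C}$, $\mathcal{A}$ and the idempotent $e$. The first step is to record that $\tilde\nabla^{j}$ commutes with both analytification and rigidification: this is GAGA together with the compatibility of the algebraic, complex-analytic and rigid-analytic Gauss--Manin connections already invoked in \S\ref{algebraic de Rham} and \S\ref{universal SFD subsection} (on the $p$-adic side transported to $\mathcal{C}$ through the Cerednik--Drinfel'd identifications \eqref{Rigid CD}, \eqref{Rigid CD SFD} and Lemma \ref{lemma modular forms 2}). Hence $\tilde\nabla^{j}(f)$ is the same algebraic section in the two settings, and its value at the Heegner point $\tau$ is a single element of the fiber $(\mathcal{L}_{k+2j})_\tau$, a vector space defined over $\bar\Q$ since $A_\tau$ descends to an abelian extension of $K$.

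With this in hand, computing either $\Theta^{j}$ amounts to applying the corresponding splitting to the common class $\tilde\nabla^{j}(f)(\tau)$, so the second step is to show that $\Psi_\infty$ and $\Psi_p$ induce the same splitting of the fiber $\underline\omega_\tau\hookrightarrow(\mathcal{L}_1)_\tau\twoheadrightarrow\underline\omega_\tau^{-1}$ at $\tau$. The key is that the embedding $\varphi\colon K\hookrightarrow\mathcal{B}$ makes $K$ act on $(\mathcal{L}_1)_\tau=e\,H^1_{\mathrm{dR}}(A_\tau)$, producing a canonical eigenspace decomposition over $\bar\Q$ one of whose lines is $\underline\omega_\tau$. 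On the complex side this eigenspace decomposition is exactly the Hodge decomposition $H^{1,0}\oplus H^{0,1}$ (the CM action being by holomorphic endomorphisms), so $\Psi_\infty$ is projection onto the CM-complement; on the $p$-adic side, the filtered Frobenius module analysis preceding this theorem identifies the fiber with the $e$-part of $(\M_2)_\Psi$ and shows that the Hodge splitting agrees, on quadratic points, with the projection onto $\mathrm{gr}^1\bigl(F^\bullet(\M_2)_\Psi\bigr)$ of \eqref{splitting}, which is again the CM eigenspace projection. Thus both splittings coincide with the $\bar\Q$-rational CM splitting, and applying it to $\tilde\nabla^{j}(f)(\tau)$ yields one element of the $\bar\Q$-line $\underline\omega_\tau^{\otimes(k+2j)}$; reading this element through the fixed embeddings $\bar\Q\hookrightarrow\C$ and $\bar\Q\hookrightarrow\bar\Q_p$ gives the asserted equality.

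I expect the main obstacle to be precisely this identification: reconciling two splittings defined by genuinely different structures — complex conjugation on the Betti/de Rham realization versus the Frobenius on the crystalline/rigid realization — with a single $\bar\Q$-rational CM splitting, the algebraic cohomology $e\,H^1_{\mathrm{dR}}(A_\tau/\bar\Q)$ serving as the common bridge. A secondary technical point is that the complex iterate $\delta_{\infty,k}^{j}$ applies $\Psi_\infty$ at every stage, whereas the efficient description of $\Theta_{p,k}^{j}$ applies $\Psi_p$ once after $\tilde\nabla^{j}$; these prescriptions must be shown to agree at $\tau$. On the $p$-adic side this is exactly \eqref{Shimura-Maass explicit formula iterates}, which rests on the horizontality of $d\tau^\ast$ recorded in \eqref{horizontal}, and the archimedean analogue follows because the anti-holomorphic complement $\overline\omega$ is annihilated by the $(1,0)$-part of the real-analytic connection; hence projecting at each step and projecting once produce the same value in the fiber.
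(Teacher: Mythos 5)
Your proposal is correct and takes essentially the same route as the paper's own proof, which is exactly the Katz-style argument: both splittings $\Psi_\infty$ and $\Psi_p$ are identified at the Heegner point with the algebraic CM eigenspace splitting of $e\,H^1_{\mathrm{dR}}(A_\tau)$, and everything else in the construction (Gauss--Manin connection, Kodaira--Spencer isomorphism, and their iterates) is algebraic and hence compatible with analytification and rigidification. The only difference is one of detail: the paper states these two points in three sentences, while you additionally spell out the supporting facts (GAGA/Cerednik--Drinfel'd transport, the filtered Frobenius module description of $\Psi_p$ at quadratic points via \eqref{splitting}, and the project-at-each-step versus project-once compatibility resting on \eqref{horizontal} and its archimedean analogue) that the paper leaves implicit.
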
 

\begin{proof}
We mimic a well known argument of Katz when $p$ is split in $K$ 
(\cite[Theorems 2.4.5, 2.4.7]{KatzCM}; see also \cite[Proposition 1.12]{BDP}, \cite[Theorem 3.5]{Brooks}, \cite[Proposition 2.12]{Mori}). 
Let $A_P$ be the 
false elliptic curve corresponding to the Heegner point $P$. 
The algebraic CM splitting of $A_P$ coincides 
both with the Hodge splitting and the $p$-adic setting, and therefore 
the values of $\Psi_{\infty,n}$ and $\Psi_{p,n}$ at CM points are the same. Since the 
construction of the Shimura-Maass operators is algebraic, we see that 
$\nabla^{\mathrm{r}\text{-}\mathrm{an}}_n(f_\infty)$ coincides with $\nabla^\rig_n(f_p)$, 
and the same still holds for the iterates of the Shimura-Maass operator, which also admit an algebraic construction. The result follows. 
\end{proof}

\subsection{Nearly rigid analytic modular forms} 
In this subsection we make explicit the relation between the results of this paper 
and those of Franc's thesis \cite{Franc}; it is independent from the rest of the paper. 

We first  introduce a $\C_p$-subspace of $\mathcal{C}$, 
which plays a role analogue to that of nearly holomorphic functions in 
the real analytic setting. For this part, we closely follow \cite{Franc}.  
The assignment $X\mapsto 1/( z-z^\ast)$
defines an injective 
homomorphism $\mathcal{A}[X]\hookrightarrow\mathcal{C}$ 
(\cite[Proposition 4.3.3]{Franc}).  
Define the $\mathcal{A}$-algebra $\mathcal{N}$ of \emph{nearly rigid analytic functions} on 
to be the image of this map (\emph{cf.} \cite[Definition 4.3.5]{Franc}). 
By definition, $\mathcal{N}$ is a sub-$\mathcal{A}$-algebra 
of $\mathcal{A}^*$.  The $\mathcal{A}$-algebra $\mathcal{N}$ is equipped with 
a canonical graduation $\mathcal{N}=\bigoplus_{j\geq 0}\mathcal{N}^{(j)}$ 
where for each integer $j\geq0$, we denote $\mathcal{N}^{(j)}$ the sub-$\mathcal{A}$-algebra 
of 
$\mathcal{N}$ consisting of functions $f$ which can be written 
in the form 
\[f( z)=\sum_{i=0}^j\frac{f_i( z)}{(z-\sigma(z))^i}\] with $f_i\in\mathcal{A}$.
The Shimura-Maass operator $\delta_{p,k}$ restricts to an operator (denoted with 
the same symbol) $\delta_{p,k}:\mathcal{N}\rightarrow\mathcal{N}$ 
which takes $\mathcal{N}^{(j)}$ to $\mathcal{N}^{(j+2)}$. 

Define now $\mathcal{N}_k(\Gamma_p)=\mathcal{N}_{k}^{\Gamma_p}$ 
to be the $\C_p$-subalgebra of $\mathcal{N}$ 
consisting of functions which are invariant under the weight $k$ action of $\Gamma_p$ 
on $\mathcal{N}$, namely, those functions satisfying the transformation property 
$f(\gamma z)=(c z+d)^kf(z)$ for all 
$ z\in\hat{\Q}_p^\unr-\Q_p$ and 
$\gamma\in\Gamma_p$. Note that $S_k^\rig(\Gamma_p)\subseteq\mathcal{N}_{k}({\Gamma_p})$. We call $\mathcal{N}_{k}({\Gamma_p})$ the $\C_p$-vector space of \emph{nearly rigid analytic modular forms} of weight $k$ and level $\Gamma_p$. Define also $\mathcal{N}_{k}^{(j)}(\Gamma_p)=\mathcal{N}_{k}({\Gamma_p})\cap\mathcal{N}^{(j)}$.
The operator $\delta_{p,k}^j$ introduced in \S \ref{section differential operators} 
restricts to 
a map $\delta_{p,k}:\mathcal{N}_{k}({\Gamma_p})\rightarrow\mathcal{N}_{k+2j}({\Gamma_p})$ (\cite[Lemma 4.3.8]{Franc}). 
By \cite[Theorem 4.3.11]{Franc}, 
for each integer $r\geq 0$  
we have an isomorphism of $\C_p$-vector spaces 
\[\bigoplus_{j=0}^rS^\rig_{k+2(r-j)}(\Gamma_p)\simeq\mathcal{N}_{p,k+2r}^{(r)}(\Gamma_p)\]
which maps $(h_j)_{j=0}^{k+2(r-j)}$ to $\sum_{j=0}^{k+2(r-j)}\delta_{p,k}^j(h_j)$. 

\begin{corollary}[Franc]\label{franc result}
Let $\tau_p\in\mathcal{H}_p$ corresponds to a Heegner point. 
The values 
$\frac{\delta^j_{p,k}(f)(\tau_p)}{t_p^j}$ are algebraic for each integer $j\geq 0$.
\end{corollary}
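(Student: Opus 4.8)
The plan is to deduce the statement from three ingredients already in place: the explicit formula \eqref{Shimura-Maass explicit formula iterates} for the iterated operator, the comparison at CM points (Theorem \ref{comparison theorem}), and the classical algebraicity of complex Shimura-Maass values recalled in Section 3 (Shimura, in the form \eqref{intro rationality}).

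First I would rewrite \eqref{Shimura-Maass explicit formula iterates}, restricted to $\Gamma_p$-invariants as in \S\ref{computations}, as
\[
\Theta_{p,k}^{j}(f_p)=t_p^{-j}\,\delta_{p,k}^{j}(f_p)\otimes d\tau^{\otimes(k+2j)}.
\]
Thus $t_p^{-j}\delta_{p,k}^{j}(f)(\tau_p)$ is precisely the coordinate of the fiber value $\Theta_{p,k}^{j}(f_p)(\tau_p)$ in $(\underline\omega_{\mathcal{G}}^{0})^{\otimes(k+2j)}_{\tau_p}$ relative to the trivialization $d\tau^{\otimes(k+2j)}$. Symmetrically, the construction in Section 3 presents $\Theta_{\infty,k}^{j}(f_\infty)(\tau_\infty)$ as $\delta_{\infty,k}^{j}(f_\infty)(\tau_\infty)$ times the automorphic trivialization $\omega_{\mathbb{C}}^{\otimes(k+2j)}$ (the factor $2\pi i$ being already absorbed into $\delta_{\infty,k}$).

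Next I would apply Theorem \ref{comparison theorem} to view $\Theta_{p,k}^{j}(f_p)(\tau_p)$ and $\Theta_{\infty,k}^{j}(f_\infty)(\tau_\infty)$ as one and the same class in the algebraic fiber $(\underline\omega^{\otimes(k+2j)})_{P}$ attached to the Heegner point $P$; this is legitimate because both $\Psi_\infty$ and the rigid splitting $\Psi_p$ of Theorem \ref{splitting theorem} agree with the algebraic CM splitting of $A_P$. Fix an algebraic generator $\omega_P$ of $\underline\omega_P$, namely the CM differential defined over the Hilbert class field. Since $\omega_{\mathbb{C}}=\Omega_\infty^{-1}\omega_P$, Shimura's theorem \eqref{intro rationality} (recalled via Section 3) shows that the coordinate of this class relative to $\omega_P^{\otimes(k+2j)}$ equals $\delta_{\infty,k}^{j}(f_\infty)(\tau_\infty)/\Omega_\infty^{k+2j}$ and hence lies in $\bar{\mathbb{Q}}$; call this algebraic number $a_j$.

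Comparing the two trivializations in the single fiber $(\underline\omega^{\otimes(k+2j)})_P$ and writing $\lambda_p=d\tau|_{\tau_p}/\omega_P$, the two coordinate computations give
\[
\frac{\delta_{p,k}^{j}(f)(\tau_p)}{t_p^{j}}=a_j\,\lambda_p^{-(k+2j)}.
\]
The corollary therefore reduces to the algebraicity of $\lambda_p$, which I expect to be the main obstacle. I would resolve it using that the Heegner point is algebraic, $\tau_p\in K\subseteq\bar{\mathbb{Q}}$, together with the explicit shape $d\tau=\tau_p\,dx+dy$ at this point: by Proposition \ref{prop1} and Proposition \ref{identifications} the classes $dx,dy$ come from the $\mathbb{Q}_p$-rational universal data $\eta^{\univ}$, and at a CM point this Frobenius-rational crystalline structure is compatible with the $\bar{\mathbb{Q}}$-rational algebraic de Rham structure of $A_P$; hence $d\tau|_{\tau_p}$ is an algebraic multiple of $\omega_P$, so $\lambda_p\in\bar{\mathbb{Q}}^{\times}$. (For $j=0$ this is exactly the assertion that the rigid value $f_p(\tau_p)$ of an algebraic modular form at a Heegner point is algebraic, which is the phenomenon that forces the power of $t_p$ to be $j$ rather than $k+2j$.) Granting this compatibility, $\lambda_p^{-(k+2j)}\in\bar{\mathbb{Q}}$, the displayed quantity is algebraic, and the argument is complete.
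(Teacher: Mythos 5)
Your overall route is the same as the paper's: the paper also deduces the corollary from the explicit formula \eqref{Shimura-Maass explicit formula iterates}, Theorem \ref{comparison theorem}, and the classical complex algebraicity \eqref{intro rationality}, and your coordinate bookkeeping
\[
\frac{\delta_{p,k}^{j}(f)(\tau_p)}{t_p^{j}}=a_j\,\lambda_p^{-(k+2j)},\qquad a_j\in\bar{\Q},\ \ \lambda_p=d\tau|_{\tau_p}/\omega_P,
\]
is a correct and honest unpacking of what that deduction actually requires. The problem is your final step, and it is a genuine gap. Lemma \ref{prop1} and Proposition \ref{identifications} cannot give $\lambda_p\in\bar{\Q}$: they identify the crystalline $\Q_p$-structure $V_\mathrm{cris}$ with Teitelbaum's universal rigid data, i.e.\ they compare two structures living entirely on the $p$-adic side, and the field $\bar{\Q}$ --- equivalently the algebraic de Rham structure $eH^1_\mathrm{dR}(A_P/\bar{\Q})$ of the CM fibre --- never enters them. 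The ``compatibility'' you invoke, namely that the Frobenius-rational structure spanned by $dx,dy$ is $\bar{\Q}$-commensurable with the algebraic de Rham structure at a CM point, is precisely the statement to be proved: as you yourself remark, it is equivalent to the $j=0$ case of the corollary, so the argument is circular exactly where all the arithmetic content sits. That this point cannot be waved through on general ``rational structures are compatible'' grounds is shown by the complex analogue: there the corresponding ratio between the analytic trivialization $2\pi i\,dz$ and $\omega_P$ is the CM period $\Omega_\infty$, which is transcendental, and nothing formal distinguishes the $p$-adic situation from that one --- the distinction is a theorem (Franc's), not a formality.

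For comparison, the paper never factors out $\lambda_p$. Its (very terse) proof reads Theorem \ref{comparison theorem} as directly identifying the $p$-adic value, in its canonical $d\tau$-trivialization, with the complex value evaluated on algebraic test triples, where Shimura--Brooks algebraicity applies; the mechanism meant to legitimize this is inside the proof of that theorem, namely the assertion that the splitting of Theorem \ref{splitting theorem}, the splitting $\Psi_\infty$, the Gauss--Manin connection and the Kodaira--Spencer map all coincide over the CM fibre with algebraically defined constructions. In other words, the algebraicity you isolate as $\lambda_p\in\bar{\Q}$ is exactly the load that Theorem \ref{comparison theorem} is being asked to carry in the paper. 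Your reduction has the merit of making that hidden load explicit, but since you neither prove it nor can derive it from the propositions you cite, the proposal as written is not a complete proof.
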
 

\begin{proof}
The result is clear from 
Theorem \ref{comparison theorem} since this is known for  
$\Theta_{\infty,k}^j(f)(\tau_\infty)$. 
\end{proof}

\begin{remark} Equation \eqref{Shimura-Maass explicit formula iterates}
answers affirmatively one of the questions 
left in \cite[\S 6.1]{Franc} whether if it was possible to describe the $p$-adic Shimura-Maass operator $\delta^j_{p,k}$ introduced in \cite{Franc} in 
a more conceptual way, similar to that in the complex case. 
Corollary \ref{franc result} is the main result of \cite{Franc}, which was obtained 
via a completely different method, following more closely 
the complex analytic approach of Shimura.
\end{remark}

\section{The Coleman primitive} \label{section Coleman primitive}
Write
$\nabla=\nabla_{\mathcal{G},n}^\ast$, $\nabla^{1,0}=\nabla^{1,0}_\mathcal{G}$, 
$\nabla^{0,1}=\nabla^{0,1}_\mathcal{G}$ and 
$\langle,\rangle=\langle,\rangle_{\mathcal{G},n}^\ast$ to simplify the notation. 
For any $n$ and any $j$, whenever there is not possible confusion,  
we write  
$\Theta_p=\Theta_{p,n}^\ast$ and   
$\Theta_{p,n}^{\ast,j}=\Theta_p^j$ for the $p$-adic Shimura-Maass operator, and 
$\Psi_p=\Psi_{p,n}^\ast$ for the splitting of the Hodge filtration.  

We set up the notation $\omega_\can=d\tau$ and 
$\eta_\can=\frac{d\tau^\ast}{z^\ast-{z}}$. 
Since $\langle dx,dy\rangle=-1$, we have 
$\langle\omega_\can,\eta_\can\rangle=1$.  
We also write $\omega_\can^j\eta_\can^{n-j}=\omega_\can^{\otimes{j}}\otimes\eta_\can^{\otimes{n-j}}$.

The computation of the Gauss-Manin connection gives
\[\nabla(\omega_\mathrm{can})=\left(-\frac{\omega_\can}{{z}^\ast-z}+\eta_\mathrm{can}\right)\otimes dz,\]
\[\nabla(\eta_\mathrm{can})=-\frac{\omega_\can \otimes dz^\ast}{(z^\ast-z)^2}+\frac{\eta_\can\otimes dz}{z^\ast-z}.\] 

Let $f:\mathcal{H}_p\rightarrow\C_p$ be a rigid modular form 
giving rise to a 
section $\omega_f=f(z)\otimes d\tau^{\otimes{k}}.$ Put $n=k-2$. 
Using the Kodaira-Spencer map, we identify this with 
$\omega_f=f(z)dz\otimes d\tau^{\otimes n}.$
Let $F_f$ be the Coleman primitive of the differential form $\omega_f$, satisfying the differential equation 
\[\nabla(F_f)=\omega_f.\] 
Define for $j=n/2,\dots, n$ an integer 
\begin{equation}\label{def G_j}
G_j(z)=\langle F_{f}(z),\omega_\mathrm{can}^j\eta_\mathrm{can}^{n-j}\rangle\otimes\omega_\can^{n-2j}.\end{equation}

\begin{theorem}\label{lemma8.1}
$\Theta^{j+1}_p(G_j)=j!\omega_f$. 
\end{theorem}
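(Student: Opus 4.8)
The plan is to reduce the identity to a first-order recursion among the scalar functions cut out by pairing the Coleman primitive against the standard frame, and then to solve that recursion.

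First I would observe that $G_j$ has exactly the shape to which the explicit formula for the $p$-adic Shimura--Maass operator applies. Writing $H_j=\langle F_f,\omega_\can^j\eta_\can^{n-j}\rangle\in\mathcal{A}^\ast$ for the scalar factor, we have $G_j=H_j\otimes\omega_\can^{\otimes(n-2j)}$, a section of $w_{\mathcal{G},n-2j}^\ast$, so that \eqref{Shimura-Maass explicit formula iterates} gives
\[\Theta_p^{j+1}(G_j)=\left(\tfrac{1}{t_p}\right)^{j+1}\delta_{p,n-2j}^{\,j+1}(H_j)\otimes d\tau^{\otimes(n+2)}.\]
Thus the whole statement is reduced to computing the iterated Shimura--Maass derivative of the scalar $H_j$, i.e. to understanding how the operators $\delta_{p,\cdot}$ act on the family $\{H_j\}$.

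The heart of the argument is the recursion $\delta_{p,n-2j}(H_j)=j\,H_{j-1}$ for $1\le j\le n$. To prove it I would combine three inputs: the Coleman relation $\nabla(F_f)=\omega_f$, whose $dz$-component reads $\nabla^{1,0}(F_f)=f\,\omega_\can^{\otimes n}$; the explicit Gauss--Manin formulas for $\nabla(\omega_\can)$ and $\nabla(\eta_\can)$ recalled above; and the flatness of the polarization pairing, $\tfrac{\partial}{\partial z}\langle s,t\rangle=\langle\nabla^{1,0}s,t\rangle+\langle s,\nabla^{1,0}t\rangle$. A Leibniz computation from the Gauss--Manin formulas yields
\[\nabla^{1,0}\!\left(\omega_\can^j\eta_\can^{n-j}\right)=\frac{n-2j}{z^\ast-z}\,\omega_\can^j\eta_\can^{n-j}+j\,\omega_\can^{j-1}\eta_\can^{n-j+1},\]
and the crucial point is that in $\Sym^n$ the induced pairing links only complementary bidegrees, so that $\langle\omega_\can^{\otimes n},\omega_\can^j\eta_\can^{n-j}\rangle=0$ for all $j\ge1$. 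Hence the inhomogeneous $f$-term drops out of $\tfrac{\partial}{\partial z}H_j$, leaving $\tfrac{\partial}{\partial z}H_j=\tfrac{n-2j}{z^\ast-z}H_j+j\,H_{j-1}$; adding $\tfrac{n-2j}{z-z^\ast}H_j$ produces exactly $\delta_{p,n-2j}(H_j)=j\,H_{j-1}$.

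Finally I would iterate. Chaining the recursion down from $j$ to $0$ accumulates the factor $j(j-1)\cdots 1=j!$, while at the bottom step the $f$-term no longer vanishes: since $\langle\omega_\can^{\otimes n},\eta_\can^{\otimes n}\rangle\neq0$, the same pairing computation gives $\delta_{p,n}(H_0)=\langle\omega_\can^{\otimes n},\eta_\can^{\otimes n}\rangle\,f$. Putting these together, $\delta_{p,n-2j}^{\,j+1}(H_j)=j!\,\langle\omega_\can^{\otimes n},\eta_\can^{\otimes n}\rangle\,f$, and substituting into the displayed expression for $\Theta_p^{j+1}(G_j)$ recovers $j!\,\omega_f$ once the normalizing constants are matched---namely the period $t_p$ carried by each application of the operator and the value of the Poincar\'e pairing on $\Sym^n$ fixed by $\langle\omega_\can,\eta_\can\rangle=1$. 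The main obstacle is precisely this middle step: getting the combinatorics of the $\Sym^n$ pairing right, so that the $f$-term vanishes for $j\ge1$ but survives at $j=0$, together with a careful bookkeeping of the powers of $t_p$ and of the pairing normalization in the base case.
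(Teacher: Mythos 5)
Your proof is correct and follows essentially the same route as the paper: the key recursion $\delta_{p,n-2j}(H_j)=j\,H_{j-1}$ (equivalently $\Theta_p(G_j)=jG_{j-1}$), the vanishing $\langle\omega_\can^{\otimes n},\omega_\can^{j}\eta_\can^{n-j}\rangle=0$ for $j\geq 1$ together with the surviving $f$-term at $j=0$, and the iteration accumulating the factor $j!$ are exactly the paper's computation, merely repackaged at the level of the scalar functions $H_j$ via the explicit formula \eqref{Shimura-Maass explicit formula} instead of being carried out on the tensors $G_j$ with $\Psi$ and Kodaira--Spencer applied at the end. Your explicit flagging of the $t_p$-normalization is, if anything, slightly more careful bookkeeping than the paper's proof, which silently identifies $dz$ with $\omega_\can^{\otimes 2}$ under the Kodaira--Spencer map throughout.
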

\begin{proof}  This result, which is proved by means of a simple and 
explicit computation, is the analogue of 
\cite[Proposition 3.24]{BDP} (and also of \cite[Theorem 7.3]{Brooks}), but we provide a complete proof since our formalism is quite different from that in \cite{BDP}, where one can use the Tate curve and the $q$-expansion principle. 
As in \emph{loc. cit.} we show that $\Theta_pG_0(z))=\omega_f$ 
and $\Theta_p(G_j(z))=jG_{j-1}(z)$.

We first compute $\nabla(G_0(z))$. 
We have: 
\[\begin{split} 
\nabla(G_0(z))&=
\nabla\left(\langle F_{f}(z),\eta_\mathrm{can}^{n}
\rangle\otimes\omega_\can^n\right)
\\
&=
\left\langle \nabla(F_{f}(z)),\eta_\mathrm{can}^{n}\right\rangle\otimes \omega_\can^n+
\langle F_{f}(z),\nabla\left(\eta_\mathrm{can}^{n}\right)\rangle\otimes \omega_\can^n +
\left\langle F_f(z),\eta_\mathrm{can}^{n}\right\rangle\otimes\nabla(\omega_\can^n)
\\
&=\left\langle f(z)dz\otimes\omega_\can^n,\eta_\mathrm{can}^{n}\right\rangle\otimes\omega_\can^n
+\langle F_{f}(z),\nabla\left(\eta_\mathrm{can}^{n}\right)\rangle\otimes \omega_\can^n +
\left\langle F_f(z),\eta_\mathrm{can}^{n}\right\rangle\otimes\nabla(\omega_\can^n).
\end{split}\]
We now compute the last two pieces: 
\[\begin{split}
\langle F_{f}(z),\nabla\left(\eta_\mathrm{can}^{n}\right)\rangle\otimes\omega_\can^n&=
\left\langle F_{f}(z),n\eta^{n-1}_\can\nabla(\eta_\can)\right\rangle\otimes\omega_\can^n\\
&=\left\langle F_{f}(z),n\eta^{n-1}_\can\left(\frac{-\omega_\can \otimes dz^\ast}{({z}^\ast-z)^2}+\frac{\eta_\can\otimes dz}{{z}^\ast-z}\right)\right\rangle\otimes\omega_\can^n
\\
&=-\left\langle F_{f}(z),\frac{n\eta^{n-1}_\can\omega_\can \otimes dz^\ast}{({z}^\ast-z)^2}\right\rangle\otimes\omega_\can^n
+\left\langle F_{f}(z),\frac{n\eta^{n}_\can\otimes dz}{{z}^\ast-z}\right\rangle\otimes\omega_\can^n\\
&=-\left\langle F_{f}(z),{\eta^{n-1}_\can\omega_\can }\right\rangle\otimes \frac{n \omega_\can^n\otimes dz^\ast}{({z}^\ast-z)^2}
+\left\langle F_{f}(z),{\eta^{n}_\can}\right\rangle\otimes \frac{n \omega_\can^n\otimes dz}{{z}^\ast-z}
\end{split}\]
and
\[\begin{split}\left\langle F_f(z),\eta_\mathrm{can}^{n}\right\rangle\otimes\nabla(\omega_\can^n)&=\left\langle F_f(z),\eta_\mathrm{can}^{n}\right\rangle\otimes\nabla(\omega_\can^n)\\
&=\left\langle F_f(z),\eta_\mathrm{can}^{n}\right\rangle\otimes
n\omega_\can^{n-1}\nabla(\omega_\can)\\
&=\left\langle F_f(z),\eta_\mathrm{can}^{n}\right\rangle\otimes
n\omega_\can^{n-1}\left(-\frac{\omega_\can}{z^\ast-z}+\eta_\mathrm{can}\right)\otimes dz 
\\
&=
-\left\langle F_f(z),\eta_\mathrm{can}^{n}\right\rangle\otimes
\frac{n\omega_\can^{n}\otimes dz }{z^\ast-z}+
\left\langle F_f(z),\eta_\mathrm{can}^{n}\right\rangle\otimes
n\omega_\can^{n-1}\eta_\mathrm{can}\otimes dz . 
\end{split}\]
Therefore the sum of these two pieces gives: 
\[-\left\langle F_{f}(z),{\eta^{n-1}_\can\omega_\can }\right\rangle\otimes \frac{n \omega_\can^n\otimes dz^\ast}{(z-z^\ast)^2}+
\left\langle F_f(z),\eta_\mathrm{can}^{n}\right\rangle\otimes
n\omega_\can^{n-1}\eta_\mathrm{can}\otimes dz.\]
Recall now that $\Psi(\eta_\can)=0$ and $\Psi(dz^\ast)=0$. 
Therefore, using the Kodaira-Spencer map to replace $dz$ with $\omega_\can^2$, 
and applying $\Psi$  
we have
\[\Theta_p(G_0(z))=\omega_f\langle\omega_\can^n,\eta_\can^n\rangle=\omega_f.\]

We now compute  $\nabla(G_j(z))$ for $j\geq 1$.
The Gauss-Manin connection 
\[
\nabla(G_j(z))=
\nabla\left(\langle F_{f}(z),\omega_\mathrm{can}^j\eta_\mathrm{can}^{n-j}\rangle\otimes\omega_\can^{n-2j}\right) \]
is the sum of three terms \begin{equation}\label{Gj1}
\langle \nabla(F_{f}(z)),\omega_\mathrm{can}^j\eta_\mathrm{can}^{n-j}\rangle\otimes\omega_\can^{n-2j}+
\langle F_{f}(z),\nabla(\omega_\mathrm{can}^j\eta_\mathrm{can}^{n-j})\rangle\otimes\omega_\can^{n-2j} +
\langle F_{f}(z),\omega_\mathrm{can}^j\eta_\mathrm{can}^{n-j}\rangle\otimes\nabla(\omega_\can^{n-2j})
\end{equation}
which we calculate separately as before. 
First, since $j>0$, we have 
\[\langle \nabla(F_{f}(z)),\omega_\mathrm{can}^j\eta_\mathrm{can}^{n-j}\rangle\otimes\omega_\can^{n-2j}=\langle f(z)dz\otimes\omega_\can^n,\omega_\mathrm{can}^j\eta_\mathrm{can}^{n-j}\rangle=0.\]
Next, a simple computation shows that 
\[\nabla(\omega_\mathrm{can}^j\eta_\mathrm{can}^{n-j})=(n-2j)\omega_\can^{j}\eta_\mathrm{can}^{n-j}\otimes \frac{dz}{{{z}^\ast-z}}+j\omega_\can^{j-1}\eta_\mathrm{can}^{n-j+1}\otimes dz
-(n-j)\omega_\mathrm{can}^{j+1}\eta_\mathrm{can}^{n-j-1}
\otimes \frac{dz^\ast}{({z}^\ast-z)^2}\]
and therefore the second summand in \eqref{Gj1} is 
\[\begin{split}\langle F_{f}(z),\nabla(\omega_\mathrm{can}^j\eta_\mathrm{can}^{n-j})\rangle\otimes\omega_\can^{n-2j}=&
(n-2j)\left\langle F_{f}(z),\omega_\can^{j}\eta_\mathrm{can}^{n-j}
\right\rangle\otimes\omega_\can^{n-2j}\otimes \frac{dz}{{z-{z}^\ast}}-\\
&j\left\langle F_{f}(z),\omega_\can^{j-1}\eta_\mathrm{can}^{n-j+1}\right\rangle\otimes\omega_\can^{n-2j}\otimes dz+\\
&-(n-j)\left\langle F_{f}(z),\omega_\mathrm{can}^{j+1}\eta_\mathrm{can}^{n-j-1}
\right\rangle\otimes\omega_\can^{n-2j} \otimes\frac{dz^\ast}{(z-z^\ast)^2}\end{split}
\]
Finally, the third summand is 
\[\begin{split}
\langle F_{f}(z),\omega_\mathrm{can}^j\eta_\mathrm{can}^{n-j}\rangle\otimes\nabla(\omega_\can^{n-2j})=&
(n-2j)\langle F_{f}(z),\omega_\mathrm{can}^j\eta_\mathrm{can}^{n-j}\rangle\otimes\omega_\can^{n-2j-1}\nabla(\omega_\can)\\
=&(n-2j)\langle F_{f}(z),\omega_\mathrm{can}^j\eta_\mathrm{can}^{n-j}\rangle\otimes\omega_\can^{n-2j-1}\left(-\frac{\omega_\can}{z-{z}^\ast}+\eta_\mathrm{can}\right)\otimes dz\\
=&-(n-2j)\langle F_{f}(z),\omega_\mathrm{can}^j\eta_\mathrm{can}^{n-j}\rangle\otimes\omega_\can^{n-2j}\otimes \frac{dz}{{z-{z}^\ast}}+\\
&+(n-2j)\langle F_{f}(z),\omega_\mathrm{can}^j\eta_\mathrm{can}^{n-j}\rangle\otimes\omega_\can^{n-2j-1}\eta_\can\otimes dz.
\end{split}\]
Summing up the pieces in \eqref{Gj1},  
using the Kodaira-Spencer map to replace $dz$ with $\omega_\can^2$, 
and applying the splitting of the Hodge filtration $\Psi$ which kills $\eta_\can$ and $dz^\ast$, 
we have 
\[\Theta_p(G_j(z))=j\langle F_{f}(z),\omega_\can^{j-1}\eta_\mathrm{can}^{n-(j-1)}\rangle\otimes\omega_\can^{n-2(j-1)}=jG_{j-1}(z).\]
The result follows.\end{proof}

\section{The generalised Kuga-Sato motive}\label{section: motive}
Fix an even integer $k\geq 2$ 
and put $n=k-2$, $m=n/2$. 
Let $A_0$ be a false elliptic curve with quaternionic multiplication and full level-$M$ structure, defined over $H$ (the Hilbert class field of $K$) and with complex multiplication by $\mathcal{O}_K$; the action of $\mathcal{O}_K$ is required to commute with the quaternionic action, and this implies that $A_0$ is isogenous to $E\times E$ for an elliptic curve $E$ with CM by $\mathcal{O}_K$.  Fix a field $F\supset H$ and consider the $(2n+1)$-dimensional variety $X_m$ over $F$ given by
\[X_m:=\mathcal{A}^m\times A_0^m.\]Here and in the following we simplify the notation and simply write $\mathcal{A}$, $\mathcal{C}$  and $A_0$ for $\mathcal{A}_F$, $\mathcal{C}_F$ and $(A_0)_F$, unless we need to stress the field of definition in which case we keep the full notation. 
The variety $X_m$ is equipped with a proper morphism $\pi\colon X_m\rightarrow \mathcal{C}$ with $2n$-dimensional fibers. The fibers above points of $\mathcal{C}$ are products of the form $A^m\times A^m_0$.

The de Rham cohomology of $\mathcal{C}$ attached to $\mathcal{L}_n$, denoted $H^1_\mathrm{dR}(\mathcal{C},\mathcal{L}_n,\nabla)$, is defined to be the $1$-st hypercohomology of the complex
$$0\longrightarrow\mathcal{L}_n\overset{\nabla}{\longrightarrow}\mathcal{L}_n\otimes \Omega^1_{\mathcal{C}}\longrightarrow 0.$$
As shown in \cite[Corollary 6.3]{Brooks}, one can define a projector 
$\epsilon_{\mathcal{A}}$ (denoted $P$ in \emph{loc. cit.}) 
in the ring of correspondences $\mathrm{Corr}_{\mathcal{C}}(\mathcal{A}^m,\mathcal{A}^m)$, such that 
\begin{equation}\label{filtration A bis}\epsilon_\mathcal{A}H^*_\mathrm{dR}(\mathcal{A}_m/F)\subseteq H^{n+1}_\mathrm{dR}(\mathcal{A}_m/F),\end{equation}
\begin{equation}\label{filtration A}\epsilon_\mathcal{A}H^*_\mathrm{dR}(\mathcal{A}_m/F)\simeq H^1_\mathrm{dR}(\mathcal{C},\mathcal{L}_n,\nabla).\end{equation} 
On the other hand, by \cite[Proposition 6.4]{Brooks}, we can define a projector $\epsilon_A\in \mathrm{Corr}(A^m_0,A^m_0)$
(which is defined by means of $\epsilon_\mathcal{A}$)
such that 
\begin{equation}\label{filtration A_0}
\epsilon_{A_0} H_\mathrm{dR}^*(A_0^m/F)=\mathrm{Sym}^neH_\mathrm{dR}^1(A_0/F).\end{equation}
The projectors $\epsilon_\mathcal{A}$ and $\epsilon_A$ are commuting idempotents when viewed in the ring $\mathrm{Corr}_\mathcal{C}(X_m,X_m)$. We define $\epsilon=\epsilon_{\mathcal{A}}\epsilon_{A_0}$ and denote $\mathcal{D}$ the motive 
$(X_m,\epsilon)$. 
By \cite[Proposition 6.5]{Brooks} and \eqref{filtration A bis}, \eqref{filtration A}, \eqref{filtration A_0} we see that
\begin{equation}\label{de Rham realization}
\epsilon H^i_\mathrm{dR}(X_m/F)=\begin{cases}H^1_\mathrm{dR}(\mathcal{C},\mathcal{L}_n,\nabla)\otimes \mathrm{Sym}^neH^1_\mathrm{dR}(A_0^m/F), \text{if $i=2n+1$},\\
0, \text{if $i\neq2n+1$}.
\end{cases}\end{equation}
Thus the Hodge filtration of the de Rham cohomology is given by 
\begin{equation}\label{equation Hodge filtration}
F^{n+1}\left(\epsilon H^{2n+1}_\mathrm{dR}(X_m/F)\right)=
F^{n+1}\left(H^1_\mathrm{dR}(\mathcal{C},\mathcal{L}_n,\nabla)\right)\otimes \mathrm{Sym}^neH^1_\mathrm{dR}(A_0^m/F).
\end{equation}
Finally we have a map (\cite[page 4221]{Brooks}) 
\begin{equation}\label{modform} 
H^0\left(\mathcal{C},\underline\omega^{\otimes{n+2}}\right)\longrightarrow F^{n+1}\left(H^1_\mathrm{dR}(\mathcal{C},\mathcal{L}_n,\nabla)\right).
\end{equation}

\section{The Abel-Jacobi map}

Let $[\Delta]$ be the class a null-homologous codimension-$(n+1)$ cycle $\Delta$ in $\CH^{n+1}(\mathcal{D})(F)$, where $F$ is as in Section \ref{section: generalized Heegner cycles} a field containing the Hilbert class field of $K$. One may associate to $[\Delta]$ the isomorphism class of the extension 
$$0\longrightarrow \epsilon H^{2n+1}_{\text{\'et}}\left(\overline{X}_m,\Q_p(n+1)\right)\longrightarrow E\longrightarrow \Q_p\longrightarrow 0$$
in
$$\mathrm{Ext}_{G_F}^1\left(\Q_p,\epsilon H^{2n+1}_{\text{\'et}}(\overline{X}_m,\Q_p(n+1))\right)$$
(where $\mathrm{Ext}_{G_F}^1$ denotes the first $\mathrm{Ext}$ group 
in the category of $G_F=\Gal(\bar{F}/F)$-modules) 
given by the pull-back of
\begin{multline*}
0\longrightarrow \epsilon H^{2n+1}_{\text{\'et}}\left(\overline{X}_m,\Q_p(n+1)\right)\longrightarrow \epsilon H^{2n+1}_{\text{\'et}}\left(\overline{X}_m-|\overline{\Delta}|,\Q_p(n+1)\right)\longrightarrow\\
\longrightarrow\mathrm{Ker}\left(\epsilon H^{2n+2}_{{\text{\'et}}|\overline{\Delta}|}\left(\overline{X}_m,\Q_p(n+1)\right)\rightarrow \epsilon H^{2n+2}_{\text{\'et}}\left(\overline{X}_m,\Q_p(n+1)\right)\right)\longrightarrow 0
\end{multline*}
via the map $\Q_p\rightarrow\epsilon H^{2n+2}_{{\text{\'et}}|\overline{\Delta}|}(\overline{X}_m,\Q_p(n+1))$ sending $1$ to the cycle class $\mathrm{cl}^{\overline{X}_m}_{|\overline{\Delta}|}(\overline{\Delta})$ of $\Delta$ in $H^{2n+2}_{|\overline{\Delta}|}(\overline{X}_m,\Q_p(n+1))$. This association defines a map,  
called \emph{$p$-adic \'etale Abel-Jacobi map}
$$\AJ_p\colon \CH^{n+1}(\mathcal{D})(F)\longrightarrow \mathrm{Ext}_{G_F}^1\left(\Q_p,\epsilon H^{2n+1}_{\text{\'et}}(\overline{X}_m,\Q_p(n+1))\right).$$

Let $v$ be the place of $F$ above $p$ induced by the inclusion $F\subseteq\overline{\Q}\hookrightarrow\C_p$, which for simplicity we assume to be unramified over $p$. We now describe the restriction of $\AJ_p$ to $\mathrm{CH}^{n+1}(\mathcal{D})(F_v)$. Consider the base change of $X_m$ and $\mathcal{C}$ to $F_v$ that we still denote by $X_m$ and $\mathcal{C}$ in this section. 
Since the motive $X_m$ has semistable reduction at $v$,  
the image of the Abel-Jacobi map is contained in the first $\mathrm{Ext}$ group in the category of semistable representations; using \cite[Lemma 2.1]{IS}, and following the argument in \cite[page 362]{IS} (see also \cite[\S 4.2]{LP2}) the Abel-Jacobi map gives a map, denoted with the same symbol by a slight abuse of notation, 
\[\AJ_p\colon \CH^{n+1}(\mathcal{D})(F_v)\longrightarrow
\frac{D_{\mathrm{st},F_v}\left(\epsilon H^{2n+1}_{\text{\'et}}(\overline{X}_m,\Q_p(n+1))\right)}
{F^{n+1}\left(D_{\mathrm{st},F_v}(\epsilon H^{2n+1}_{\text{\'et}}(\overline{X}_m,\Q_p(n+1)))\right)}
\] where 
$D_{\mathrm{st},F_v}$ is the Fontaine's semistable functor from the category of $G_{F_v}=\Gal(\bar{F}_v/F_v)$-representations to the category $M^{\phi,N}_{F_v}$ of filtered Frobenius monodromy modules over $F_v$, and we denote as usual by 
$F^{i}\left(D\right)$ the $i$-step filtration of  a filtered Frobenius monodromy module $D$. 

 By \cite{Tsuji1}, \cite{FaltingsAEE} (see also \cite{Tsuji2}) we know that  
$D_{\mathrm{st},F_v}\left(H^{2n+1}_\text{\'et}(X_m,\Q_p)\right)$ is isomorphic to the de Rham cohomology group  
$H^{2n+1}_\mathrm{dR}(X_m/F_v)$ as filtered Frobenius monodromy modules. Therefore, applying the idempotent
$\epsilon$, we obtain the isomorphism 
\begin{equation}\label{equation39}
\frac{D_{\mathrm{st},F_v}\left(\epsilon H^{2n+1}_{\text{\'et}}(\overline{X}_m,\Q_p(n+1))\right)}
{F^{n+1}\left(D_{\mathrm{st},F_v}(\epsilon H^{2n+1}_{\text{\'et}}(\overline{X}_m,\Q_p(n+1)))\right)}
\simeq \frac{\epsilon H^{2n+1}_\mathrm{dR}(X_m/F_v)(n+1)}{F^{n+1}\left(\epsilon H^{2n+1}_\mathrm{dR}(X_m/F_v)(n+1)\right)}.\end{equation}


By Poincar\'e duality, 
\[\frac{\epsilon H^{2n+1}_\mathrm{dR}(X_m/F_v)(n+1)}{F^{n+1}\left(\epsilon H^{2n+1}_\mathrm{dR}(X_m/F_v)(n+1)\right)}
\simeq
({F^{n+1}\left(\epsilon H^{2n+1}_\mathrm{dR}(X_m/F_v)(n+1)\right)})^\vee\]
where $V^\vee$ denotes dual of $F_v$-vector spaces. 
Combining 
\eqref{equation Hodge filtration} and dualizing 
\eqref{modform} we obtain a map 
\[({F^{n+1}\left(\epsilon H^{2n+1}_\mathrm{dR}(X_m/F_v)(n+1)\right)})^\vee \longrightarrow 
\left(M_k(\mathcal{C},F_v)\otimes \mathrm{Sym}^neH^1_\mathrm{dR}(A_0^m/F)\right)^\vee.\]
%
%
%
%
The $p$-adic Abel-Jacobi map for the nullhomologous $(n+1)$-th Chow cycles of the motive $\mathcal{D}$ can thus be viewed as a map, denoted with the same symbol by an abuse of notation:
\[\AJ_p\colon \mathrm{CH}_0^{n+1}(\mathcal{D})(F_v)\longrightarrow \left(M_{k_0}(X,F_v)\otimes \mathrm{Sym}^n eH_\mathrm{dR}^1(A/F_v)\right)^\vee.\]

\section{Generalized Heegner cycles}\label{section: generalized Heegner cycles} 
\subsection{Definition}
Let $\varphi\colon A_0\rightarrow A$ be an isogeny (defined over $\bar{K}$) of false elliptic curves, of degree prime to $N^+$, \emph{i.e.} whose kernel intersects the level structures of $A_0$ trivially. Let $P_{A}$ be the point on $\mathcal{C}$ corresponding to $A$ with level structure given by composing $\varphi$ with the level structure of $A_0$. We associate to any pair $(\varphi,A)$ a codimension $n+1$ cycle $\Upsilon_\varphi$ on $X_m$ by defining
$$\Upsilon_\varphi:=(\Gamma_\varphi)^m\subset (A\times A_0)^m$$
where $\Gamma_\varphi=\{(\varphi(x),x):x\in A_0\}\subset A\times A_0$ is the graph of $\varphi$. We then set
$$\Delta_\varphi:=\epsilon\Upsilon_\varphi.$$
The cycle $\Delta_\varphi$ of $\mathcal{D}$ is supported on the fiber above $P_{A}$ and has codimension $n+1$ in $\mathcal{A}^m\times A^m_0$, thus $\Delta_\varphi\in \mathrm{CH}^{n+1}(\mathcal{D})$. By \eqref{de Rham realization}, the cycle $\Delta_\varphi$ is homologous to zero.

We now compute the image of $\Delta_\varphi$ under the Abel-Jacobi map. 
The de Rham cohomology group 
$H^i_{\mathrm{dR}}(A/F)$ of a false elliptic curve $A$ defined over a field $F$ is equipped with the Poincar\'e pairing
 $\langle,\rangle_{H^i_\mathrm{dR}(A/F)}$, 
which we simply denote $\langle,\rangle_{A}$.  
Fix a nonvanishing differential $\omega_{A_0}$ in $e\Omega^1_{A_0/F}$. This fixed differential 
determines a class $\eta_{A_0}\in eH^1(A_0,\mathcal{O}_{A_0})$ dual to $\omega_{A_0}$ under the Poincar\'e duality pairing $\langle\ ,\ \rangle_{A_0}$, normalised so that 
$\langle\omega_{A_0},\eta_{A_0}\rangle_{A_0}=1$. 
We can view $\{\omega_{A_0},\eta_{A_0}\}$ as a basis of $eH^1_\mathrm{dR}(A_0/F)$ since the Hodge exact sequence
$$0\longrightarrow \Omega^1_{A_0/F}\longrightarrow H^1_\mathrm{dR}(A_0/F)\longrightarrow H^1(A_0,\mathcal{O}_{A_0})\longrightarrow 0$$
splits, because $A_0$ has $\mathrm{CM}$. This yields a basis for $\mathrm{Sym}^n eH^1_\mathrm{dR}(A_0/F)$ given by 
the elements $\omega^{j}_{A_0}\otimes\eta^{n-j}_{A_0}$ for $j$ an integer such that $0\leq j\leq n$. 

Let $\omega_f$ be the global section of the sheaf $\underline{\omega}^n\otimes\Omega^1_{\mathcal{C}}$
associated to the modular form over the Shimura curve $\mathcal{C}$ which corresponds to $f$ under the Jacquet-Langlands correspondence. The aim of this section is to compute 
\[\AJ_p(\Delta_\varphi)(\omega_f\otimes\omega^j\eta^{n-j})\] for
$j=0,\dots,n$, following \cite{BDP}, \cite{Brooks} and \cite{IS}.  

Define 
\[\mathcal{L}_{n,n}=\mathcal{L}_n\otimes\mathrm{Sym}^n eH_\mathrm{dR}^1(A_0/F).\]
The Gauss-Manin connection on $\mathcal{L}_n$ combined with the trivial connection on $H^1_\mathrm{dR}(A_0/F)$, gives rise to the connection 
\[\nabla\colon \mathcal{L}_{n,n}\longrightarrow\mathcal{L}_{n,n}\otimes\Omega^1_{\mathcal{C}}.\] 
The de Rham cohomology groups attached to $(\mathcal{L}_{n,n},\nabla)$ are defined to be the hypercohomology of the complex
$$0\longrightarrow\mathcal{L}_{n,n}\overset{\nabla}{\longrightarrow}\mathcal{L}_{n,n}\otimes \Omega^1_{\mathcal{C}}\longrightarrow 0.$$
We have 
\[H^1_\mathrm{dR}(\mathcal{C},\mathcal{L}_{n,n},\nabla)=H^1_\mathrm{dR}(\mathcal{C},\mathcal{L}_n,\nabla)\otimes\mathrm{Sym}^n \epsilon H^1_\mathrm{dR}(A_0/F).\]

Let $z_A$ be the point in $\mathcal{C}$ corresponding to $A$, 
and denote $X_{m,z_A}$ the fiber of $X_m$ over $z_A$. 
Recall the cycle class map 
\[\mathrm{cl}_{z_A}:\mathrm{CH}^n(X_{m,z_A})_\Q(F)\longrightarrow 
H^{2n}_\text{\'et}(\overline{X}_{m,z_A},\Q_p)(n).\]
Since $\Delta_\varphi=\epsilon\Upsilon_\varphi$, the 
image $\mathrm{cl}_{z_A}(\Delta_\varphi)$ of $\Delta_\varphi$ 
under the cycle class map $\mathrm{cl}_{z_A}$ 
belongs to $\epsilon H^{2n}_\text{\'et}(\overline{X}_{m,z_A},\Q_p)(n)$. 
Since $D_{\mathrm{st},F_v}\left(H^{2n}_\text{\'et}(X_m,\Q_p)\right)$ is isomorphic to the de Rham cohomology group  
$H^{2n}_\mathrm{dR}(X_m/F_v)$ as filtered Frobenius monodromy modules, we still denote $\mathrm{cl}_{z_A}(\Delta_\varphi)$, with a slight abuse of notation, the image 
of $\mathrm{cl}_{z_A}(\Delta_\varphi)$ under the functor $D_{\mathrm{st},F_v}$ and the isomorphism with Rham cohomology group; so we finally end up with the element 
\[\mathrm{cl}_{z_A}(\Delta_\varphi) \in \epsilon H^{2n}_\mathrm{dR}(X_m/F_v)=
H^1_\mathrm{dR}(\mathcal{C},\mathcal{L}_n,\nabla)\otimes \mathrm{Sym}^neH^1_\mathrm{dR}(A_0^m/F).\]

\subsection{Rigid analysis on Mumford curves} 
In this subsection we work over $\hat{\Q}_p^\unr$. To simplify the notation, we suppress the symbol $\hat{\Q}_p^\unr$; thus we write 
$\mathcal{C}=\mathcal{C}_{\hat{\Q}_p^\unr}$, $\mathcal{C}^\rig=\mathcal{C}_{\hat{\Q}_p^\unr}^\rig$, 
$\mathcal{L}_n=\mathcal{L}_{n,\hat{\Q}_p^\unr}$, $\mathcal{L}_n^\rig=\mathcal{L}_{n,\hat{\Q}_p^\unr}^\rig$ and $V_n=V_n\otimes_{\Q_p}\hat{\Q}_p^\unr$. 

\subsubsection{Structure of filtered Frobenius monodromy modules}
We first derive a description of 
$H^1_\mathrm{dR}(\mathcal{C}^\rig,\mathcal{L}_{n,n}^\rig,\nabla)$ by means of $V_n$-valued differential forms on $\mathcal{H}_p^\unr$. 

\begin{lemma}\label{AJprop1} 
$eH^1_\mathrm{dR}(\mathcal{G}/\hat{\HH})\simeq\mathcal{E}(V_1)$ as filtered convergent $F$-isocrystals on $\HH_p^\unr$. 
\end{lemma}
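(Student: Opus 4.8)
The plan is to deduce the statement by applying the idempotent $e$ to the isomorphism of filtered convergent $F$-isocrystals already established in \eqref{IS} and \eqref{IS2}, namely $\mathcal{H}^1_\mathrm{dR}(\mathcal{G}/\hatHp^\unr)\simeq V_\mathrm{cris}(\Phi)\otimes_{\Q_p}\mathcal{E}(\mathcal{O}_{\hatHp^\unr})\simeq\mathcal{E}(\M_2)$. Since $e$ acts through the $D$-action $j_G$, it operates on the constant factor $V_\mathrm{cris}(\Phi)$ only and leaves $\mathcal{E}(\mathcal{O}_{\hatHp^\unr})$ untouched; as the assignment $V\mapsto\mathcal{E}(V)=V\otimes_{\Q_p}\mathcal{E}(\mathcal{O}_{\hatHp^\unr})$ of Example \ref{example2} is $\Q_p$-linear and functorial in $V$, we get $eH^1_\mathrm{dR}(\mathcal{G}/\hat{\HH})\simeq\bigl(e\cdot V_\mathrm{cris}(\Phi)\bigr)\otimes_{\Q_p}\mathcal{E}(\mathcal{O}_{\hatHp^\unr})$, and it remains to identify $e\cdot V_\mathrm{cris}(\Phi)$ with $V_1$ as a filtered Frobenius coefficient space.

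For the underlying space I would invoke Lemma \ref{prop1}, which gives $V_\mathrm{cris}(\Phi)\simeq(\eta(\Phi)\otimes_{\Z_p}\Q_p)^\vee$, together with Lemma \ref{action on eta} and Proposition \ref{idempotent}, which show that $e$ acts as the projection onto the degree-zero graded piece. Hence $e\cdot V_\mathrm{cris}(\Phi)\simeq(\eta^0(\Phi)\otimes_{\Z_p}\Q_p)^\vee$, a two-dimensional space. By \eqref{universal eta} the $\GL_2$-action on $\eta^0(\Phi)\otimes_{\Z_p}\Q_p$ is the standard one, so a direct comparison of the two actions identifies its dual with $V_1=\mathcal{P}_1^\vee$ of Example \ref{example2}, under the dictionary $dx\leftrightarrow X^\vee$, $dy\leftrightarrow\mathbf{1}^\vee$ fixed in \S\ref{section differential operators}.

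It then remains to check compatibility with the two additional structures. For the Hodge filtration, Proposition \ref{identifications} and the diagram \eqref{diagram 2} identify $\underline{\omega}_{\mathcal{G}}^0\hookrightarrow\mathcal{L}_{\mathcal{G}}^0$ with the image of $du_0$, i.e. with the line spanned by $d\tau=z\,dx+dy$; under the dictionary above this is exactly $\ker(\mathrm{ev}_{X-z})=F^1\mathcal{E}(V_1)$, so the filtration of $eH^1_\mathrm{dR}(\mathcal{G}/\hat{\HH})$ matches $F^\bullet\mathcal{E}(V_1)$. For the Frobenius, I would use the description of $\phi_\mathrm{cris}(G)$ as $\phi_{V_\mathrm{cris}(G)}\otimes\sigma$ with $\phi_{V_\mathrm{cris}(G)}=j_G(\Pi)$ from Lemma \ref{prop1}, and verify that its restriction to the degree-zero piece is carried to $\phi_{V_1}=\rho_2\bigl(\smallmat 0p10\bigr)$ of Example \ref{example2}, using $\Pi\leftrightarrow\smallmat 0p10$ together with $\Pi^2=p$ and $\Pi a=\bar a\Pi$.

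The identification of the underlying $\Q_p$-spaces and the filtration match are routine given the propositions already proved. The main obstacle is the Frobenius compatibility: one must keep track of the bar-involution appearing in $\rho_2(A)B=B\bar A$ and confirm that $j_G(\Pi)$ restricted to the $e$-part corresponds precisely to $\rho_2\bigl(\smallmat 0p10\bigr)$, so that the two filtered $F$-isocrystal structures agree exactly and not merely up to a twist.
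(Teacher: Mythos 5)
Your identification of the underlying sheaf and of the filtration is correct, and up to that point your route is a legitimate, more hands-on alternative to the paper's: the paper argues purely representation-theoretically, decomposing $(\M_2,\rho_1,\rho_2)\simeq V_1\odot V_1$ (first factor carrying $\rho_1$, second carrying $\rho_2$), observing that $\iota_p(e)=\smallmat 1000$ so that $e$ cuts the second factor down to a line, and then quoting \eqref{IS2}; the functoriality of $V\leadsto\mathcal{E}(V)$ transports sheaf, filtration and Frobenius all at once. Your filtration check via Proposition \ref{identifications} and diagram \eqref{diagram 2} --- the image of $du_0$ being the line spanned by $d\tau=z\,dx+dy$, which is exactly $\ker(\mathrm{ev}_{X-z})=F^1\mathcal{E}(V_1)$ --- is precisely the content that the paper's argument leaves implicit, and it is right.

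The genuine gap is in the Frobenius step, the one you yourself flag as the main obstacle: the verification you propose cannot even be set up, because $j_G(\Pi)$ does \emph{not} restrict to the degree-zero piece. Since $\Pi a=\bar a\Pi$ for $a\in\Z_{p^2}$, the operator $j_G(\Pi)$ interchanges the two graded pieces ($\mathbb{M}^0\leftrightarrow\mathbb{M}^1$, hence $(\eta^0)^\vee\leftrightarrow(\eta^1)^\vee$, hence the $e$-part and the $(1-e)$-part); equivalently, $\sigma(e)=1-e$ because $\sigma(\sqrt{p_0})=-\sqrt{p_0}$, so the $\sigma$-semilinear Frobenius $\phi_\mathrm{cris}=\phi_{V_\mathrm{cris}}\otimes\sigma$ intertwines $e$ with $1-e$ instead of commuting with it. Thus ``the restriction of $j_G(\Pi)$ to the degree-zero piece'' does not exist as an endomorphism, and the Frobenius structure on $e\mathcal{H}^1_\mathrm{dR}$ is not obtained by restricting $\phi_\mathrm{cris}$; the operator that actually stabilizes the $e$-part is the twisted Frobenius $\phi'_\mathrm{cris}=j_G(\Pi)^{-1}\phi_\mathrm{cris}$ (the same operator already used in Lemma \ref{prop1} to cut out $V_\mathrm{cris}$), and it is this operator, not $j_G(\Pi)$, that must be matched with the Frobenius of $\mathcal{E}(V_1)$. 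Until that substitution is made, the compatibility carrying the real content of the lemma is unproved. A smaller but related imprecision occurs at the start: $e$ does not lie in $D$ but in $D\otimes_{\Q_p}\Q_{p^2}$, and the $D$-action alone does \emph{not} preserve the $\Q_p$-rational subspace $V_\mathrm{cris}(\Phi)$ (only the centralizer of $\Pi$ does); what preserves $V_\mathrm{cris}(\Phi)$ is the composite of $j_G(\mathbf{j})$ with scalar multiplication by $\sqrt{p_0}$, and seeing this requires the relation $\Pi\mathbf{j}=-\mathbf{j}\Pi$. So the conclusion ``$e$ operates on the constant factor only'' is true, but it is a computation, not a formality, and it is the same computation whose neglect later causes the Frobenius step to fail.
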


\begin{proof}
The representation $(\M_2,\rho_1,\rho_2)$ is isomorphic to $V_1\odot V_1=(V_1\otimes V_1,\sigma_1,\sigma_2)$, where $\sigma_1(A)(R_1\otimes R_2)=(A\cdot R_1)\otimes R_2$ and $\sigma_2(A)(R_1\otimes R_2)=R_1\otimes (\overline{A}^t\cdot R_2)$.
Recall that the isomorphism $\iota_p$ satisfies $\iota_p(e)=
\smallmat 1000$. The result then follows from \eqref{IS2}.
\end{proof}

Write $\mathcal{V}_n=\mathcal{E}(V_n)$ to simplify the notation. It follows from Lemma \ref{AJprop1} that 
\[H^1_\mathrm{dR}(\mathcal{C}^\rig,\mathcal{L}_{n}^\rig,\nabla)\simeq 
H^1_\mathrm{dR}(\mathcal{C}^\rig,\mathcal{V}_n)\] as filtered Frobenius monodromy modules, 
and the right hand side is isomorphic to the $\hat{\Q}_p^\unr$-vector space of $V_n$-valued, 
$\Gamma_p$-invariant differential forms of the second kind on $\HH_p^\unr$ modulo 
forms $\omega$ such that $\nabla(\omega)=0$. 
Define 
\[\mathcal{V}_{n,n}=\mathcal{V}_n\otimes\mathrm{Sym}^neH^1_\mathrm{dR}(A_0^m).\] 
Then we have 
\[H^1_\mathrm{dR}(\mathcal{C}^\rig,\mathcal{L}_{n,n}^\rig,\nabla)\simeq 
H^1_\mathrm{dR}(\mathcal{C}^\rig,\mathcal{V}_{n,n})\] as filtered Frobenius monodromy modules. 

We now describe the monodromy operator. 
Let $\mathcal{T}$ 
denote the Bruhat-Tits tree of $\PGL_2(\Q_p)$, and denote $\overrightarrow{\mathcal{E}}$ and $\mathcal{V}$ the set of oriented edges and vertices of $\mathcal{T}$, respectively. If $e=(v_1,v_2)\in\overrightarrow{\mathcal{E}}$, we denote by $\overline{e}$ the oriented edge $(v_2,v_1)$. Let $C^0(V_n)$ be the set of maps $\mathcal{V}\rightarrow V_n$ 
and 
$C^1( V_n)$ the set of maps 
$\overrightarrow{\mathcal{E}}\rightarrow V_n$ such that $f(\overline{e})=-f(e)$ for all $e\in\overrightarrow{\mathcal{E}}$. 
The group $\Gamma_p$ acts on $f\in C^i( V_n)$ by $\gamma(f)=\gamma\circ f\circ \gamma^{-1}$. Let 
\[\epsilon\colon C^1( V_n)^\Gamma\longrightarrow H^1(\Gamma,M)\] be the connecting homomorphism arising from the short exact sequence
$$0\longrightarrow  V_n\longrightarrow C^0( V_n)\overset{\delta}{\longrightarrow} C^1(M)\longrightarrow 0,$$
where $\delta$ is the homomorphism defined by $\delta(f)(e)=f(V_n)-f(v_2)$ for $e=(V_n,v_2)$.
The map $\epsilon$ induces the following isomorphism that we also denote by $\epsilon$
$$\epsilon\colon C^1( V_n)^\Gamma \big/ C^0( V_n)^\Gamma\longrightarrow H^1( V_n).$$

Let $A_e\subset\HH_p^\unr$ be the oriented annulus in $\HH_p$ corresponding to $e$ and $U_v\subset\HH_p^\unr$ be the affinoid corresponding to $v\in\mathcal{V}$, which are obtained as inverse images of the reduction map (see \cite[page 342]{IS}). 
Let $\omega$ be a $V_n$-valued $\Gamma$-invariant  differential of the second kind on $\HH_p$. We define $I(\omega)$ to be the map which assigns to an oriented edge $e\in\overrightarrow{\mathcal{E}}$ the value $I(\omega)(e)=\mathrm{Res}_e(\omega)$, where $\mathrm{Res}_e$ denotes the annular residue along $A_e$. 
If $\omega$ is exact, $I(\omega)=0$. Thus $I$ gives a well-defined map
\begin{equation}\label{I}
I\colon H^1_\mathrm{dR}(\mathcal{C}^\rig,\mathcal{V}_n)\longrightarrow C^1(V_n)^{\Gamma_p}.\end{equation}
Since the set $\{U_v\}_{v\in\mathcal{V}}$ is an admissible covering of $\HH_p$, the Mayer-Vietoris sequence yields an embedding
$$C^1(V_n)^{\Gamma_p}\big/C^0(V_n)^{\Gamma_p}\longmono H^1_\mathrm{dR}(\mathcal{C}^\rig,\mathcal{V}_n).$$
Precomposing with $\epsilon$, we obtain an embedding
\begin{equation}\label{iota}
\iota\colon H^1\left(\Gamma_p,V_n\right)\longmono H^1_\mathrm{dR}(\mathcal{C}^\rig,\mathcal{V}_n)\end{equation}
This map admits a natural left inverse
\begin{equation}\label{P}
P\colon H^1_\mathrm{dR}(\mathcal{C}^\rig,\mathcal{V}_n)\longrightarrow H^1\left(\Gamma_p,V_n\right),\end{equation} which takes  
$\omega$ to the class of the cocycle $\gamma\mapsto\gamma(F_\omega)-F_\omega$, where $F_\omega$ is a Coleman primitive of $\omega$ satisfying $\nabla(F_\omega)=\omega$.
 
Define now the monodromy operator $N_n$ on $H^1_\mathrm{dR}(\mathcal{C}^\rig,\mathcal{V}_n)$ as the composite $\iota\circ(-\epsilon)\circ I$.
The monodromy operator $N_{S_n}$ on the filtered $(\phi,N)$-module 
\[S_n=\mathrm{Sym}^neH^1_\mathrm{dR}(A_0^m)\] is trivial. Therefore, the monodromy operator on $D_{\mathrm{st},\hat{\Q}_p^\mathrm{unr}}(H^{2n+1}_p(\mathcal{D}))$ is given by
\begin{equation}\label{monodromy}
N=\mathrm{id}_n\otimes N_{S_n}+N_n\otimes\mathrm{id}_{S_n}.\end{equation}

We now describe the Frobenius operator on $D_{\mathrm{st},\hat{\Q}_p^\mathrm{unr}}(H^{2n+1}_p(\mathcal{D}))$. First, 
$H^1(\Gamma_p,V_n)$ has a Frobenius endomorphism induced by the map $p^\frac{n}{2}\otimes \sigma$ on $V_n$, where $\sigma$ denotes the absolute Frobenius automorphism on $\hat{\Q}_p^\mathrm{unr}$.
By \cite[page 348]{IS}, there exists a unique operartor 
$\Phi_n$ on $H^1_\mathrm{dR}(\mathcal{C}^\rig,\mathcal{V}_n)$ satisying $N_n\Phi_n=p\Phi_n N_n$ and which is compatible, with respect to $\iota$, 
with the Frobenius on $H^1(\Gamma_p,V_n)$.
On the other hand, the Frobenius on the filtered $(\phi,N)$-module $S_n$ is given by $\Phi_{S_n}=p^{\frac{n}{2}}\otimes\sigma$ acting on the underlying vector space $S_n$.
The Frobenius operator on $D_{\mathrm{st},\hat{\Q}_p^\mathrm{unr}}(H^{2n+1}_p(\mathcal{D}))$ is then given by
$$\Phi=\Phi_n\otimes \Phi_{S_n}.$$
Note that $N$ and $\Phi$ satisfy the relation $N\Phi=p\Phi N$.

For any $D\in\mathrm{MF}_{\hat{\Q}_p^\unr}^{\phi,N}$, write 
$D=\oplus_{\lambda}D_\lambda$ for its slope decomposition, 
where $\lambda\in\Q$
(\cite[(2)]{IS}). 
%
We now note that $N$ induces an isomorphism 
\begin{equation}\label{lemmaslope}
N:H^1_\mathrm{dR}(\mathcal{C}^\rig,\mathcal{V}_{n,n})_{n+1}\simeq 
H^1_\mathrm{dR}(\mathcal{C}^\rig,\mathcal{V}_{n,n})_{n}.\end{equation} 
To see this, note that since the monodromy operator $N$ and the Frobenius $\Phi$ on $H^1_\mathrm{dR}(\mathcal{C}^\rig,\mathcal{V}_{n,n})$ satisfy the relation $N\Phi=p\Phi N$, we have 
$N\left(H^1_\mathrm{dR}(\mathcal{C}^\rig,\mathcal{V}_{n,n})_{n+1}\right)\subseteq
H^1_\mathrm{dR}(\mathcal{C}^\rig,\mathcal{V}_{n,n})_{n}.$ Since 
$S_n$ is isotypical of slope $n/2$, we have 
$$H^1_\mathrm{dR}(\mathcal{C}^\rig,\mathcal{V}_{n,n})_{n+1}=H^1_\mathrm{dR}(\mathcal{C}^\rig,\mathcal{V}_{n,n})_{n/2+1}\otimes S_n$$
and 
$$H^1_\mathrm{dR}(\mathcal{C}^\rig,\mathcal{V}_{n,n})_{n}=H^1_\mathrm{dR}(\mathcal{C}^\rig,\mathcal{V}_{n,n})_{n/2}\otimes S_n.$$  
By \cite[Lemma 6.1]{IS}, the operator $N_n$ on $H^1_\mathrm{dR}(\mathcal{C}^\rig,\mathcal{V}_n)_{n+1}$ is an isomorphism, the therefore the same is true in \eqref{lemmaslope}
by the definition of the monodromy operator $N$ given in \eqref{monodromy}.

\subsubsection{Calculation of Ext groups by the Gysin sequence}For $f\in M_k(\Gamma)$ and $v\in S_n$, 
\eqref{lemmaslope} allows us to use the description in \cite[Lemma 2.1]{IS} to calculate some Ext groups. 

Define $U_{z_A}=\mathcal{C}^\rig-\{z_A\}$, and  put 
\begin{equation}\label{iso2}
H^1_\mathrm{dR}(U_{z_A},\mathcal{V}_{n,n})=
H^1_\mathrm{dR}(U_{z_A},\mathcal{V}_{n})\otimes S_n.\end{equation}
Let $\mathrm{Res}_{z_A}:H^1_\mathrm{dR}(U_{z_A},\mathcal{V}_{n}^\rig)\rightarrow (\mathcal{V}_n^\rig)_{z_A}$ be the residue map at a point $z_A$.  
We have the Gysin sequence (\cite[Theorem 5.13]{IS})
in $\mathrm{MF}^{\phi,N}_{\hat{\Q}_p^\unr}$: 
\begin{equation}\label{5.13IS}
0\longrightarrow H^1_\mathrm{dR}(\mathcal{C}^\rig,\mathcal{V}_{n,n})[-(n+1)]\longrightarrow
H^1_\mathrm{dR}(U_{z_A},\mathcal{V}_{n,n})[-(n+1)]\overset{\mathrm{Res}_{z_A}}\longrightarrow
\left(V_n\otimes S_n\right)[-n]\longrightarrow 0\end{equation}
where we write $V_n$ for the stalk $(\mathcal{V}_n)_{z_A}$ of $\mathcal{V}_n$ at $z_A$. 

We have the cycle class map 
{\[
\cl_A=\cl_{(A^m\times A_0^m,\epsilon_M)}^{(n)}: \CH^{n}((A^m\times A_0^m,\epsilon_{A\times A_0}))\longrightarrow \Gamma\left((V_n\otimes S_n)[-n]\right)\]}
where for a filtered Frobenius monodromy module $M$ with filtration $F^\bullet(M)$, Frobenius $\phi$ and monodromy $N$, we put $\Gamma(M)=F^0(M)\cap M^{\phi=\mathrm{id},N=0}$. 
Next, from \eqref{5.13IS} we obtain a connecting homomorphism in the sequence of $\mathrm{Ext}$ groups  
\[\begin{split}
\Gamma\left((V_n\otimes S_n)[-n]\right)\overset\partial\longrightarrow &
\mathrm{Ext}^1_{\mathrm{MF}_{\hat{\Q}_p^\unr}^{\phi,N}}\left(\hat{\Q}_p^\unr,H^1_\mathrm{dR}(\mathcal{C}^\rig,\mathcal{V}_{n,n})_{n}[-(n+1)]\right)\\
&\simeq \mathrm{Ext}^1_{\mathrm{MF}_{\hat{\Q}_p^\unr}^{\phi,N}}\left(\hat{\Q}_p^\unr[n+1],H^1_\mathrm{dR}(\mathcal{C}^\rig,\mathcal{V}_{n,n}))_{n}\right)\\
&\simeq \left(M_k(\Gamma)\otimes S_n\right)^\vee\end{split}\]
where the last isomorphism comes from \cite[Lemma 2.1]{IS}.   
On the other hand, we have a canonical map 
\[
i:\CH^{n}((A^m\times A_0^m,\epsilon_{A\times A_0}))\longrightarrow \CH^{n+1}(\mathcal{D}) .\]
The definition of the Abel-Jacobi map shows 
that the following diagram is commutative: 
\begin{equation}\label{IS7.3}\xymatrix{
i^{-1}\left(\CH^{n+1}_0(\mathcal{D})\right)\ar[r]^-{\cl_A}\ar[d]^i  & 
\Gamma\left((V_n\otimes S_n)[-n]\right)\ar[d]^\partial\\
\CH^{n+1}_0(\mathcal{D})\ar[r]^-{\AJ_p}&\left(M_k(\Gamma)\otimes S_n\right)^\vee.}
\end{equation}
Then $\AJ_p(\Delta_\varphi)$ is the 
extension class determined by the following diagram (in which the right square is cartesian) 
\begin{equation}\label{diagramAJ}
\xymatrix{0\ar[r] & 
H^1_\mathrm{dR}(\mathcal{C}^\rig,\mathcal{V}_{n,n})\ar[r]^{j_*} & 
H^1_\mathrm{dR}(U_{z_A},\mathcal{V}_{n,n})\ar[r]^-{{\mathrm{Res}_{z_A}}}& 
\left((V_n\otimes S_n)\right)[1]\ar[r]& 0\\
0\ar[r] & 
H^1_\mathrm{dR}(\mathcal{C}^\rig,\mathcal{V}_{n,n})\ar[r] \ar@{=}[u]& 
E\ar[u]\ar[r]&\hat{\Q}_p^\unr[n+1]\ar[u] \ar[r]&0}\end{equation}
where the vertical left map sends $1\longmapsto\cl_A(\Delta_\varphi)[n+1]$.

\subsubsection{Computation of the Abel-Jacobi map}
Choose $\alpha\in H^1_\mathrm{dR}(U_{z_A},\mathcal{V}_{n,n})_{n+1}$ such that 
\begin{equation}\label{res and class}
\mathrm{Res}_{z_A}(\alpha)=\cl_A(\Delta_\varphi)\end{equation} 
and $N(\alpha)=0$.  
Choose $\beta$ in $H^1_\mathrm{dR}(\mathcal{C}^\rig,\mathcal{V}_{n,n})$ such that 
\[j_*(\beta)\equiv\alpha\mod F^{n+1}\left(H^1_\mathrm{dR}(U_{z_A},\mathcal{V}_{n,n})\right).\]
Then the image of the extension $\cl_{A}(\Delta_\varphi)$ in 
\[H^1_\mathrm{dR}(\mathcal{C}^\rig,\mathcal{V}_{n,n})
/F^{n+1}\left(H^1_\mathrm{dR}(\mathcal{C}^\rig,\mathcal{V}_{n,n})\right)\simeq (M_k(\Gamma)\otimes S_n)^\vee\] is the class of $\beta$ (which we denote by the same symbol $\beta$) in this quotient
(for the isomorphism, see \cite[Proposition 6.1]{IS}).  

We have the Poincar\'e duality pairing $\langle,\rangle_{n,n}:\mathcal{V}_{n,n}\otimes_{\mathcal{O}_{\mathcal{C}^\rig}} \mathcal{V}_{n,n}\rightarrow\mathcal{O}_{\mathcal{C}^\rig}$ arising from Poincar\'e duality on the fibers $A\times A_0$. We therefore obtain a Poincar\'e pairing, still denoted $\langle,\rangle_{n,n}$,  
\[\langle,\rangle_{n,n}:H^1_\mathrm{dR}(\mathcal{C}^\rig,\mathcal{V}_{n,n})\times 
H^1_\mathrm{dR}(\mathcal{C}^\rig,\mathcal{V}_{n,n})
\overset\cup\longrightarrow H^1_\mathrm{dR}(\mathcal{C}^\rig,\mathcal{V}_{n,n}\otimes\mathcal{V}_{n,n})
\overset{\langle,\rangle_{n,n}}\longrightarrow
H^2_\mathrm{dR}(\mathcal{C}^\rig,\mathcal{O}_{\mathcal{C}^\rig})\simeq\hat{\Q}_p^\unr.\] 
Let $\omega_f$ be the class in 
$F^{n+1}\left(H^1_\mathrm{dR}(\mathcal{C}^\rig,\mathcal{V}_{n})\right)$ corresponding to 
$f\in M_k(\Gamma)$. Then by definition 
\begin{equation}\label{first equality AJ}
\AJ_p(\Delta_\varphi)(f\otimes v)=\langle\omega_f\otimes v,\beta\rangle_{n,n}.\end{equation}
By \cite[(39)]{IS} and the fact that  
$S_n$ is isotypical of slope $n/2$, we obtain a decomposition 
\[H^1_\mathrm{dR}(\mathcal{C}^\rig,\mathcal{V}_{n,n})\simeq H^1_\mathrm{dR}(\mathcal{C}^\rig,\mathcal{V}_{n,n})_n\oplus F^{n+1}\left(H^1_\mathrm{dR}(\mathcal{C}^\rig,\mathcal{V}_{n,n})\right).\]
We may therefore assume that the element $\beta$ considered above belongs to $H^1_\mathrm{dR}(\mathcal{C}^\rig,\mathcal{V}_{n,n})_n$. 

We now compute $\langle\omega_f\otimes v,\beta\rangle_{n,n}$. 
By \cite[Theorem 6.4]{IS}, 
\begin{equation}\label{eq25}
\ker(N_n)=\iota\left(H^1(\Gamma,V_n))\right)=H^1_\mathrm{dR}(\mathcal{C}^\rig,\mathcal{V}_{n})_{n/2}.\end{equation} 
To simplify the notation we put 
\[H^1(\Gamma,V_{n,n})=H^1\left(\Gamma,V_n\right)\otimes S_n.\]
We now extend $\iota$ to a map, still denoted by the same symbol, 
\[\iota=\iota\otimes\mathrm{id}_{(\mathcal{V}_n)_{z_{A_0}}}\colon H^1(\Gamma,V_{n,n})\longmono H^1_\mathrm{dR}(\mathcal{C}^\rig,\mathcal{V}_{n,n})\]
and \eqref{eq25} shows that there exists an isomorphisms 
 $\ker(N)=\iota\left(H^1(\Gamma,V_{n,n})\right)$, 
so we may also assume $\beta=\iota(c)$ for some $c\in  H^1(\Gamma,V_{n,n})$. 
Let $C_\mathrm{har}(V_n)^\Gamma$ denote the 
$\Q_p$-vector space of $\Gamma$-invariant $V_n$-valued 
harmonic cocycles and denote 
\[\langle,\rangle_{\Gamma}': 
C_\mathrm{har}(V_{n})^\Gamma\otimes H^1(\Gamma,V_n)\longrightarrow \Q_p\]
the pairing introduced in \cite[(75)]{IS}. 
To simplify the notation, we set 
\[C_\mathrm{har}(V_{n,n})^\Gamma=C_\mathrm{har}(V_{n})^\Gamma\otimes S_n.\]
We then define the pairing 
\[\langle,\rangle_{\Gamma}: 
C_\mathrm{har}(V_{n,n})^\Gamma\otimes H^1(\Gamma,V_{n,n})\longrightarrow \Q_p\]
by $\langle,\rangle_{\Gamma}=\langle,\rangle_{\Gamma}'\otimes\langle,\rangle_{A_0}$, where $\langle,\rangle_{A_0}$ is the Poincar\'e pairing on $A_0$. 

\begin{lemma} \label{lemma5.2}
$\langle\omega_f\otimes v,\beta\rangle_{n,n}=-\langle I(\omega_f)\otimes v,c\rangle_\Gamma.$\end{lemma}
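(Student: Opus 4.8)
The plan is to reduce the identity to the corresponding statement for the coefficient system $\mathcal{V}_n$ alone, for which it is essentially the residue-theoretic pairing formula of Iovita--Spiess, and then to restore the tensor factor coming from $A_0$. Since $\mathcal{V}_{n,n}=\mathcal{V}_n\otimes S_n$ with $S_n=\mathrm{Sym}^neH^1_\mathrm{dR}(A_0^m)$ carrying the trivial connection and trivial monodromy, every ingredient of the statement factors through this tensor decomposition: the Poincar\'e pairing decomposes as $\langle,\rangle_{n,n}=\langle,\rangle_n\otimes\langle,\rangle_{A_0}$, the residue map equals $I\otimes\mathrm{id}_{S_n}$, the embedding equals $\iota\otimes\mathrm{id}_{S_n}$ by its very definition preceding the statement, and $\langle,\rangle_\Gamma=\langle,\rangle_{\Gamma}'\otimes\langle,\rangle_{A_0}$. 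Writing $c\in H^1(\Gamma,V_{n,n})=H^1(\Gamma,V_n)\otimes S_n$ as a sum of elementary tensors $c_0\otimes w$ and using bilinearity, the common factor $\langle v,w\rangle_{A_0}$ cancels and the identity reduces to
\[\langle\omega_f,\iota(c_0)\rangle_n=-\langle I(\omega_f),c_0\rangle_{\Gamma}'\]
for $\omega_f\in F^{n+1}H^1_\mathrm{dR}(\mathcal{C}^\rig,\mathcal{V}_n)$ and $c_0\in H^1(\Gamma,V_n)$.

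For this reduced identity the key point is that $\beta_0:=\iota(c_0)$ lies in $\ker(N_n)=\iota(H^1(\Gamma,V_n))$ by \eqref{eq25}, so that the Poincar\'e pairing of the second-kind class $\omega_f$ against $\beta_0$ can be evaluated by residues on the Bruhat--Tits tree. Concretely I would unwind the \v{C}ech/Mayer--Vietoris description of $\iota$ from \eqref{iota}: on the admissible cover $\{U_v\}_{v\in\mathcal{V}}$ the class $\iota(c_0)$ is represented by a $1$-cocycle whose jump across the annulus $A_e$ is prescribed by the value of $c_0$ on $e$. Taking the cup product with $\omega_f$ and applying the trace $H^2_\mathrm{dR}(\mathcal{C}^\rig,\mathcal{O}_{\mathcal{C}^\rig})\simeq\hat{\Q}_p^\unr$ converts the cup product into a sum, over a fundamental domain of oriented edges, of the annular residues $\mathrm{Res}_e(\omega_f)=I(\omega_f)(e)$ paired against $c_0(e)$; this sum is exactly $-\langle I(\omega_f),c_0\rangle_{\Gamma}'$ in the normalisation of \cite[(75)]{IS}, the minus sign being the one built into the definition $N_n=\iota\circ(-\epsilon)\circ I$ of the monodromy operator. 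As this computation is carried out in \cite{IS}, I would invoke the relevant pairing formula there rather than reproducing it.

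The delicate part is the bookkeeping of signs and normalisations needed to match our pairing $\langle,\rangle_{n,n}$ and our maps $I$, $\iota$, $\epsilon$ with those of \cite{IS}: the residue convention, the orientation of the edges in $\overrightarrow{\mathcal{E}}$, and the sign in the connecting map $\epsilon$ all enter, and it is precisely their combination that produces the global minus sign. A second, routine point that must nonetheless be checked to legitimise the factorisation in the first step is that the Coleman primitive $F_\omega$ defining $P$ in \eqref{P} respects the trivial connection on the $S_n$-factor, so that $I$, $\iota$, and the Poincar\'e pairing genuinely decouple along $\mathcal{V}_{n,n}=\mathcal{V}_n\otimes S_n$.
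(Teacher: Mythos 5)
Your proposal is correct and follows essentially the same route as the paper: decompose $\beta=\iota(c)$ and $c$ along the tensor factorisation $\mathcal{V}_{n,n}=\mathcal{V}_n\otimes S_n$ (using that $\langle,\rangle_{n,n}$ and $\langle,\rangle_\Gamma$ are defined as tensor products of pairings), thereby reducing to the identity $\langle\omega_f,\iota(c_0)\rangle_{\mathcal{V}_n}=-\langle I(\omega_f),c_0\rangle'_\Gamma$ for the coefficient system $\mathcal{V}_n$, which the paper likewise does not reprove but cites directly as Theorem 10.2 of Iovita--Spiess. Your additional sketch of the residue/cup-product mechanism behind that theorem, and the remark about the trivial connection on the $S_n$-factor, are consistent with but go beyond what the paper records.
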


\begin{proof}
Write $\beta=\sum_i\beta_i\otimes v_i$ and $c=\sum_jc_j\otimes w_j$. 
Recall that $\iota(c)=\beta$ with $\iota$ injective, so that $\iota(\beta_i)=c_i$ and $v_i=w_i$ for all $i$. 
By \cite[Theorem 10.2]{IS} we know that for each $i$ we have 
$\langle \omega_f,\beta_i\rangle_{\mathcal{V}_n}=-\langle I(\omega_f),c_i\rangle'_\Gamma$. 
The definitions of $\langle,\rangle_{\mathcal{V}_{n,n}}$ and $\langle,\rangle_\Gamma$ imply the result. 
\end{proof}

Write $\alpha-j_*(\beta)=\sum_i\gamma_i\otimes v_i$. 
For each $i$, let $\chi_i$ be a $\Gamma$-invariant 
${V}_{n}$-valued meromorphic differential 
form on $\mathcal{H}_p$ which is holomorphic outside 
$\pi^{-1}(U_{z_A})$, with a simple pole at $z_A$, 
and whose class $[\chi_i]$ in $F^{\frac{n}{2}+1}\left(H^1_\mathrm{dR}(U_{z_A},\mathcal{V}_{n})\right)$
represents $\gamma_i$. Then the class of $\chi=\sum_i\chi_i\otimes v_i$ represents $\alpha-j_*(\beta)$. 

Having identified 
$H^1_\mathrm{dR}(\mathcal{C}^\rig,\mathcal{V}_{n})$ with the $\hat{\Q}_p^\unr$-vector space of $\Gamma$-invariant $V_n$-valued differential forms of the second kind on $\mathcal{H}_p$ modulo horizontal forms for $\nabla$, denote $F_{\omega_f}\in H^0_\mathrm{dR}(\mathcal{C}^\rig,\mathcal{V}_{n})$ the Coleman primitive of $\omega_f$ (\cite[\S 2.3]{deshalit}).  

\begin{lemma}\label{lemma5.3} 
$-\langle I(\omega_f)\otimes v,c\rangle_\Gamma=\langle F_{\omega_f}(z_A)\otimes v,\mathrm{Res}_{z_A}(\chi)\rangle_{A\times A_0}$, where $\langle,\rangle_{A\times A_0}$ is the Poincar\'e pairing on $A\times A_0$.\end{lemma}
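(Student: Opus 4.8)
The plan is to reduce the identity to a $p$-adic residue formula of Riemann--bilinear type, in the style of the computations in \cite[\S10]{IS} and \cite[\S2.3]{deshalit}. Since $v\in S_n$ occurs on both sides and both pairings restrict to the Poincar\'e pairing $\langle,\rangle_{A_0}$ on the $S_n$-factor, I would first expand $c$ and $\chi$ into their $S_n$-components and match coefficients, reducing the statement to the purely $\mathcal{V}_n$-valued identity
\[
-\langle I(\omega_f),c\rangle_\Gamma'=\langle F_{\omega_f}(z_A),\mathrm{Res}_{z_A}(\chi)\rangle_{\mathcal{V}_n},
\]
where now $c\in H^1(\Gamma,V_n)$, $\chi$ is the $\Gamma$-invariant $V_n$-valued meromorphic form representing $\alpha-j_*(\beta)$, and $F_{\omega_f}$ is the Coleman primitive of $\omega_f$. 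Here the $S_n$-factor carries the trivial connection, so only the $\mathcal{V}_n$-part is seen by the Coleman integration and the residues.

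Next I would unwind both sides into data on $\mathcal{H}_p^\unr$. On the left, $\beta=\iota(c)$ lies in $\ker(N_n)$ by \eqref{eq25}, so the left inverse \eqref{P} identifies $c$ with the class of the cocycle $\gamma\mapsto\gamma(F_\beta)-F_\beta$, where $F_\beta$ is a Coleman primitive of a form representing $\beta$; combined with the defining formula of $\langle,\rangle_\Gamma'$ in \cite[(75)]{IS}, this writes $\langle I(\omega_f),c\rangle_\Gamma'$ as a sum over a fundamental domain for $\Gamma$ on the Bruhat--Tits tree of local terms built from the annular residues $I(\omega_f)(e)$ and the primitive. On the right, I would consider the scalar meromorphic one-form $\langle F_{\omega_f},\chi\rangle_{\mathcal{V}_n}$ on $\mathcal{H}_p^\unr$: it is holomorphic away from the $\Gamma$-orbit of $z_A$ and, since $F_{\omega_f}$ is holomorphic at $z_A$, has there a simple pole with residue exactly $\langle F_{\omega_f}(z_A),\mathrm{Res}_{z_A}(\chi)\rangle_{\mathcal{V}_n}$.

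The heart of the argument is a $p$-adic residue (Stokes) theorem on the admissible covering $\{U_v\}_{v\in\mathcal{V}}$ of $\mathcal{H}_p^\unr$, companion to the bilinear relation \cite[Theorem 10.2]{IS} already used in Lemma \ref{lemma5.2}: the combinatorial tree-pairing of $I(\omega_f)$ against $c$ equals, up to sign, the sum of the residues of $\langle F_{\omega_f},\chi\rangle_{\mathcal{V}_n}$ over the poles of $\chi$ lying in a fundamental domain. Because $\chi$ was chosen holomorphic on $\pi^{-1}(U_{z_A})$ with its only pole at $z_A$ (recall \eqref{res and class}), this residue sum collapses to the single term computed above, giving the claimed equality and, together with Lemma \ref{lemma5.2} and \eqref{first equality AJ}, feeding into the final formula for $\AJ_p(\Delta_\varphi)$.

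The main obstacle is precisely this residue theorem, with its normalization and sign. The Coleman primitive $F_{\omega_f}$ is not $\Gamma$-invariant but only equivariant up to the cocycle it defines, so I would have to telescope the local contour integrals around the vertices and edges of the tree, fold the sum onto a fundamental domain using the $\Gamma$-invariance of $\omega_f$ and $\chi$, and identify the surviving boundary contributions with the definition of $\langle,\rangle_\Gamma'$. Controlling the cancellations in this telescoping, and tracking the sign through it and through the identifications \eqref{iota}--\eqref{P}, is where the genuine work lies; the remaining manipulations are formal.
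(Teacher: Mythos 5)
Your opening reduction coincides exactly with the paper's: write $c=\sum_j c_j\otimes w_j$, use $\langle,\rangle_\Gamma=\langle,\rangle_\Gamma'\otimes\langle,\rangle_{A_0}$ to strip off the $S_n$-factor, and reduce the lemma to the scalar identity
$-\langle I(\omega_f),c_j\rangle_\Gamma'=\langle F_{\omega_f}(z_A),\mathrm{Res}_{z_A}(\chi_j)\rangle_{A}$
on the $\mathcal{V}_n$-part. Where you diverge is in what you do with that identity. The paper does not prove it: it is precisely \cite[Corollary 10.7]{IS}, and the paper's proof consists of the reduction plus this citation, together with the one substantive check that the citation is legitimate, namely that the proof of \cite[Theorem 10.6]{IS} carries over to the quaternionic setting because the analogues of \cite[(87), (88)]{IS} hold here. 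Your plan is instead to reprove the identity from scratch via a $p$-adic residue theorem on the covering $\{U_v\}_{v\in\mathcal{V}}$, telescoping local contour contributions over the Bruhat--Tits tree and folding onto a fundamental domain; that is, in outline, exactly the argument of \cite[Theorem 10.6]{IS}, so the ``main obstacle'' you name is not open ground but a result already in the literature.

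The issue is that, as written, your proof stops at exactly that point: you state yourself that the cancellation and sign bookkeeping in the telescoping --- which is the entire content of the residue theorem, and is genuinely delicate because $F_{\omega_f}$ is not $\Gamma$-invariant but only invariant up to the cocycle $\gamma\mapsto\gamma(F_{\omega_f})-F_{\omega_f}$ --- is left undone. So the heart of the lemma is asserted rather than proved. The fix is either the paper's economical route (cite \cite[Corollary 10.7]{IS}, and verify that the analogues of \cite[(87), (88)]{IS} hold, which is the only point where the present quaternionic setting differs from that of \cite{IS}), or to actually carry out the tree-residue argument you sketch; in the latter case you would in effect be reproducing the proof of \cite[Theorem 10.6]{IS}, including the normalization conventions for $\iota$, $P$ and the pairing of \cite[(75)]{IS} that produce the minus sign in the statement.
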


\begin{proof}
As in the proof of Lemma \ref{lemma5.2} write $c=\sum_jc_j\otimes w_j$. By definition, 
\[\langle I(\omega_f)\otimes v,c\rangle_\Gamma=\sum_j\langle I(\omega_f),c_j\rangle_\Gamma'\cdot\langle v,w_j\rangle_{A_0}.\]
By \cite[Corollary 10.7]{IS},  
\[\langle I(\omega_f),c_j\rangle_\Gamma'=\langle F_{\omega_f}(z_A),\mathrm{Res}_{z_A}(\chi_j)\rangle_{A}\]
where in the last pairing is the Poincar\'e pairing on $A$; note that we can apply 
\cite[Corollary 10.7]{IS} because the proof of \cite[Theorem 10.6]{IS} still holds in our setting taking because 
the analogues of \cite[(87), (88)]{IS} are true. 
The result 
follows now from the definition of the pairing $\langle,\rangle_{\mathcal{V}_{n,n}}$.
\end{proof}

\begin{lemma}\label{lemma9.2} $
\AJ_p(\Delta_\varphi)(\omega_f\otimes \omega_{A_0}^{j}\eta_{A_0}^{n-j})=\langle F_f(z_A)\otimes\omega_{A_0}^{j}\eta_{A_0}^{n-j},\mathrm{cl}_{z_A}(\Delta_\varphi)\rangle_{A\times A_0}.$\end{lemma}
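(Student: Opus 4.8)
The plan is to read off the desired formula by concatenating the three equalities already in hand and then reducing everything to a single residue identification. First I would specialize the vector $v$ to the basis element $v=\omega_{A_0}^{j}\eta_{A_0}^{n-j}$ of $\mathrm{Sym}^n eH^1_\mathrm{dR}(A_0/F)$ and chain together \eqref{first equality AJ}, Lemma \ref{lemma5.2} and Lemma \ref{lemma5.3}, obtaining
\[
\AJ_p(\Delta_\varphi)(\omega_f\otimes v)
=\langle\omega_f\otimes v,\beta\rangle_{n,n}
=-\langle I(\omega_f)\otimes v,c\rangle_{\Gamma}
=\langle F_{\omega_f}(z_A)\otimes v,\mathrm{Res}_{z_A}(\chi)\rangle_{A\times A_0}.
\]
At this point the entire statement is reduced to the single identity $\mathrm{Res}_{z_A}(\chi)=\mathrm{cl}_{z_A}(\Delta_\varphi)$, together with the harmless bookkeeping identifications $F_{\omega_f}=F_f$ and $\mathrm{cl}_A(\Delta_\varphi)=\mathrm{cl}_{z_A}(\Delta_\varphi)$.

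The one step that genuinely requires an argument is this residue computation. Recall that the class $\chi=\sum_i\chi_i\otimes v_i$ represents $\alpha-j_*(\beta)$ in $H^1_\mathrm{dR}(U_{z_A},\mathcal{V}_{n,n})$. Applying $\mathrm{Res}_{z_A}$ and using that $j_*$ and $\mathrm{Res}_{z_A}$ are consecutive arrows in the Gysin sequence \eqref{5.13IS}, so that $\mathrm{Res}_{z_A}\circ j_*=0$ by exactness, I would get $\mathrm{Res}_{z_A}(\chi)=\mathrm{Res}_{z_A}(\alpha)$. The class $\alpha$ was chosen in \eqref{res and class} precisely so that $\mathrm{Res}_{z_A}(\alpha)=\mathrm{cl}_A(\Delta_\varphi)$; hence $\mathrm{Res}_{z_A}(\chi)=\mathrm{cl}_A(\Delta_\varphi)$. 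Since the fiber of $X_m$ over $z_A$ is $A^m\times A_0^m$, the element $\mathrm{cl}_A(\Delta_\varphi)$ of the stalk $(\mathcal{V}_n)_{z_A}\otimes S_n\simeq V_n\otimes S_n$ is exactly $\mathrm{cl}_{z_A}(\Delta_\varphi)$, so the two cycle classes coincide.

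Substituting this into the displayed chain and recalling that $F_{\omega_f}$ is, under the identification of Lemma \ref{AJprop1}, the Coleman primitive $F_f$ satisfying $\nabla(F_f)=\omega_f$, would then yield
\[
\AJ_p(\Delta_\varphi)(\omega_f\otimes\omega_{A_0}^{j}\eta_{A_0}^{n-j})
=\langle F_f(z_A)\otimes\omega_{A_0}^{j}\eta_{A_0}^{n-j},\mathrm{cl}_{z_A}(\Delta_\varphi)\rangle_{A\times A_0},
\]
which is the assertion. I expect the main difficulty to lie not in this formal assembly but in maintaining consistency among the many identifications invoked: that the slope decomposition lets us take $\beta\in H^1_\mathrm{dR}(\mathcal{C}^\rig,\mathcal{V}_{n,n})_n$ with $\beta=\iota(c)$; that the residue map of \eqref{5.13IS} agrees, up to the shift $[-n]$ versus $[1]$, with the one occurring in \eqref{diagramAJ}; and that the Poincar\'e pairing $\langle,\rangle_{n,n}$ on $\mathcal{C}^\rig$ is compatible with the fiberwise pairing $\langle,\rangle_{A\times A_0}$ used on the right. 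These compatibilities are exactly what Lemmas \ref{lemma5.2} and \ref{lemma5.3} encode, so the present proof amounts to their careful combination.
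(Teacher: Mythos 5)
Your proof is correct and follows essentially the same route as the paper: chaining \eqref{first equality AJ}, Lemma \ref{lemma5.2} and Lemma \ref{lemma5.3}, then invoking \eqref{res and class} to identify the residue of $\chi$ with the cycle class. The only difference is that you spell out explicitly the step the paper leaves implicit, namely that $\mathrm{Res}_{z_A}(\chi)=\mathrm{Res}_{z_A}(\alpha)$ because $\mathrm{Res}_{z_A}\circ j_*=0$ by exactness of the Gysin sequence \eqref{5.13IS}, which is a worthwhile clarification but not a different argument.
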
 

\begin{proof} Taking into account \eqref{res and class} and \eqref{first equality AJ}, this follows from combining Lemma \ref{lemma5.2} and Lemma \ref{lemma5.3}. 
\end{proof}

\begin{lemma}\label{lemma9.3} $\langle F_f(z_A)\otimes\omega_{A_0}^{j}\eta_{A_0}^{n-j},\mathrm{cl}_{z_A}(\Delta_\varphi)\rangle=\langle\varphi^*(F_f(z_A)),\omega_{A_0}^{j}\eta_{A_0}^{n-j}\rangle_{A_0}$.
\end{lemma}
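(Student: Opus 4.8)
The plan is to identify the de Rham cycle class $\mathrm{cl}_{z_A}(\Delta_\varphi)$, viewed inside the K\"unneth component
\[
\mathrm{Sym}^n eH^1_\mathrm{dR}(A)\otimes\mathrm{Sym}^n eH^1_\mathrm{dR}(A_0)
\]
of $\epsilon H^{2n}_\mathrm{dR}(X_{m,z_A}/F_v)$, with the correspondence $\mathrm{Sym}^n(\varphi^*)$. Here $\langle,\rangle_{A\times A_0}$ is the tensor product of the Poincar\'e pairings $\langle,\rangle_A$ and $\langle,\rangle_{A_0}$ on the two factors, each of which is self-dual under its Poincar\'e pairing. Under this self-duality on the first factor, $\mathrm{cl}_{z_A}(\Delta_\varphi)$ corresponds to a homomorphism $\mathrm{Sym}^n eH^1_\mathrm{dR}(A)\to\mathrm{Sym}^n eH^1_\mathrm{dR}(A_0)$, and the point is to show that this homomorphism is $\mathrm{Sym}^n(\varphi^*)$. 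Granting this, for all $\alpha\in\mathrm{Sym}^n eH^1_\mathrm{dR}(A)$ and $\beta\in\mathrm{Sym}^n eH^1_\mathrm{dR}(A_0)$ one has $\langle\alpha\otimes\beta,\mathrm{cl}_{z_A}(\Delta_\varphi)\rangle_{A\times A_0}=\langle\mathrm{Sym}^n(\varphi^*)(\alpha),\beta\rangle_{A_0}$, and the Lemma follows on taking $\alpha=F_f(z_A)$, $\beta=\omega_{A_0}^{j}\eta_{A_0}^{n-j}$ and abbreviating $\mathrm{Sym}^n(\varphi^*)$ by $\varphi^*$ as in the statement.

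First I would treat the single graph $\Gamma_\varphi\subset A\times A_0$, which is the image of the closed immersion $\gamma=(\varphi,\mathrm{id}_{A_0})\colon A_0\hookrightarrow A\times A_0$. The correspondence attached to $\Gamma_\varphi$ acts on cohomology by $\alpha\mapsto\mathrm{pr}_{A_0,*}\bigl([\Gamma_\varphi]\cup\mathrm{pr}_A^*\alpha\bigr)$; since $\mathrm{pr}_A\circ\gamma=\varphi$ and $\mathrm{pr}_{A_0}\circ\gamma=\mathrm{id}_{A_0}$, the projection formula gives
\[
\mathrm{pr}_{A_0,*}\bigl([\Gamma_\varphi]\cup\mathrm{pr}_A^*\alpha\bigr)=\mathrm{pr}_{A_0,*}\gamma_*\gamma^*\mathrm{pr}_A^*\alpha=\varphi^*\alpha,
\]
so $\Gamma_\varphi$ induces $\varphi^*$ on $H^1_\mathrm{dR}$. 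Because $\varphi$ is an isogeny of false elliptic curves it commutes with the quaternionic action, hence $\varphi^*$ commutes with the idempotent $e$ and restricts to $\varphi^*\colon eH^1_\mathrm{dR}(A)\to eH^1_\mathrm{dR}(A_0)$. Translating this into the Poincar\'e pairing, the component of $[\Gamma_\varphi]$ in $eH^1_\mathrm{dR}(A)\otimes eH^1_\mathrm{dR}(A_0)$ satisfies $\langle u\otimes w,[\Gamma_\varphi]\rangle_{A\times A_0}=\langle\varphi^*u,w\rangle_{A_0}$ for all $u\in eH^1_\mathrm{dR}(A)$ and $w\in eH^1_\mathrm{dR}(A_0)$.

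Next I would pass to $\Upsilon_\varphi=(\Gamma_\varphi)^m$. After the canonical reordering $(A\times A_0)^m\simeq A^m\times A_0^m$, the cycle $\Upsilon_\varphi$ is the graph of $\varphi^{\times m}$, so its class is the $m$-fold external product $[\Gamma_\varphi]^{\boxtimes m}$. Applying $\epsilon=\epsilon_\mathcal{A}\epsilon_{A_0}$, which by its construction in \cite[Corollary 6.3, Proposition 6.4]{Brooks} acts factorwise through $e$ and projects onto the symmetric part, carries $[\Gamma_\varphi]^{\boxtimes m}$ to the class in $\mathrm{Sym}^n eH^1_\mathrm{dR}(A)\otimes\mathrm{Sym}^n eH^1_\mathrm{dR}(A_0)$ representing $\mathrm{Sym}^n(\varphi^*)$, using that the symmetric power of $\varphi^*$ is compatible with the induced Poincar\'e pairings on $\mathrm{Sym}^n$. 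This establishes the claim of the first paragraph and hence the Lemma.

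The conceptual content is entirely the single-graph computation of the second paragraph, which is the standard identity expressing the graph of a morphism as the correspondence $\varphi^*$ (compare \cite{BDP}, \cite{Brooks}). The two points that will need care are the compatibility of the de Rham cycle class $\mathrm{cl}_{z_A}$---defined here through Fontaine's functor $D_{\mathrm{st},F_v}$ and the comparison isomorphism with de Rham cohomology---with cup products, pushforwards and the Poincar\'e pairing, and the precise matching of Brooks' projector $\epsilon$ with the symmetrization needed to obtain $\mathrm{Sym}^n(\varphi^*)$ rather than merely $(\varphi^*)^{\otimes m}$. Both are bookkeeping: cycle classes are compatible with the comparison isomorphisms and with the standard cohomological operations, so no genuinely new difficulty arises.
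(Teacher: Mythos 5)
Your proposal is correct and is essentially the paper's own argument: the paper's proof of this lemma is a one-line appeal to ``the functoriality property of the Poincar\'e pairing, as in \cite[Proposition 3.21]{BDP}'', and what you have written out (the graph correspondence $\Gamma_\varphi$ acting as $\varphi^*$ via the projection formula, extended to the $m$-fold product and symmetrized through the projector $\epsilon$) is precisely the content of that citation. No gap; you have simply made explicit the bookkeeping the paper delegates to \cite{BDP}.
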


\begin{proof}
This follows from \ref{lemma9.2} and the functoriality property of the Poincar\'e pairing, as in \cite[Proposition 3.21]{BDP}. 
\end{proof}

Let $\omega_A\in eH^1_\mathrm{dR}(A/F)$ be such that 
$\omega_{A_0} =\varphi^*\omega_{A}$ and let $\eta_A\in eH^1(A,\mathcal{O}_{A})$ be as before the dual class to $\omega_{A}$ under the Poincar\'e duality pairing $\langle\ ,\ \rangle_{A}$, normalised so that 
$\langle\omega_{A},\eta_{A}\rangle_{A}=1$. 

\begin{proposition}\label{proposition9.4}
$
\AJ_p(\Delta_\varphi)(\omega_f\otimes\omega_{A_0}^{j}\eta_{A_0}^{n-j})=d_\varphi^{j}\langle F_f(z_A),\omega_{A}^{j}\eta_{A}^{n-j}\rangle_A
$, where $d_\varphi$ is the degree of $\varphi$. 
\end{proposition}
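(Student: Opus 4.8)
The plan is to reduce the statement to the two preceding lemmas and then to carry out a purely linear-algebraic computation in $\mathrm{Sym}^n eH^1_\mathrm{dR}$ that records how the classes $\omega$ and $\eta$ transform under the isogeny $\varphi$. First I would combine Lemma \ref{lemma9.2} and Lemma \ref{lemma9.3} to obtain
\[\AJ_p(\Delta_\varphi)(\omega_f\otimes\omega_{A_0}^{j}\eta_{A_0}^{n-j})=\langle\varphi^*(F_f(z_A)),\omega_{A_0}^{j}\eta_{A_0}^{n-j}\rangle_{A_0},\]
so that everything is reduced to evaluating this last Poincar\'e pairing on $A_0$.

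Next, the key input is the behaviour of the basis $\{\omega,\eta\}$ of $eH^1_\mathrm{dR}$ under $\varphi^*$. By hypothesis $\varphi^*\omega_A=\omega_{A_0}$. To pin down $\varphi^*\eta_A$ I would use two facts: (i) the functoriality of the Poincar\'e pairing under an isogeny, $\langle\varphi^*x,\varphi^*y\rangle_{A_0}=d_\varphi\langle x,y\rangle_A$ (equivalently the adjunction $\langle\varphi^*x,y\rangle_{A_0}=\langle x,\varphi_*y\rangle_A$ together with $\varphi_*\varphi^*=d_\varphi$); and (ii) the compatibility of $\varphi^*$ with the CM (unit-root) splitting of the Hodge filtration on both curves. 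From (i), $\langle\omega_{A_0},\varphi^*\eta_A\rangle_{A_0}=\langle\varphi^*\omega_A,\varphi^*\eta_A\rangle_{A_0}=d_\varphi$; from (ii), $\varphi^*\eta_A$ lies in $eH^1(A_0,\mathcal{O}_{A_0})=\langle\eta_{A_0}\rangle$, because $\varphi$ is $K$-equivariant for the CM structure induced on $A$ and hence $\varphi^*$ preserves the eigenspace decomposition. Writing $\varphi^*\eta_A=a\,\omega_{A_0}+b\,\eta_{A_0}$, these give $a=0$ and $b=d_\varphi$, that is $\varphi^*\eta_A=d_\varphi\,\eta_{A_0}$.

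Finally I would substitute. Using $\omega_{A_0}=\varphi^*\omega_A$ and $\eta_{A_0}=d_\varphi^{-1}\varphi^*\eta_A$ one rewrites
\[\omega_{A_0}^{j}\eta_{A_0}^{n-j}=d_\varphi^{-(n-j)}\,\mathrm{Sym}^n\varphi^*\bigl(\omega_A^{j}\eta_A^{n-j}\bigr),\]
and then applies the $\mathrm{Sym}^n$-version of (i), namely $\langle\mathrm{Sym}^n\varphi^*u,\mathrm{Sym}^n\varphi^*v\rangle_{A_0}=d_\varphi^{\,n}\langle u,v\rangle_A$, each of the $n$ symmetric factors contributing one power of $d_\varphi$. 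Since $\varphi^*F_f(z_A)=\mathrm{Sym}^n\varphi^*(F_f(z_A))$, this yields
\[\langle\varphi^*F_f(z_A),\omega_{A_0}^{j}\eta_{A_0}^{n-j}\rangle_{A_0}=d_\varphi^{-(n-j)}\cdot d_\varphi^{\,n}\,\langle F_f(z_A),\omega_A^{j}\eta_A^{n-j}\rangle_A=d_\varphi^{\,j}\,\langle F_f(z_A),\omega_A^{j}\eta_A^{n-j}\rangle_A,\]
which is the asserted identity.

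The step I expect to be the main obstacle is establishing $\varphi^*\eta_A=d_\varphi\,\eta_{A_0}$, i.e. the vanishing of the $\omega_{A_0}$-component $a$. This is precisely where the CM hypothesis on $A_0$ enters: one must argue that $\varphi^*$ respects the canonical unit-root/CM splitting $eH^1_\mathrm{dR}=e\Omega^1\oplus eH^1(\mathcal{O})$ on both curves, so that the anti-holomorphic class $\eta_A$ pulls back into the anti-holomorphic line of $A_0$. This is the analogue of the corresponding point in \cite[Proposition 3.21]{BDP}; once it is in place, the remainder is only bookkeeping with the degree factors in the symmetric powers.
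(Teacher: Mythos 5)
Your proof is correct and follows essentially the same route as the paper: reduce via Lemmas \ref{lemma9.2} and \ref{lemma9.3} to the pairing $\langle\varphi^*(F_f(z_A)),\omega_{A_0}^{j}\eta_{A_0}^{n-j}\rangle_{A_0}$, then use $\varphi^*\omega_A=\omega_{A_0}$, $\varphi^*\eta_A=d_\varphi\,\eta_{A_0}$ and the functoriality $\langle\varphi^*x,\varphi^*y\rangle_{A_0}=d_\varphi\langle x,y\rangle_A$ to collect the factor $d_\varphi^{j-n}\cdot d_\varphi^{n}=d_\varphi^{j}$. The only difference is that you actually justify the identity $\varphi^*\eta_A=d_\varphi\,\eta_{A_0}$ (via the pairing normalisation and the compatibility of $\varphi^*$ with the CM splitting), whereas the paper simply asserts it with ``Observe that''.
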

\begin{proof}
Observe that $\varphi^*\eta_{A}=d_\varphi\eta_{A_0}$. 
It follows that 
\begin{equation}\label{eq45}\langle\varphi^*(F_f(z_A)),\omega_{A_0}^{j}\eta_{A_0}^{n-j}\rangle_{A_0} =d_\varphi^{j-n}\langle\varphi^*(F_f(z_A)),\varphi^*(\omega_{A}^{j}\eta_{A}^{n-j})\rangle_{A_0}.\end{equation}
The functoriality properties of Poincar\'e pairing show that 
$\langle\varphi^*\omega,\varphi^*\eta\rangle_{A_0}=d_\varphi\langle\omega,\eta\rangle_{A}$ for 
$\omega$ and $\eta$ in $H^1_\mathrm{dR}(A/F)$. 
It follows that 
\begin{equation}\label{eq46}\langle\varphi^*(F_f(z_A)),\varphi^*(\omega_{A}^{j}\eta_{A}^{n-j})\rangle_{A_0}= d_\varphi^n\langle F_f(z_A),\omega_{A}^{j}\eta_{A}^{n-j}\rangle_A.
\end{equation}
The result follows combining Lemma \ref{lemma9.2} and Lemma \ref{lemma9.3} with equations \eqref{eq45} and \eqref{eq46}. \end{proof}

Recall the canonical differentials $\omega_\can$, $\eta_\can$ introduced in Section \ref{section Coleman primitive}. Since $A_0$ and $A$ have CM by $\mathcal{O}_K$, $\omega_\can$ and $\eta_\can$ defined pair of differentials $\omega_{A_0}$, $\eta_{A_0}$ and $\omega_{A}$, $\eta_A$. 
Recall now the definition of the function $G_j$ in \eqref{def G_j}, and for integers $j=n/2,\dots,n$ define 
the function \[H_j(z)=\langle F_{f}(z),\omega_\mathrm{can}^j(z)\eta_\mathrm{can}^{n-j}(z)\rangle.\]

\begin{theorem}\label{AJ theorem} Let $\omega_{A_0}$, $\eta_{A_0}$, $\omega_A$ and $\eta_A$ be defined by means of $\omega_\can$ and $\eta_\can$. Then for each $j=n/2,\dots,n$ we have 
\[H_j(z_A)=\AJ(\Delta_\varphi)(\omega_f\otimes\omega_{A_0}^{j}\eta_{A_0}^{n-j}).\] 
\end{theorem}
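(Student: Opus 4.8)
The plan is to deduce Theorem~\ref{AJ theorem} from Proposition~\ref{proposition9.4}, which already computes
\[
\AJ_p(\Delta_\varphi)(\omega_f\otimes\omega_{A_0}^{j}\eta_{A_0}^{n-j})=d_\varphi^{j}\langle F_f(z_A),\omega_{A}^{j}\eta_{A}^{n-j}\rangle_A,
\]
so that the only remaining task is to rewrite the right-hand side as the value $H_j(z_A)$ of the canonical function. First I would make precise the clause ``defined by means of $\omega_\can$ and $\eta_\can$'': under the canonical identification of the stalk of $\mathcal{L}_\mathcal{G}^0$ at the CM point $z_A$ with $eH^1_\mathrm{dR}(A/F_v)$ furnished by Proposition~\ref{identifications} and the uniformization~\eqref{Rigid CD SFD}, the global sections $\omega_\can=d\tau$ and $\eta_\can=d\tau^\ast/(z^\ast-z)$ of Section~\ref{section Coleman primitive} specialize to the basis $\omega_A:=\omega_\can(z_A)$, $\eta_A:=\eta_\can(z_A)$, and at $z_{A_0}$ to $\omega_{A_0}:=\omega_\can(z_{A_0})$, $\eta_{A_0}:=\eta_\can(z_{A_0})$. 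Because $\langle\omega_\can,\eta_\can\rangle=1$ (as $\langle dx,dy\rangle=-1$), these bases are automatically normalized, $\langle\omega_A,\eta_A\rangle_A=\langle\omega_{A_0},\eta_{A_0}\rangle_{A_0}=1$, and the defining formula of $H_j$ then reads $H_j(z_A)=\langle F_f(z_A),\omega_A^{j}\eta_A^{n-j}\rangle_A$ verbatim.

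The second step is to check that these canonical specializations satisfy the $\varphi$-compatibility underlying Proposition~\ref{proposition9.4} and that the normalizations remain consistent. By functoriality of the canonical differentials the isogeny acts on the Hodge-filtered piece by $\varphi^*\omega_A=\omega_{A_0}$ and, dually, on the unit-root quotient by $\varphi^*\eta_A=d_\varphi\eta_{A_0}$; I would justify both from the relation $\langle\varphi^*x,\varphi^*y\rangle_{A_0}=d_\varphi\langle x,y\rangle_A$ for the Poincar\'e pairing together with the normalizations above, exactly as in the derivation of~\eqref{eq45} and~\eqref{eq46}. Substituting these relations into Proposition~\ref{proposition9.4}, and matching the power of $d_\varphi$ produced by the $n$-fold Poincar\'e pairing against the power produced by $\omega_{A_0}^{j}\eta_{A_0}^{n-j}=d_\varphi^{j-n}\varphi^*(\omega_A^{j}\eta_A^{n-j})$, should reduce the right-hand side to $\langle F_f(z_A),\omega_A^{j}\eta_A^{n-j}\rangle_A=H_j(z_A)$, which is the assertion. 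The computations of Lemmas~\ref{lemma9.2} and~\ref{lemma9.3} are reused unchanged, the only new input being the choice of basis.

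The step I expect to be the main obstacle is precisely the bookkeeping of the normalizations and of the degree $d_\varphi$: one must ensure that the canonical differentials are used coherently at both CM points $z_{A_0}$ and $z_A$, that the identification of the stalk of $\mathcal{L}_\mathcal{G}^0$ with $eH^1_\mathrm{dR}(A/F_v)$ is compatible both with the pairing $\langle,\rangle$ and with $\varphi^*$, and that the powers of $d_\varphi$ attached to the exponents $j$ and $n-j$ are tracked correctly through the $n$-fold symmetric pairing. Everything else is a direct specialization of the Gysin-sequence computation already carried out, so the heart of the argument is this careful matching of factors at the CM point.
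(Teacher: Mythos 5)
Your overall route is exactly the paper's: identify $\omega_A=\omega_\can(z_A)$, $\eta_A=\eta_\can(z_A)$ (and similarly at $z_{A_0}$) so that $H_j(z_A)=\langle F_f(z_A),\omega_A^{j}\eta_A^{n-j}\rangle_A$, check the normalizations $\langle\omega_{A_0},\eta_{A_0}\rangle_{A_0}=\langle\omega_A,\eta_A\rangle_A=1$ and the compatibility $\varphi^*\omega_A=\omega_{A_0}$ (hence $\varphi^*\eta_A=d_\varphi\eta_{A_0}$), and then invoke Proposition \ref{proposition9.4}. The paper's proof is this same argument, compressed into two lines.

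There is, however, a concrete problem in your final step, and it is worth naming because it is the only nontrivial content of the deduction. After substituting $\omega_{A_0}^{j}\eta_{A_0}^{n-j}=d_\varphi^{j-n}\varphi^*\bigl(\omega_A^{j}\eta_A^{n-j}\bigr)$ and applying $\langle\varphi^*x,\varphi^*y\rangle_{A_0}=d_\varphi\langle x,y\rangle_A$ through the $n$-fold pairing, the powers of $d_\varphi$ do \emph{not} cancel: they combine as $d_\varphi^{j-n}\cdot d_\varphi^{n}=d_\varphi^{j}$, which is precisely the factor recorded on the right-hand side of Proposition \ref{proposition9.4}. So what your computation actually establishes is
\[
\AJ_p(\Delta_\varphi)\bigl(\omega_f\otimes\omega_{A_0}^{j}\eta_{A_0}^{n-j}\bigr)=d_\varphi^{j}\,\langle F_f(z_A),\omega_A^{j}\eta_A^{n-j}\rangle_A=d_\varphi^{j}H_j(z_A),
\]
and the claimed reduction to $H_j(z_A)$ is unjustified unless $d_\varphi=1$. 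To be fair, the paper's own proof has the identical defect: it asserts the three compatibility conditions, cites Proposition \ref{proposition9.4} (whose conclusion carries $d_\varphi^{j}$), and says the result follows, with no account of where that factor goes; so your proposal reproduces the paper's argument, including its unresolved degree factor. A genuine proof of the equality as stated would require either restricting to $d_\varphi=1$, rescaling one pair of differentials by a power of $d_\varphi$ (which is incompatible with having both pairs given by the unit-normalized $\omega_\can,\eta_\can$), or restating the theorem with the factor $d_\varphi^{j}$. A secondary, smaller point: your justification of $\varphi^*\omega_A=\omega_{A_0}$ from the pairing identity and the normalizations is circular, since the pairing identity determines $\varphi^*\omega_A$ only up to scalar; this compatibility is a genuine property of the canonical differentials under isogenies of CM false elliptic curves, which the paper likewise asserts without proof.
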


\begin{proof}
The differentials on $A_0$ and $A$ thus defined 
satisfy the conditions $\langle\omega_{A_0},\eta_{A_0}\rangle_{A_0}=1$, $\langle\omega_{A},\eta_A\rangle_A=1$ and $\varphi^*\omega_A=\omega_{A_0}$. Therefore we may 
apply Proposition \ref{proposition9.4} and the result follows. 
\end{proof}

\begin{corollary}\label{coro9.6}
Let $\omega_{A_0}$, $\eta_{A_0}$, $\omega_A$ and $\eta_A$ be defined by means of $\omega_\can$ and $\eta_\can$. Then for each $j=n/2,\dots,n$ we have 
\[\delta^{n-j}_p\left(H_n\right)(z_A)=\frac{n!}{j!}\AJ(\Delta_\varphi)(\omega_f\wedge \omega_{A_0}^{j}\eta_{A_0}^{n-j}).\] 
\end{corollary}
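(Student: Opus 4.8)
The plan is to combine the recursion for $\Theta_p$ proved inside Theorem~\ref{lemma8.1} with the evaluation formula of Theorem~\ref{AJ theorem}, the bridge being the tautology $G_i(z)=H_i(z)\otimes\omega_\can^{\otimes(n-2i)}$ coming from the definition \eqref{def G_j} together with the explicit shape \eqref{Shimura-Maass explicit formula iterates} of the iterated operator. First I would record the recursion: the proof of Theorem~\ref{lemma8.1} shows that $\Theta_p(G_i)=iG_{i-1}$ for all $i\geq 1$, so iterating $n-j$ times starting from $G_n$ yields
\[
\Theta_p^{\,n-j}(G_n)=n(n-1)\cdots(j+1)\,G_j=\frac{n!}{j!}\,G_j .
\]

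Next I would pass from the $\omega_\can$-valued forms $G_i$ to the scalar functions $H_i$. Writing $G_n=H_n\otimes\omega_\can^{\otimes(-n)}$ and applying \eqref{Shimura-Maass explicit formula iterates} (with $\omega_\can=d\tau$ and weight $k=-n$), the left-hand side above becomes $\delta_p^{\,n-j}(H_n)\otimes\omega_\can^{\otimes(n-2j)}$, up to the fixed period power; comparing the coefficients of $\omega_\can^{\otimes(n-2j)}$ with $\tfrac{n!}{j!}G_j=\tfrac{n!}{j!}H_j\otimes\omega_\can^{\otimes(n-2j)}$ gives
\[
\delta_p^{\,n-j}(H_n)=\frac{n!}{j!}\,H_j .
\]
Finally I would specialize at the CM point $z_A$ and invoke Theorem~\ref{AJ theorem}, which identifies $H_j(z_A)$ with $\AJ_p(\Delta_\varphi)(\omega_f\otimes\omega_{A_0}^{j}\eta_{A_0}^{n-j})$ precisely because $\omega_{A_0},\eta_{A_0}$ are taken to be the differentials determined by $\omega_\can,\eta_\can$; substituting yields the stated equality.

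The hard part will be the bookkeeping of normalizations rather than any new computation. Two issues deserve care. First, the period $t_p$ relating $\Theta_p$ to $\delta_{p,k}$ in \eqref{Shimura-Maass explicit formula} accumulates a factor $t_p^{\,n-j}$ over the $n-j$ iterations, so one must check that the $\delta_p$ of the statement is normalized to absorb it (equivalently, that this is the same $t_p$ entering Theorem~\ref{AJ theorem}). Second, $G_n$ carries the negative weight $-n$, so one should verify that the construction of $\Theta_{p,k}^\ast$ and formula \eqref{Shimura-Maass explicit formula iterates} remain valid for $k<0$; this is harmless because $\omega_\can=d\tau$ is a nowhere-vanishing generator of the invertible sheaf $\underline\omega_{\mathcal{G}}^0$ and the Kodaira--Spencer map is the isomorphism computed in \eqref{formula 2}. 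One must also confirm that $\omega_\can,\eta_\can$ specialize at $z_A$ to $\omega_{A_0},\eta_{A_0}$ compatibly with the Poincar\'e pairings, which is exactly the hypothesis under which Theorem~\ref{AJ theorem} is phrased.
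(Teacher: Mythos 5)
Your proposal is correct and follows essentially the same route as the paper's own proof: extract the recursion $\Theta_p(G_i)=iG_{i-1}$ from the proof of Theorem \ref{lemma8.1}, iterate to obtain $\Theta_p^{n-j}(G_n)=\frac{n!}{j!}G_j$, translate this into $\delta_p^{n-j}(H_n)=\frac{n!}{j!}H_j$, and conclude by Theorem \ref{AJ theorem}. The details you add --- the identification $G_i=H_i\otimes\omega_\can^{\otimes(n-2i)}$ justified via \eqref{Shimura-Maass explicit formula iterates}, and the caveat about the $t_p$-normalization accumulating over the $n-j$ iterations --- are precisely the steps the paper's very terse proof leaves implicit, and your handling of them is sound.
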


\begin{proof}
From the proof of Theorem \ref{lemma8.1} we see that  
$\Theta_p(G_j(z))=jG_{j-1}(z)$, and therefore we have  
$\Theta_p^{n-j}(G_n(z))= \frac{n!}{j!}G_{j}(z)$. Therefore, 
$\delta_p^{n-j}(H_n(z))= \frac{n!}{j!}H_{j}(z)$
The result follows from Theorem \ref{AJ theorem}. 
\end{proof}

\bibliographystyle{amsalpha} 
\bibliography{references}
\end{document}